\documentclass[10pt,a4paper]{article}
\usepackage[latin1]{inputenc}
\usepackage{amsfonts,amssymb,latexsym}
\usepackage{amsmath,amsthm}
\usepackage[Sonny]{fncychap}
\usepackage{enumerate}
\usepackage[dvips]{color,graphicx} 
\usepackage{pst-all}
\usepackage{pstcol}
\usepackage{amsbsy,amssymb}
\usepackage{amsmath}
\usepackage{amsfonts}
\usepackage{amssymb}
\usepackage{graphicx}
\usepackage{amscd}
\usepackage{amsmath, upgreek}
\usepackage{graphicx}
\usepackage{amsfonts}
\usepackage{amssymb, wasysym}
\usepackage{amssymb}
 \usepackage{pst-all}
  \usepackage{nonfloat}
   \usepackage[bf,footnotesize]{caption2}
   \usepackage{soul} 
\usepackage{cancel}

\setcounter{MaxMatrixCols}{10}

\addtolength{\textwidth}{4cm} \addtolength{\evensidemargin}{-2.0cm}
\addtolength{\oddsidemargin}{-1.5cm} \addtolength{\textheight}{1cm}
\addtolength{\topmargin}{-0.8cm}
\newtheorem{theorem}{Theorem}[section]
\newtheorem{lemma}[theorem]{Lemma}

\newtheorem{definition}[theorem]{Definition}
\newtheorem{remark}[theorem]{Remark}

\numberwithin{equation}{section}

\newcommand{\CFF}{\textbf{\textit{F}}}
\newcommand{\CF}{\mathcal{F}}
\newcommand{\CJ}{\mathcal{J}}

\newcommand{\CS}{\mathcal{S}}

\begin{document}

\title{A Boundary Control Problem Associated to the Rayleigh-Bénard-Marangoni System}
\author{{E.J. Villamizar-Roa}{\thanks{Corresponding author: jvillami@uis.edu.co. }\ \ and D.A. Rueda-G\'omez} \\
{\small Universidad Industrial de Santander, Escuela de Matem\'{a}ticas}\\
{\small A.A. 678, Bucaramanga, Colombia.} \\
}
\date{\empty}
\maketitle
\begin{abstract}
In this paper, we study a boundary control problem associated to the
stationary Rayleigh-B\'enard-Marangoni (RBM) system in presence of
controls for the velocity and the temperature on parts of the
boundary. We analyze the existence, uniqueness and regularity of
weak solutions for the stationary RBM system in polyhedral domains
of $\mathbb{R}^3,$ and then, we prove the existence of the optimal
solution. By using the Theorem of Lagrange multipliers, we derive an
optimality system. We also give a second-order sufficient optimality
condition and we establish a result of uniqueness of the optimal
solution.\bigskip

\textbf{AMS subject classification: }35Q55, 35A01, 35A02, 35C06.

\medskip\textbf{Keywords:} Rayleigh-B\'enard-Marangoni System,
Boundary control problems, Optimality conditions.
\end{abstract}

\pagestyle{myheadings} \markright{On the Rayleigh-B\'enard-Marangoni system}
\section{Introduction} \label{introd}

\hspace{0.5cm}Fluid movement by temperature gradients, also called thermal
convection, is an important process in nature. Its main applications
appear in industry, as for instance, in the growth of semiconductor
crystals, but also,  thermal convection is the basis for the
interpretation of several natural phenomena such as the movement of
the earth's plates, the solar activity, large scale circulations of
the oceans, movement in the atmosphere, among others. A model of
particular interest consists of a horizontal layer of a fluid in a
container heated uniformly from below, with the bottom surface and
the lateral walls rigid and the upper surface open to the
atmosphere. Due to heating, the fluid in the bottom surface expands
and it becomes lighter than the fluid in the upper surface, so that,
by effect of the buoyancy, the liquid is potentially unstable.
Because of the instability, the fluid tends to redistribute.
However, this natural tendency will be controlled by its own
viscosity. On the other hand, the upper surface, which is free to
the atmosphere, experiences changes in its surface tension as a
result of the temperature gradients in the surface. Then, it is
expected that
the temperature gradient exceeds a critical value, above which the instability can manifest.\\

The first experiments to demonstrate the beginning of
Thermal instability in fluids were developed by
Henri B\'enard in 1900 (see \cite{Benard}). In his experiments, B\'enard considered
a very thin layer of liquid, around $1\: mm$  of depth,
in a metal plate maintained at a constant temperature. The upper surface was usually free and it was in contact with the air, which
was at a lower temperature. B\'enard experimented with a variety of
liquids with different physical characteristics, mainly interested
in the effect of viscosity on the convection, using liquids of
high viscosity like wax whale melted and paraffin. In all
these cases, B\'enard found that when the temperature of the plate
gradually increased, at a certain moment, the layer lost
stability and formed patterns of hexagonal cells, all alike
and correctly aligned.\\


A first theoretical interpretation of thermal convection was provided
by Lord Rayleigh in 1916 (see \cite{Rayleigh}), whose analysis was inspired
by B\'enard's experiment. Rayleigh assumed that the fluid was confined between two horizontal thermally conductive plates and the fluid
was being heated from below. Rayleigh considered that
the effect of buoyancy is the only one responsible of the beginning of the
instability, and theoretically, the results coincided with the
reported by B\'enard, giving the impression that his model was correct. However, it is known now that Rayleigh's theory is not
adequate for explaining the convective mechanism observed by B\'enard.
In fact, in B\'enard's experiments, the free surface was
in contact with the atmosphere which generates a surface tension,
and Rayleigh, using a plate in the upper surface, eliminated the surface tension's effects.\\

It should be noted that the surface tension is not constant and
it may depend on the temperature or contaminants in the
surface. This dependence is called {\it capillarity} or
{\it Marangoni effect} \cite{lappa,Marangoni}. The importance
of the Marangoni effect in B\'enard's experiments was established by Block in
\cite{Block} from an experimental point of view, and by
Pearson \cite{Pearson} from a theoretical point of view. Now is recognized that the Marangoni effect is
the main cause of instability and convection in the
B\'enard original experiments.\\

For the foregoing reasons, we consider the physical situation of a horizontal layer of a fluid in a cubic container of height $d$ ($x_{3}$-coordinate), of length $L_1$ ($x_{2}$-coordinate) and width $l_1$ ($x_{1}$-coordinate). The bottom surface of the container and the lateral walls are rigid and the upper surface is open to the atmosphere. In order to describe the system, we use the Oberbeck-Boussinesq approximation \cite{Chandra}, which assumes that the thermodynamical coefficients are constant, except in the case of the density in the buoyancy term, which is considered as being $\rho_{0}\left[1-\alpha\left(\theta-\theta_{a}\right)\right]$. Here $\rho_{0}$ is the mean density, $\theta_{a}$ is the temperature of the environment and $\alpha$ is the thermal expansion coefficient, which is positive for most liquids. Moreover, we assume that the surface tension is a function of the temperature, and it is approximated by $\sigma=\sigma_{0}-\gamma\left(\theta-\theta_{a}\right)$. Here, $\sigma_{0}$ is the surface tension at temperature $\theta_{a}$, and $\gamma$ is the ratio of change of the surface tension with the temperature ($\gamma$ is positive for the more commonly used liquids). Also, the
free surface is presumed not to be distorted, that is, the vertical component of the velocity in the free surface
always will be zero. Then, we consider that the domain, in which the fluid is confined, is given by $\Omega=\left(0,\,
l_{1}\right)\times\left(0,\, L_{1}\right)\times\left(0,\, d\right)$. However, the analysis developed in this paper allows us to consider a domain $\Omega$ with more general geometries, specifically, we can consider $\Omega=\hat{\Omega}\times (0,d)$, being $\hat{\Omega}$ a Lipschitz bounded domain of $\mathbb{R}^{2}.$\\


In stationary regime, the RBM system
is given by the following coupling between the Navier-Stokes equations and heat equation:
\begin{equation}\label{modnoest1}
\left\{
\begin{array}
[c]{rcl}%
\rho_{0}\left(\textbf{u}\cdot\nabla\right) \textbf{u}+\nabla p-\mu\Delta\textbf{u}=\rho_{0}\left[1-\alpha\left(\theta-\theta_{a}\right)\right]\vec{g}\; \text{ in }\; \Omega,\\
\rho_{0}\hat{C}_{p}\left(\textbf{u}\cdot\nabla \right)\theta=K\Delta \theta \;\text{ in }\; \Omega,\\
\text{div }\textbf{u}=0  \;\text{ in }\; \Omega,
\end{array}
\right.
\end{equation}
where the unknowns are
${\textbf{u}(\textbf{x})=(u_{1}(\textbf{x}),u_{2}(\textbf{x}),u_{3}(\textbf{x}))}\in
\mathbb{R}^{3}$, $\theta(\textbf{x})\in \mathbb{R}$ and
$p(\textbf{x})\in \mathbb{R},$  which represent the velocity field,
the temperature and the hydrostatic pressure of the fluid,
respectively, at a point $\textbf{x}\in \Omega.$ The constant
$\hat{C}_{p}$ is the heat capacity per unit mass of the fluid, $\mu$
its viscosity, $K$ its thermal conductivity and the field $\vec{g}$
is the acceleration due to gravity.\\

In order to express the system in adimensional form, we make the following
changes of variables:
\begin{eqnarray*}
x'_{1}=\frac{x_{1}}{d},\ x'_{2}=\frac{x_{2}}{d},\ x'_{3}=\frac{x_{3}}{d},\ u'_{1}=\frac{du_{1}}{\kappa},\ u'_{2}=\frac{du_{2}}{\kappa},\
u'_{3}=\frac{du_{3}}{\kappa},\ \theta'=\frac{\theta-\theta_{a}}{\theta_{u}},\ p'=\frac{d^{2}p}{\rho_{0}\nu\kappa},
\end{eqnarray*}
where $\theta_{u}=\theta_{c}-\theta_{a}$ with $\theta_{c}$ the
temperature at the bottom plate, ${\displaystyle
\kappa=\frac{K}{\rho_{0}\hat{C}_{p}}}$ and ${\displaystyle
\nu=\frac{\mu}{\rho_{0}}}$. Thus, removing the primes to simplify
the notation, from (\ref{modnoest1}) we get
\begin{equation}\label{modest1}
\left\{
\begin{array}
[c]{rcl}%
(\textbf{u}\cdot\nabla) \textbf{u}=Pr\left[\left(b+R\theta\right)\vec{e}_{3}-\nabla p+\Delta\textbf{u}\right]\ \text{ in }\Omega,\\
(\textbf{u}\cdot\nabla) \theta=\Delta \theta\ \text{ in } \Omega,\\
 \text{div }\textbf{u}=0\ \text{ in } \Omega,
\end{array}
\right.
\end{equation}
with $\Omega=\left(0,\, l\right)\times\left(0,\,
L\right)\times\left(0,\,1\right)$, where $l=\frac{l_{1}}{d}$ and
$L=\frac{L_{1}}{d}$. Moreover, ${\displaystyle
Pr=\frac{\nu}{\kappa}}$, ${\displaystyle R=\frac{\vert
\vec{g}\vert\alpha \theta_{u}d^{3}}{\kappa\nu}}$ and ${\displaystyle
b=-\frac{\vert \vec{g}\vert d^{3}}{\kappa\nu}}$. The number $R$ is
known as the Rayleigh number and it measures the effect of buoyancy; $Pr$ is known as the Prandtl number and it represents the
relationship between the speed of diffusion of momentum and the rate
of diffusion of
heat in the fluid, and $\vec{e}_{3}$ is the unit vector in the third direction, that is, $\vec{e}_{3}=(0,0,1)$. \\



Let us denote by $\partial\Omega$ the boundary of $\Omega$ and let $\Gamma_{1}:=\partial\Omega\cap\left\{ x_{3}=1\right\}$ and $\Gamma_{0}:=\partial\Omega\setminus \Gamma_{1}$. Then, the following boundary conditions are imposed:
\begin{equation}\label{condfro3}
\left\{
\begin{array}
[c]{rcl}%
\mathbf{u}={\boldsymbol 0} \ \ \text{on}\,\ \Gamma_{0}, \qquad u_{3}=0 \ \ \text{on}\,\ \Gamma_1,\\
\frac{\partial u_{i}}{\partial\textbf{n}}+M\frac{\partial
\theta}{\partial x_{i}}=0,\,\,\, i=1,2, \ \ \text{on}\,\
\Gamma_1,\\
\frac{\partial \theta}{\partial\textbf{n}}+B\theta=0 \ \
\text{on}\,\ \Gamma_1,\qquad
\theta =\theta_{c}  \ \ \text{on}\,\ \{ x_{3}=0\},\\
\frac{\partial \theta}{\partial\textbf{n}} =0 \ \ \text{on}\,\
\Gamma_{0}\setminus\left\{ x_{3}=0\right\},
\end{array}
\right.
\end{equation}
where $\textbf{n}=(n_{1},n_{2},n_{3})$ is the normal vector pointing outward, $M=\frac{\gamma \theta_{u}d}{\rho_{0}\nu k}$, $B=\frac{hd}{K}>0$, and $h$ is the heat exchange coefficient of the surface with the atmosphere.\\


The boundary conditions for the velocity in (\ref{condfro3})$_{1}$ are no slip conditions on the rigid and free surface. The condition (\ref{condfro3})$_{2}$ takes into account the Marangoni effect, which represents the mass transfer at an interface between two fluids due to a surface tension gradient.
Conditions (\ref{condfro3})$_{3}$ say that on the lateral surfaces there is not heat flow (adiabatics), that
on the free surface is allowed the heat flow, and  that the bottom surface is maintained at temperature $\theta_{c}$ (isothermal).\\

From the point of view of the existence of solution of RBM problem,
recently in \cite{Pardo} was discussed a bifurcation problem in
which, considering either the Rayleigh number or the Prandtl number
as bifurcation parameters. By using the local bifurcation theory
due to Crandall and Rabinowitz \cite{Crandall}, the authors showed the
existence of stationary solutions to the problem
(\ref{modest1})-(\ref{condfro3}), which bifurcate from a basic state
of heat conduction. For basic state we refer to the exact solution
of the problem (\ref{modest1})-(\ref{condfro3}), which is given by
\begin{eqnarray}
\textbf{u}_{b}=\textbf{0},\;\;\; \theta_{b}=\theta_{c}-\frac{\theta_{c}B}{1+B}x_{3}\ \text{ and }\ p_{b}=p_{1}x_{3}+p_{2}x_{3}^{2}. \label{estadobasico}
\end{eqnarray}

Previously to \cite{Pardo}, in \cite{Dauby,Hoyas,Hoyas2,Arne} were
obtained numerical results on the existence of solutions that
bifurcate of the basic stationary states, instability and patter
formation problems, as well as a validation of initial and boundary
conditions. However, from a theoretical point of view, no more
results are available in the literature. The main difficulty in the
treating the RBM problem (\ref{modest1})-(\ref{condfro3}), beyond
the coupling between Navier-Stokes system and heat equation, are the
crossed boundary conditions (\ref{condfro3}) involving tangential
derivatives of the temperature and normal derivatives of the
velocity field; in fact, in order to define tangential derivatives at
the boundary, intended in the trace sense, it is necessary
regularity of the weak solutions; this fact involves the geometry of
the domain in order to use elliptic regularity in Sobolev spaces
$W^{k,p}$ for the Laplace and Stokes equations, in polyhedral
domains (see \cite{Dauge,Dauge2,Grisvard} for elliptic regularity
results associated to Laplace equation and
\cite{Dauge1,Kellogg,Majda,Majda2} for elliptic regularity  results
related to the Stokes equation; see also \cite{Rodriguez} for
related problems associated to the
Boussinesq system).\\

From the point of view of optimal control theory, unlike to the Navier-Stokes stationary
equations, results on boundary control problems in which the cost
functional is subject to state equations governed by RBM system are
not known. Some optimal control results associated with the
Boussinesq equations are known, see for example
\cite{Abergel,Alekseev,Alekseev2,Tereshko,Lee2,Lee3,Lee4}; however, the mathematical formulation and the boundary control problem for Boussinesq equations differ from the RBM model in the type of boundary conditions, principally the condition (\ref{condfro3})$_2$ which takes into account the Marangoni effect, as well as the dimension of the domain. More exactly, in \cite{Abergel} the authors studied an optimal control problem minimizing the turbulence caused by the heat convection. The states are given by 3D-Boussinesq equations with Neumann control on the temperature. In \cite{Alekseev,Alekseev2,Tereshko}, the authors analyzed optimal control problems for the 3D-Boussinesq equations with Neumann and Dirichlet boundary controls. The results of  \cite{Pardo} do not include control theory. In \cite{Lee2,Lee4} the authors analyzed optimal control problems associated to the 2D-Boussinesq equations. The controls considered may be of either the distributed or the Neumann type. In \cite{Lee4} the author considers the approximation, by finite element methods, of the optimality system and derive optimal error estimates. The convergence of a simple gradient method is proved and some numerical results are given. In \cite{Lee3} the authors studied an optimal control problem for the Boussinesq equations, also in 2D, with Dirichlet control on the temperature. A gradient method for the solution of the discrete optimal control problem is presented and analyzed. Finally, the results of some computational experiments are presented. In the previous references, sufficient optimality conditions were not analyzed. Optimal control problems for the Navier-Stokes equations through the action of Dirichlet boundary conditions have been analyzed (see for instance in \cite{delosReyes,Gunzburger,Heinkenschloss}). In some cases, numerical results, either by solving the optimality system or by optimization methods, have been obtained.\\

In this paper, we analyze an optimal control problem for which the
velocity and the temperature of the fluid are controlled by
boundary data along portions of the boundary; the cost
functional is given by a sum of functionals which measure, in the
$L^2$-norm, the difference between the velocity (respectively, the
temperature) and a given prescribed velocity (respectively, a
prescribed temperature). The cost functional also measures the vorticity of the flow. The fluid motion is constrained to satisfy
the stationary system of RBM. The exact mathematical formulation
will be given in Section 2. We will prove the solvability of the
optimal control problem and, by using the Lagrange multiplier
method, we state the first-order optimality conditions; we derive an
optimality system and give a second-order sufficient optimality
condition. Moreover, we also study the uniqueness of the optimal solution. Beside to the solvability of the optimal control problem,
we first prove the existence of weak solutions for RBM system with
nonhomogeneous boundary data, as well as the uniqueness and
regularity properties. It is worthwhile to remark that the proof of
existence and regularity of weak solutions for RBM system
 is not a simple generalization of
the similar ones to deal with Navier-Stokes or related models in
fluid mechanics \cite{Elder2}. If fact, we are considering non
homogeneous crossed boundary conditions involving tangential
derivatives of the temperature and normal derivatives of the
velocity field, which permit to deal with pointwise constrained
boundary optimal control of Dirichlet and Neumann type. In
\cite{Pardo}, the boundary conditions are homogeneous, and thus,
boundary control problems are not considered. The non homogeneous
boundary conditions are assumed in spaces of kind
$H^{1/2}_{00}(\Gamma), \Gamma\subseteq \partial\Omega,$ which are
natural from the variational point of view; these space, which are
used as control spaces, are closed subspaces of  $H^{1/2}(\Gamma)$
and satisfy the embeddings $H^{1}(\Gamma)\hookrightarrow
H^{1/2}_{00}(\Gamma)\hookrightarrow L^2(\Gamma).$ On the other hand,
to define tangential derivatives at the boundary, intended in the
trace sense, it is necessary to analyze the regularity of the weak
solutions, in particular, it is required the $H^2$-regularity for
the temperature (cf. (\ref{condfro3})$_{2}$ and Lemma \ref{lempardo}
below). However, due the geometry of the domain, the regularity of
the weak solutions, when non homogeneous boundary conditions are
assumed, is a nontrivial subject. For that, we adapt regularity
results for the Laplace equation with Dirichlet-Neumann boundary
homogeneous conditions in corner domains of
\cite{Dauge,Dauge2,Grisvard}, and some ideas of \cite{Ziane} to
treat the Robin and Neumann nonhomogeneous boundary conditions. On
the other hand, from the point of view of the control theory, as far
as we known, our results are the first ones dealing with with
pointwise constrained boundary optimal control of the RBM system, by
using spaces $H_{00}^{1/2}(\Gamma)$ as the control spaces. We give
necessary and sufficient optimality conditions which are a
significant advance in the analysis of controlling these equations.
In order to obtain necessary optimality conditions we use an approach which differs from
the other ones in the case of 3D-Boussinesq equations (cf.
\cite{Abergel,Alekseev}). In fact, in order to derive the optimality
conditions, in \cite{Alekseev} the author used a theorem of Ioffe
and Tikhomorov and also he assumed a property, called \textit{Property
C}, whereas that in
 \cite{Abergel}, the authors used a penalization method because in that case the relation control-state is multivalued. It is worthwhile to remark that in the previous references related to convection problems, sufficient optimality conditions were not analyzed. Finally, from the point of view of numerical results, since the analysis of the control problem yields variational inequalities as optimality conditions, the numerical analysis offers new challenges, for instance, the applicability of the semi-smooth Newton method in order to obtain a numerical solution (cf. \cite{delosReyes} for numerical results in the context of Navier-Stokes model).\\

The outline of this paper is as follows: In Section 2, we give a
precise definition of the optimal control problem to be studied and,
in Section 3, we prove the existence and uniqueness of weak
solutions, as well as we show regularity properties. In Section 4,
we prove the existence of the optimal solution. In section 5, we
obtain the first-order optimality conditions, and by using the
Lagrange multipliers method we derive an optimality system. In
Section 6, we give a second-order sufficient optimality condition.
In Section 7, we establish a result of uniqueness of the optimal
solution.

\section{Statement of the boundary control problem}
\hspace{0.4cm}Throughout this paper we use the Sobolev space $H^m(\Omega),$ and
$L^p(\Omega),$ $1\leq p\leq \infty,$ with the usual notations for
norms $\Vert \cdot\Vert_{H^{m}(\Omega)}$ and $\Vert\cdot
\Vert_{L^{p}(\Omega)}$ respectively. If $H$ is a Hilbert space we
denote its inner product by $( \cdot,\cdot )_{H}$; in particular,
the $L^2(\Omega)$-inner product will be represented by
$(\cdot,\cdot)_{L^{2}(\Omega)}$. If $X$ is a general Banach space,
its topological dual will be denoted by $X'$ and the duality product
by $\langle \cdot,\cdot\rangle_{X',X}$. Corresponding Sobolev spaces
of vector valued functions will be denoted by ${\bf H}^1(\Omega),$
${\bf H}^2(\Omega),$ ${\bf L}^2(\Omega)$, and so on. If $\Gamma$ is
a connected subset of the boundary $\partial \Omega$, we define the
trace space
\begin{equation*}
H^{1/2}_{00}(\Gamma):=\{v\in L^{2}(\Gamma): \mbox{ there exists } g\in H^{\frac{1}{2}}(\partial\Omega), \ \left. g\right|_{\Gamma}=v, \ \left. g\right|_{\partial\Omega \setminus \Gamma}=0 \},
\end{equation*}
which is a closed subspace of $H^{\frac{1}{2}}(\Gamma)$ (see \cite{LionsDau}, p. 397), where $H^{\frac{1}{2}}(\Gamma)$ is the restriction of the elements of $H^{\frac{1}{2}}(\partial\Omega)$ to $\Gamma$.  We also will use the following Banach spaces
\begin{eqnarray*}
&&\textbf{X}:=\{\textbf{u}=(u_{1},u_{2},u_{3})\in \textbf{H}^1 (\Omega):\text { div }\textbf{u}=0,\, u_{3}=0\,\,\text { on }\,\,\Gamma_{1}\},\\
&&\textbf{X}_{0}:=\{\textbf{u}\in \textbf{H}^1 (\Omega):\text { div }\textbf{u}=0,\textbf{u}=0\,\,\text { on }\,\,\Gamma_{0},\,\, u_{3}=0\,\,\text { on }\,\,\Gamma_{1} \},\\
&&Y:=\{S\in H^1(\Omega): S=0\,\,\text { on }\,\,\{x_{3}=0\}\},\\
&&\widetilde{\textbf{X}}:=\left\{\textbf{u}\in \textbf{X}: \textbf{u}\cdot \textbf{n}=0 \text{ on } \Gamma_{0}\setminus\left\{x_{3}=0\right\}\right\},\\
&& \widetilde{\mathbf{H}}^{1/2}_{00}(\Gamma):=\left\{\mathbf{v}\in \mathbf{H}^{1/2}_{00}(\Gamma): \int_{\Gamma} \mathbf{v}\cdot \textbf{n} = 0, \ \mathbf{v}\cdot \textbf{n}=0 \ \mbox{on} \ \Gamma\setminus \{x_{3}=0\} \right\}.
\end{eqnarray*}
Moreover, if $\Gamma$ is an arbitrary subset of $\partial\Omega$, we
use the notation $\langle f,g \rangle_{\Gamma}$ to represent the
integral $\int_\Gamma fg\; dS$. In the paper, the letter $C$ will
denote diverse positive constants which may change from line to line
or even within a same line.

In order to establish the boundary control problem, we consider the
following stationary model related to
(\ref{modest1})-(\ref{condfro3}) with nonhomogeneous boundary data:

\begin{equation}\label{model001}
\left\{
\begin{array}
[c]{rcl}%
(\textbf{u}\cdot\nabla) \textbf{u}=Pr\left[\left(b+R\theta\right)\vec{e}_{3}-\nabla p+\Delta\textbf{u}\right]\ \ \ \text{in}\,\ \Omega,\\
(\textbf{u}\cdot\nabla) \theta=\Delta \theta\ \ \ \text{in}\,\  \Omega,\\
\text{div}\:\textbf{u}=0\ \ \ \text{in}\,\ \Omega,\\
\mathbf{u}=\mathbf{g} \ \ \text{on}\,\ \Gamma^1_{0}, \qquad \mathbf{u}=\mathbf{u}^0\ \  \text{on} \,\ \Gamma^2_{0},\\
u_{3}=0 \ \ \text{on}\,\ \Gamma_1,\\
\frac{\partial u_{i}}{\partial\textbf{n}}+M\frac{\partial
\theta}{\partial x_{i}}=0,\,\,\, i=1,2, \ \ \text{on}\,\
\Gamma_1,\\
\frac{\partial \theta}{\partial\textbf{n}}+B\theta=0 \ \
\text{on}\,\ \Gamma_1,\qquad
\theta =\phi_2  \ \ \text{on}\,\ \{ x_{3}=0\},\\
\frac{\partial \theta}{\partial\textbf{n}} =\phi_1 \ \ \text{on}\,\
\Gamma_{0}\setminus\left\{ x_{3}=0\right\},
\end{array}
\right.
\end{equation}
where $\Gamma_{0}=\Gamma^1_{0}\cup \Gamma^2_{0}$ with
$\Gamma^1_{0}\cap \Gamma^2_{0}=\emptyset$, the vector
$\mathbf{u}^{0}=(u^{0}_{1},u^{0}_{2},u^{0}_{3})\in
\widetilde{\mathbf{H}}^{1/2}_{00}(\Gamma_{0}^{2})$ is a Dirichlet
condition for the velocity $\mathbf{u}$ on $\Gamma_{0}^{2}$; the
field $\mathbf{g}=(g_{1},g_{2},g_{3})\in
\widetilde{\mathbf{H}}^{1/2}_{00}(\Gamma_{0}^{1})$ is given and
denotes a control for $\mathbf{u}$ on $\Gamma_{0}^{1}$;
additionally, $\phi_{1}\in
H^{\frac{1}{2}}(\Gamma_{0}\setminus\left\{ x_{3}=0\right\})$ is a
given function which denotes a Neumman control to temperature
$\theta$ on $\Gamma_{0}\setminus\left\{ x_{3}=0\right\}$, and
$\phi_{2}\in H^{1/2}_{00}(\{ x_{3}=0\})$ is a Dirichlet control to
temperature $\theta$ on $\left\{ x_{3}=0\right\}$.

Suppose that $\mathcal{\textbf{U}}_{1}\subset
\widetilde{\textbf{H}}^{1/2}_{00}(\Gamma_{0}^{1})$,
$\mathcal{U}_{2}\subset H^{\frac{1}{2}}(\Gamma_{0}\setminus\left\{
x_{3}=0\right\})$ and $\mathcal{U}_{3}\subset H^{1/2}_{00}(\{
x_{3}=0\})$ are nonempty sets, and $\gamma_{i}$, $i=1,...,6$, are
constants. Assume one of the following conditions:
    \begin{enumerate}
    \item[(i)]$\gamma_{i}\geq 0\text{ for } i=1,2,...,6$, with $\gamma_1,\gamma_2,\gamma_3$ not simultaneously zero, and $\mathcal{\textbf{U}}_{1}$, $\mathcal{U}_{2}$ and $\mathcal{U}_{3}$ are bounded closed convex sets;
    \item[(ii)] $\gamma_{i}\geq 0 \text{ for } i=1,2,3$, with $\gamma_1,\gamma_2,\gamma_3$ not simultaneously zero, $\gamma_{i}>0\text{ for } i=4,5,6$ and $\mathcal{\textbf{U}}_{1}$, $\mathcal{U}_{2}$ and $\mathcal{U}_{3}$ are closed convex sets.
 \end{enumerate}

We study the following constrained minimization problem on weak
solutions to problem (\ref{model001}), for fixed data
$\mathbf{u}^{0}\in \widetilde{\mathbf{H}}^{1/2}_{00}(\Gamma_{0}^{2})$:
 \begin{eqnarray}
\left\{
\begin{array}[c]{l}
\text{Find } [\textbf{u},\theta,\textbf{g},\phi_{1},\phi_{2}]\in \widetilde{\textbf{X}} \times H^1(\Omega) \times\mathcal{\textbf{U}}_{1} \times\mathcal{U}_{2} \times\mathcal{U}_{3} \text{ such that the functional}\vspace{0,4 cm}\\
\CJ
[\textbf{u},\theta,\textbf{g},\phi_{1},\phi_{2}]=\frac{\gamma_1}{2}\|\text{rot
}
\mathbf{u}\|_{L^2(\Omega)}^2+\frac{\gamma_2}{2}\|\mathbf{u}-\mathbf{u}_d\|_{L^2(\Omega)}^2
+\frac{\gamma_3}{2}\|\theta-\theta_d\|_{L^2(\Omega)}^2 +\frac{\gamma_{4}}{2}\|\mathbf{g}\|^2_{H^{\frac{1}{2}}(\Gamma_{0}^{1})}\\
\hspace{3 cm}
+\frac{\gamma_{5}}{2}\|\phi_1\|^2_{H^{\frac{1}{2}}({\Gamma_{0}\setminus\left\{
x_{3}=0\right\} })}
+\frac{\gamma_6}{2}\|\phi_2\|^2_{H^{\frac{1}{2}}(\{x_3=0\})}, \vspace{0,4 cm} \\
\text{is minimized subject to the constraint that }
[\textbf{u},\theta] \text{ is a weak solution of (\ref{model001})}. \text{ Here } \mathbf{u}_{d}\in \mathbf{L}^2(\Omega)\\
\text{and } \theta_{d}\in L^2(\Omega) \text{ are given.}
\end{array}
\right. \label{eq:funcional}
\end{eqnarray}

\vspace{0,3 cm}

\section{Well-posedness and Regularity of Solutions for (\ref{model001})}
\hspace{0.4cm}In this section we analyze the existence, uniqueness and regularity
of weak solution for system (\ref{model001}), which, as was said in
Section \ref{introd}, it is not a simple generalization of the
similar ones to deal with Navier-Stokes or related models in fluid
mechanics \cite{Elder2}.

\subsection{Weak Solutions for (\ref{model001})}
\hspace{0.4cm}We introduce the bilinear and trilinear forms $a:\textbf{X}\times
\textbf{X}\rightarrow\mathbb{R}$, $c:\textbf{X}\times
\textbf{X}\times \textbf{X}\rightarrow\mathbb{R}$,
$a_{1}:H^{1}(\Omega)\times H^{1}(\Omega)\rightarrow\mathbb{R}$,
$b_{1}:H^{1}(\Omega)\times \textbf{X}\rightarrow\mathbb{R}$ and
$c_{1}:\textbf{X}\times H^{1}(\Omega)\times
H^{1}(\Omega)\rightarrow\mathbb{R}$, for the velocity and
temperature:

\begin{center}
${\displaystyle a(\textbf{u},\textbf{v})=\int_\Omega
\nabla\textbf{u}\cdot\nabla\textbf{v}\: d\Omega}
 ,\qquad
  {\displaystyle c(\textbf{u},\textbf{v},\textbf{z})=\int_\Omega [(\textbf{u}\cdot\nabla)\textbf{v}]\cdot\textbf{z}\: d\Omega},$

${\displaystyle a_{1}(\theta,W)=\int_\Omega \nabla\theta\cdot\nabla
W\: d\Omega},\qquad {\displaystyle
b_{1}(\theta,\textbf{v})=\int_\Omega
\nabla\theta\cdot\frac{\partial\textbf{v}}{\partial x_{3}}\:
d\Omega}$,

${\displaystyle c_{1}(\textbf{u},\theta,W)=\int_\Omega
[(\textbf{u}\cdot\nabla)\theta] W\: d\Omega}$.

\end{center}

\begin{lemma} \label{lemalt}
The following relations hold for $c$ and $c_1$:
\begin{equation}
c(\mathbf{u},\mathbf{v},\mathbf{z})=-c(\mathbf{u},\mathbf{z},\mathbf{v}),
\:\: c(\mathbf{u},\mathbf{v},\mathbf{v})=0,\:\:\:
\forall\mathbf{u}\in \mathbf{X}_{0},\:\:\forall
\mathbf{v},\mathbf{z}\in \mathbf{H}^1(\Omega),\label{eq:altc}
\end{equation}
\begin{equation}
c_{1}(\mathbf{u},\theta,W)=-c_{1}(\mathbf{u},W,\theta), \:\:
c_{1}(\mathbf{u},\theta,\theta)=0,\:\:\: \forall \mathbf{u}\in
\mathbf{X}_{0},\:\: \forall\theta,W\in H^1(\Omega).\label{eq:altc1}
\end{equation}
\end{lemma}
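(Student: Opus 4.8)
The plan is to obtain all four identities from one integration-by-parts formula, using only the two features of the space $\mathbf{X}_{0}$ that matter here: $\operatorname{div}\mathbf{u}=0$ in $\Omega$, and $\mathbf{u}\cdot\mathbf{n}=0$ on the whole of $\partial\Omega$. First I would check that $c$ and $c_{1}$ are well defined and continuous at the stated regularity. Writing $c(\mathbf{u},\mathbf{v},\mathbf{z})=\int_{\Omega}u_{i}\,(\partial_{i}v_{j})\,z_{j}\,d\Omega$ and using the Sobolev embeddings $H^{1}(\Omega)\hookrightarrow L^{6}(\Omega)$ and $H^{1}(\Omega)\hookrightarrow L^{3}(\Omega)$, valid on the bounded Lipschitz domain $\Omega\subset\mathbb{R}^{3}$, H\"older's inequality with exponents $3,2,6$ gives $|c(\mathbf{u},\mathbf{v},\mathbf{z})|\le C\,\|\mathbf{u}\|_{H^{1}(\Omega)}\|\mathbf{v}\|_{H^{1}(\Omega)}\|\mathbf{z}\|_{H^{1}(\Omega)}$, and the same estimate holds for $c_{1}$ with the scalars $\theta,W$ in place of $\mathbf{v},\mathbf{z}$. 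In particular, for fixed $\mathbf{u}\in\mathbf{X}_{0}$, both $(\mathbf{v},\mathbf{z})\mapsto c(\mathbf{u},\mathbf{v},\mathbf{z})$ and $(\mathbf{v},\mathbf{z})\mapsto c(\mathbf{u},\mathbf{z},\mathbf{v})$ are continuous bilinear forms on $\mathbf{H}^{1}(\Omega)\times\mathbf{H}^{1}(\Omega)$.

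Next, I would fix $\mathbf{u}\in\mathbf{X}_{0}$ and take $\mathbf{v},\mathbf{z}$ with components in $C^{\infty}(\overline{\Omega})$. Adding the two relevant copies of $c$ and using the product rule,
\begin{equation*}
c(\mathbf{u},\mathbf{v},\mathbf{z})+c(\mathbf{u},\mathbf{z},\mathbf{v})=\int_{\Omega}\mathbf{u}\cdot\nabla(\mathbf{v}\cdot\mathbf{z})\,d\Omega=-\int_{\Omega}(\operatorname{div}\mathbf{u})\,(\mathbf{v}\cdot\mathbf{z})\,d\Omega+\int_{\partial\Omega}(\mathbf{u}\cdot\mathbf{n})\,(\mathbf{v}\cdot\mathbf{z})\,dS,
\end{equation*}
where the last equality is the Gauss--Green formula applied to $\mathbf{u}\in\mathbf{H}^{1}(\Omega)$ against the $C^{1}(\overline{\Omega})$ function $\mathbf{v}\cdot\mathbf{z}$, legitimate since $\Omega$ is Lipschitz. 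The volume term vanishes because $\operatorname{div}\mathbf{u}=0$. For the boundary term I use the geometry of $\Omega$: writing $\partial\Omega=\Gamma_{0}\cup\Gamma_{1}$, on $\Gamma_{0}$ one has $\mathbf{u}=\mathbf{0}$, while $\Gamma_{1}\subset\{x_{3}=1\}$ has outward unit normal $\mathbf{n}=\vec{e}_{3}$ and $u_{3}=0$ on $\Gamma_{1}$, so $\mathbf{u}\cdot\mathbf{n}=0$ a.e.\ on $\partial\Omega$ and the surface integral vanishes. Hence $c(\mathbf{u},\mathbf{v},\mathbf{z})=-c(\mathbf{u},\mathbf{z},\mathbf{v})$ for smooth $\mathbf{v},\mathbf{z}$; by density of such fields in $\mathbf{H}^{1}(\Omega)$ and the continuity of both sides, this extends to all $\mathbf{v},\mathbf{z}\in\mathbf{H}^{1}(\Omega)$. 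Setting $\mathbf{z}=\mathbf{v}$ gives $2\,c(\mathbf{u},\mathbf{v},\mathbf{v})=0$, that is, $c(\mathbf{u},\mathbf{v},\mathbf{v})=0$, which is (\ref{eq:altc}).

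The argument for $c_{1}$ is identical, with the scalar product $\theta W$ replacing $\mathbf{v}\cdot\mathbf{z}$: for $\mathbf{u}\in\mathbf{X}_{0}$ and smooth $\theta,W$,
\begin{equation*}
c_{1}(\mathbf{u},\theta,W)+c_{1}(\mathbf{u},W,\theta)=\int_{\Omega}\mathbf{u}\cdot\nabla(\theta W)\,d\Omega=-\int_{\Omega}(\operatorname{div}\mathbf{u})\,\theta W\,d\Omega+\int_{\partial\Omega}(\mathbf{u}\cdot\mathbf{n})\,\theta W\,dS=0,
\end{equation*}
and one passes to arbitrary $\theta,W\in H^{1}(\Omega)$ by density and continuity, then sets $W=\theta$; this yields (\ref{eq:altc1}).

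The Sobolev/H\"older estimate and the density passage are routine. The one step that deserves care — and the only possible obstacle — is justifying the integration by parts at the natural regularity; I avoid any difficulty by freezing $\mathbf{u}$ and approximating only the remaining arguments, so Gauss--Green is used solely for an $H^{1}$ function against a $C^{1}$ function on a Lipschitz domain. The other key input is that $\mathbf{u}\cdot\mathbf{n}$ vanishes on \emph{all} of $\partial\Omega$, which I read directly from the definition of $\mathbf{X}_{0}$ together with the fact that $\Gamma_{1}$ is a horizontal face with normal $\vec{e}_{3}$ — exactly the point where the product structure $\Omega=\hat{\Omega}\times(0,1)$ is needed. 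Note that $\mathbf{v},\mathbf{z}$ (resp.\ $\theta,W$) are not required to be divergence-free or to satisfy any boundary condition, which is why the identities hold for all of $\mathbf{H}^{1}(\Omega)$ (resp.\ $H^{1}(\Omega)$).
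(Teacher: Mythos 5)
Your proof is correct and follows essentially the same route as the paper: the paper's proof consists precisely of the observation that $\mathbf{u}\cdot\mathbf{n}=0$ on all of $\partial\Omega$ for $\mathbf{u}\in\mathbf{X}_{0}$ (zero trace on $\Gamma_{0}$, $u_{3}=0$ on the horizontal face $\Gamma_{1}$ with $\mathbf{n}=\vec{e}_{3}$), after which it invokes the standard Girault--Raviart argument (Lemma 2.2, p.~285) that you have written out explicitly via Gauss--Green, density and continuity.
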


\begin{proof}
Considering that $\textbf{u}\in \textbf{X}_{0}$, i.e. $\textbf{u}=0$
on $\Gamma_{0}$, $u_{3}=0$ on $\Gamma_{1}$ and $\text{ div
}\textbf{u}=0$, and the normal vector $\textbf{n}$ on $\Gamma_{1}$
is $\textbf{n}=(0,0,1)$, we obtain that $\textbf{u}\cdot
\textbf{n}=0$ on $\partial \Omega$. Therefore, the proof follows as
in Lemma 2.2 in \cite{Girault}, p. 285.
\end{proof}

\begin{lemma} (\cite{Pardo}) \label{lempardo}
Assume that $\Omega$ is a bounded domain with boundary $\partial \Omega$ Lipschitz, and $\partial \Omega=\Gamma_{0}\cup \Gamma_{1}$ with $\Gamma_{1}\subseteq\{x_{3}=C\}$, $C$ a constant. If $\theta\in H^2(\Omega)$ then
\begin{center}
${\displaystyle \int_{\Gamma_{1}}\frac{\partial\theta}{\partial
x_{1}} v_{1}+\frac{\partial\theta}{\partial x_{2}} v_{2}\:
dS=\int_\Omega\nabla\theta\cdot\frac{\partial\mathbf{v}}{\partial
x_{3}}\: d\Omega},\,\,\, \forall\mathbf{v}\in \mathbf{X}_{0}$.
\end{center}
\end{lemma}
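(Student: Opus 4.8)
The plan is to prove the identity by integrating by parts twice in the volume integral on the right-hand side and then exploiting the three defining properties of $\mathbf{v}\in\mathbf{X}_{0}$ --- solenoidality, $\mathbf{v}=\mathbf{0}$ on $\Gamma_{0}$, and $v_{3}=0$ on $\Gamma_{1}$ --- together with the flatness of $\Gamma_{1}$, to collapse all the resulting boundary terms into the single trace integral over $\Gamma_{1}$. The hypothesis $\theta\in H^{2}(\Omega)$ enters precisely here: it guarantees $\partial\theta/\partial x_{i}\in H^{1}(\Omega)$ for $i=1,2,3$, so that Green's formula may be applied and the boundary traces $\left.\partial\theta/\partial x_{i}\right|_{\partial\Omega}$ make sense in $H^{1/2}(\partial\Omega)\subset L^{2}(\partial\Omega)$.

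Concretely, I would first write $\int_{\Omega}\nabla\theta\cdot\frac{\partial\mathbf{v}}{\partial x_{3}}\,d\Omega=\sum_{i=1}^{3}\int_{\Omega}\frac{\partial\theta}{\partial x_{i}}\frac{\partial v_{i}}{\partial x_{3}}\,d\Omega$ and integrate by parts in $x_{3}$ in each summand; since $\sum_{i=1}^{3}\frac{\partial^{2}\theta}{\partial x_{i}\partial x_{3}}v_{i}=\nabla\!\left(\frac{\partial\theta}{\partial x_{3}}\right)\cdot\mathbf{v}$, this yields
\[
\int_{\Omega}\nabla\theta\cdot\frac{\partial\mathbf{v}}{\partial x_{3}}\,d\Omega=\int_{\partial\Omega}(\nabla\theta\cdot\mathbf{v})\,n_{3}\,dS-\int_{\Omega}\nabla\!\left(\frac{\partial\theta}{\partial x_{3}}\right)\cdot\mathbf{v}\,d\Omega.
\]
A second application of Green's formula to the last integral, using $\operatorname{div}\mathbf{v}=0$, gives $\int_{\Omega}\nabla\!\left(\frac{\partial\theta}{\partial x_{3}}\right)\cdot\mathbf{v}\,d\Omega=\int_{\partial\Omega}\frac{\partial\theta}{\partial x_{3}}(\mathbf{v}\cdot\mathbf{n})\,dS$, so that the whole quantity equals $\int_{\partial\Omega}(\nabla\theta\cdot\mathbf{v})\,n_{3}\,dS-\int_{\partial\Omega}\frac{\partial\theta}{\partial x_{3}}(\mathbf{v}\cdot\mathbf{n})\,dS$.

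It then remains only to identify these boundary contributions, splitting $\partial\Omega=\Gamma_{0}\cup\Gamma_{1}$. On $\Gamma_{0}$ both integrands vanish because $\mathbf{v}=\mathbf{0}$. On $\Gamma_{1}\subseteq\{x_{3}=C\}$ the outward unit normal is the constant vector $\mathbf{n}=(0,0,1)$, so $\mathbf{v}\cdot\mathbf{n}=v_{3}=0$ there, killing the second boundary integral, while $n_{3}=1$ and $v_{3}=0$ reduce $(\nabla\theta\cdot\mathbf{v})\,n_{3}$ to $\frac{\partial\theta}{\partial x_{1}}v_{1}+\frac{\partial\theta}{\partial x_{2}}v_{2}$ on $\Gamma_{1}$. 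Assembling the pieces produces exactly the asserted equality.

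The only point requiring care --- and the natural candidate for the ``hard part'' --- is the rigorous justification of the two integrations by parts on a merely Lipschitz domain. I would deal with this by a density argument: approximate $\theta$ in $H^{2}(\Omega)$ by smooth functions (equivalently, $\mathbf{v}$ in the relevant subspace of $\mathbf{H}^{1}(\Omega)$ by smooth solenoidal fields respecting the boundary constraints), apply the classical Green identities for the smooth approximants, and pass to the limit invoking continuity of the trace maps $H^{2}(\Omega)\to H^{1/2}(\partial\Omega)$ and $H^{1}(\Omega)\to L^{2}(\partial\Omega)$. No obstruction arises beyond this routine care with traces; the whole lemma is driven by the algebraic cancellations above, in the same spirit as Lemma~\ref{lemalt}.
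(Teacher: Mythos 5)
The paper offers no proof of this lemma (it is quoted from \cite{Pardo}), but your argument is correct and is precisely the standard computation the citation stands for: two applications of Green's formula, with $\operatorname{div}\mathbf{v}=0$ removing the second volume term, $\mathbf{v}=\mathbf{0}$ on $\Gamma_{0}$ and $v_{3}=0$, $\mathbf{n}=(0,0,1)$ on $\Gamma_{1}$ (as already used in Lemma \ref{lemalt}) collapsing the boundary terms. One small simplification: the two Green identities hold for arbitrary pairs of $H^{1}(\Omega)$ functions on a bounded Lipschitz domain, so the density step need only be performed on $\theta\in H^{2}(\Omega)$; approximating $\mathbf{v}$ by smooth solenoidal fields respecting the boundary constraints, the only genuinely delicate point you raise, is unnecessary.
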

Motivated by the formula of integration by parts and using Lemma \ref{lempardo}, we obtain the following weak formulation of System
(\ref{model001}).
\begin{definition}\label{defw1}
A pair $[\mathbf{u},\theta] \in \mathbf{X} \times H^1(\Omega)$ is said a weak solution of (\ref{model001}) if
\begin{equation}
Pr\, a(\mathbf{u},\mathbf{v})+Pr\, M\, b_{1}(\theta,\mathbf{v})+c(\mathbf{u},\mathbf{u},\mathbf{v})=\left\langle f(\theta),\mathbf{v}\right\rangle ,\;\:\forall\mathbf{v}\in \mathbf{X}_{0},\label{eq:020}
\end{equation}
\begin{equation}
c_{1}(\mathbf{u},\theta,W)+a_{1}(\theta,W)+\left\langle B\theta,W\right\rangle _{\Gamma_{1}}=\left\langle \phi_{1},W\right\rangle _{\Gamma_{0}\backslash\{x_{3}=0\}},\;\:\forall W\in Y, \label{eq:021}
\end{equation}
\begin{equation}
\mathbf{u}=\mathbf{g}\: \text{ on } \:\Gamma_{0}^{1}, \ \
\mathbf{u}=\mathbf{u}^{0}\: \text{ on } \:\Gamma_{0}^{2} \ \  \text{
and }\quad\theta=\phi_{2}\: \text{ on } \:\{x_{3}=0\},
\label{eq:022}
\end{equation}
where $\displaystyle\left\langle f(\theta),\mathbf{v}\right\rangle
=\int_\Omega Pr\left(b+R\theta\right)\textbf{e}_{3}\cdot\mathbf{v}\:
d\Omega$.
\end{definition}

\subsection{Existence of Weak Solutions}\label{solweak}
\hspace{0.4cm}In order to prove the existence of a solution to the problem
(\ref{eq:020})-(\ref{eq:022}) we reduce the problem to an auxiliary
problem with homogeneous conditions for the velocity $\textbf{u}$ on
$\Gamma_{0}$ and the temperature $\theta$ on $\{x_{3}=0\}$. For that,
we will use the Hopf Lemma (see Lemma 4.2 of \cite{Galdi}, p. 28). Notice that if $\mathbf{u}^{0}\in
\widetilde{\mathbf{H}}^{1/2}_{00}(\Gamma_{0}^{2})$ and
$\mathbf{g}\in \widetilde{\mathbf{H}}^{1/2}_{00}(\Gamma_{0}^{1})$,
then there exist $\widetilde{\mathbf{u}}^{0}\in
\mathbf{H}^{\frac{1}{2}}(\partial\Omega)$ and
$\widetilde{\mathbf{g}}\in \mathbf{H}^{\frac{1}{2}}(\partial\Omega)$
such that
\begin{equation*}
\widetilde{\mathbf{u}}^{0} \mid_{\Gamma^2_{0}}  = \mathbf{u}^{0}, \ \ \widetilde{\mathbf{u}}^{0}\mid_{\partial\Omega\setminus\Gamma^2_{0}} = 0, \ \ \int_{\Gamma_{0}^{2}} \mathbf{u}^{0} \cdot \mathbf{n}=0,\ \ \mathbf{u}^{0} \cdot \mathbf{n}=0 \ \mbox{ on } \Gamma_{0}^{2}\setminus \{x_{3}=0\},
\end{equation*}
\begin{equation*}
\widetilde{\mathbf{g}}\mid_{\Gamma^1_{0}}  = \mathbf{g}, \ \ \widetilde{\mathbf{g}}\mid_{\partial\Omega\setminus\Gamma^1_{0}} = 0, \ \ \int_{\Gamma_{0}^{1}} \mathbf{g} \cdot \mathbf{n}=0,\ \ \mathbf{g} \cdot \mathbf{n}=0 \ \mbox{ on } \Gamma_{0}^{1}\setminus \{x_{3}=0\}.
\end{equation*}
Thus, the function $\widetilde{\mathbf{u}}^{0} + \widetilde{\mathbf{g}}\in \mathbf{H}^{\frac{1}{2}}(\partial\Omega)$ and
\begin{eqnarray*}
\int_{\partial\Omega} (\widetilde{\mathbf{u}}^{0} + \widetilde{\mathbf{g}})\cdot \mathbf{n}= \int_{\Gamma_{0}^{1}} \widetilde{\mathbf{g}} \cdot \mathbf{n} +\int_{\Gamma_{0}^{2}} \widetilde{\mathbf{u}}^{0} \cdot \mathbf{n}= \int_{\Gamma_{0}^{1}} \mathbf{g} \cdot \mathbf{n}+\int_{\Gamma_{0}^{2}} \mathbf{u}^{0} \cdot \mathbf{n}=0.
\end{eqnarray*}
Therefore, by the Hopf Lemma, there exists a function $\textbf{u}_{\varepsilon}=(u_{\varepsilon_{1}},u_{\varepsilon_{2}},u_{\varepsilon_{3}})$ which satisfies the conditions
\begin{equation*}
\textbf{u}_{\varepsilon}\in \textbf{H}^{1}(\Omega),\qquad \text{ div }\textbf{u}_{\varepsilon}=0\; \text { in  } \Omega,\qquad \textbf{u}_{\varepsilon}=\widetilde{\mathbf{u}}^{0} + \widetilde{\mathbf{g}}\; \text { on  } \partial\Omega,
\end{equation*}
\begin{equation*}\label{exthop}
\left\Vert \mathbf{u}_{\varepsilon}\right\Vert_{H^1(\Omega)}\leq C
\left(\Vert \mathbf{u}^{0}
\Vert_{H^{\frac{1}{2}}(\Gamma_{0}^{2})}+\Vert \mathbf{g}
\Vert_{H^{\frac{1}{2}}(\Gamma_{0}^{1})}\right), \qquad
\left|c(\textbf{v},\textbf{u}_{\varepsilon},\textbf{v})\right|\leq\varepsilon\left\Vert
\textbf{v}\right\Vert _{H^{1}(\Omega)}^{2},\;\;\forall\textbf{v}\in
\mathbf{X}_{0},
\end{equation*}
where $C=C(n,\Omega)$ and
$\varepsilon>0$ is a real number arbitrarily small. Notice that
$\mathbf{u}_{\varepsilon}\mid_{\Gamma^2_{0}}  = \mathbf{u}^{0}$ and
$\mathbf{u}_{\varepsilon}\mid_{\Gamma^1_{0}}  = \mathbf{g}$.
Moreover, proceeding as in Lemma \ref{lemalt}, we can easily prove
that the following relations hold:
\begin{equation}
c(\mathbf{u}_{\varepsilon},\mathbf{u},\mathbf{v})=-c(\mathbf{u}_{\varepsilon},\mathbf{v},\mathbf{u}),
\:\: c(\mathbf{u}_{\varepsilon},\mathbf{u},\mathbf{u})=0,\:\:\:
\forall\mathbf{u},\mathbf{v}\in \mathbf{X}_{0},\label{hopf1}
\end{equation}
\begin{equation}
c_{1}(\mathbf{u}_{\varepsilon},\theta,W)=-c_{1}(\mathbf{u}_{\varepsilon},W,\theta),
\:\: c_{1}(\mathbf{u}_{\varepsilon},\theta,\theta)=0,\:\:\:
\forall\theta,W\in Y.\label{hopf2}
\end{equation}

On the other hand, arguing as in \cite{Casas}, we can construct a
function $\theta_{\delta}\in H^{1}(\Omega)$ such that
\begin{equation*}
\theta_{\delta}=\phi_{2}\; \text{ on }\;\{x_{3}=0\},\:\:\:\;\frac{\partial\theta_{\delta}}{\partial \textbf{n}}+B\theta_{\delta}=0\; \text{ on } \;\Gamma_{1},\:\:\:\;\frac{\partial\theta_{\delta}}{\partial\textbf{n}}=\phi_{1}\;\text{ on }\;\Gamma_{0}\backslash\{x_{3}=0\},
\end{equation*}
\begin{equation}\label{thetadeltaest}
\left\Vert \theta_{\delta} \right\Vert_{L^4(\Omega)}\leq \delta,\:\:\:\; \left\Vert \theta_{\delta} \right\Vert_{H^1(\Omega)}\leq C\left(\left\Vert \phi_{1} \right\Vert_{H^{\frac{1}{2}}(\Gamma_{0}\backslash\{x_{3}=0\})}+\left\Vert \phi_{2}\right\Vert_{H^{\frac{1}{2}}(\{x_{3}=0\})}\right).
\end{equation}
Here $\delta$ is an arbitrarily small number and the constant $C$ depends on $\delta$.\\

Rewriting $[\textbf{u},\theta] \in \textbf{X}\times H^{1}(\Omega)$
in the form
$\textbf{u}=\textbf{u}_{\varepsilon}+\widehat{\textbf{u}}$ and
$\theta=\theta_{\delta}+\widehat{\theta}$ with
$\widehat{\textbf{u}}\in \textbf{X}_{0}$ and $\widehat{\theta}\in Y$
new unknown functions, from Definition \ref{defw1} we obtain the
following nonlinear problem: Find
$[\widehat{\textbf{u}},\widehat{\theta}] \in \mathbf{X}_0\times Y$
such that
\begin{eqnarray}
&& Pr\, a(\widehat{\textbf{u}},\mathbf{v})+Pr\, M\, b_{1}(\widehat{\theta},\mathbf{v})+c(\widehat{\mathbf{u}},\widehat{\mathbf{u}},\mathbf{v})+c(\widehat{\mathbf{u}},{\mathbf{u}_{\varepsilon}},\mathbf{v})+c({\mathbf{u}_{\varepsilon}},\widehat{\mathbf{u}},\mathbf{v})=\left\langle f(\widehat{\theta}+\theta_\delta),\mathbf{v}\right\rangle\nonumber\\
&&\hspace{0.5cm} -Pr\, a({\textbf{u}_{\varepsilon}},\mathbf{v})-Pr\, M\, b_{1}({\theta}_{\delta},\mathbf{v})-c({\mathbf{u}_{\varepsilon}},{\mathbf{u}_{\varepsilon}},\mathbf{v}),\;\:\forall\mathbf{v}\in \mathbf{X}_{0},\label{eq:020a}\\
&&c_{1}(\widehat{\mathbf{u}},\widehat{\theta},W)+c_{1}({\mathbf{u}_{\varepsilon}},\widehat{\theta},W)+c_{1}(\widehat{\mathbf{u}},{\theta_\delta},W)+a_{1}(\widehat{\theta},W)+\left\langle B\widehat{\theta},W\right\rangle _{\Gamma_{1}}=\left\langle \phi_{1},W\right\rangle _{\Gamma_{0}\backslash\{x_{3}=0\}}\nonumber\\
&&\hspace{0.5cm}-c_{1}({\mathbf{u}_{\varepsilon}},{\theta_\delta},W)-a_{1}({\theta_\delta},W)-\left\langle
B{\theta_\delta},W\right\rangle _{\Gamma_{1}},\:\forall W\in Y.
\label{eq:021a}
\end{eqnarray}
Here $\left\langle f(\theta),\mathbf{v}\right\rangle$ is as in
Definition \ref{defw1}.

In order to prove existence of a solution
$[\widehat{\textbf{u}},\widehat{\theta}]\in \textbf{X}_{0}\times Y$
of (\ref{eq:020a})-(\ref{eq:021a}), we introduce the mapping
$F:\textbf{X}_{0}\rightarrow \textbf{X}_{0}$ defined by
$F(\bar{\mathbf{u}})=\widehat{\textbf{u}}$, $\bar{\mathbf{u}}\in
\textbf{X}_{0},$ such that
$[\widehat{\textbf{u}},\widehat{\theta}]\in \textbf{X}_{0}\times Y$
is the solution of the following linearized problem
\begin{eqnarray}
&& Pr\, a(\widehat{\textbf{u}},\mathbf{v})+Pr\, M\, b_{1}(\widehat{\theta},\mathbf{v})+c(\bar{\mathbf{u}},\widehat{\mathbf{u}},\mathbf{v})+c(\bar{\mathbf{u}},{\mathbf{u}_{\varepsilon}},\mathbf{v})+c({\mathbf{u}_{\varepsilon}},\widehat{\mathbf{u}},\mathbf{v})=\left\langle f(\widehat{\theta}+\theta_\delta),\mathbf{v}\right\rangle\nonumber\\
&&\hspace{0.5cm} -Pr\, a({\textbf{u}_{\varepsilon}},\mathbf{v})-Pr\, M\, b_{1}({\theta}_{\delta},\mathbf{v})-c({\mathbf{u}_{\varepsilon}},{\mathbf{u}_{\varepsilon}},\mathbf{v}),\;\:\forall\mathbf{v}\in \mathbf{X}_{0},\label{eq:020al}\\
&&c_{1}(\bar{\mathbf{u}},\widehat{\theta},W)+c_{1}({\mathbf{u}_{\varepsilon}},\widehat{\theta},W)+c_{1}(\bar{\mathbf{u}},{\theta_\delta},W)+a_{1}(\widehat{\theta},W)+\left\langle B\widehat{\theta},W\right\rangle _{\Gamma_{1}}=\left\langle \phi_{1},W\right\rangle _{\Gamma_{0}\backslash\{x_{3}=0\}}\nonumber\\
&&\hspace{0.5cm}-c_{1}({\mathbf{u}_{\varepsilon}},{\theta_\delta},W)-a_{1}({\theta_\delta},W)-\left\langle
B{\theta_\delta},W\right\rangle _{\Gamma_{1}},\:\forall W\in Y.
\label{eq:021al}
\end{eqnarray}
In next lemma, we shall show that the operator
$F:\textbf{X}_{0}\rightarrow \textbf{X}_{0}$ is well-defined.

\begin{lemma} \label{lemexissolpl}
Let $\bar{\mathbf{u}}\in \mathbf{X}_{0}$ and ${\phi}_{1}\in
H^{\frac{1}{2}}(\Gamma_{0}\setminus\left\{x_{3}=0\right\})$. Then
there exists a unique weak solution
$[\widehat{\mathbf{u}},\widehat{\theta}]\in \mathbf{X}_{0}\times Y$
of problem (\ref{eq:020al})-(\ref{eq:021al}). Moreover, the
following estimates hold
\begin{eqnarray}
&\left\Vert \widehat{\mathbf{u}} \right\Vert _{
H^1(\Omega)}&\!\!\!\!\leq  C\left(\vert b\vert+(R+M)\left(\Vert
\widehat{\theta}\Vert _{H^{1}(\Omega)}+\Vert \theta_\delta\Vert
_{H^{1}(\Omega)}\right)+\frac{1}{Pr}\Vert \bar{\mathbf{u}}\Vert
_{L^{4}(\Omega)}\Vert \mathbf{u}_{\varepsilon}\Vert
_{ L^{4}(\Omega)}\right)\nonumber\\
&& + C\left(\Vert \nabla\mathbf{u}_{\varepsilon}\Vert
_{L^{2}(\Omega)}+\frac{1}{Pr}\Vert \mathbf{u}_{\varepsilon}\Vert
_{H^{1}(\Omega)}^{2}\right),\label{eq:estu}
\end{eqnarray}

\vspace{-0,6 cm}

\begin{eqnarray}
&\Vert \widehat{\theta}\Vert _{H^1(\Omega)}&\!\!\!\!\leq
C\left(\left\Vert \phi_{1}\right\Vert
_{H^{\frac{1}{2}}(\Gamma_{0}\backslash\{x_{3}=0\})}+\Vert
\bar{\mathbf{u}}\Vert_{L^{4}(\Omega)}\Vert
\theta_\delta\Vert_{L^{4}(\Omega)}+\left(\Vert
\mathbf{u}_{\varepsilon}\Vert_{L^{4}(\Omega)}+1+B\right)\Vert
\theta_\delta\Vert_{H^{1}(\Omega)}\right),\label{eq:027}
\end{eqnarray}
where $C$ is a constant independent of $\bar{\mathbf{u}}$,
$\widehat{\mathbf{u}}$, $\phi_{1}$ and $\widehat{\theta}$.
\end{lemma}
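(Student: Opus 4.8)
The plan is to exploit the \emph{triangular} structure of the linearized system (\ref{eq:020al})--(\ref{eq:021al}): in (\ref{eq:021al}) the only unknown is $\widehat\theta$ (the velocity entering it, $\bar{\mathbf{u}}$ and $\mathbf{u}_\varepsilon$, is data), so one first solves (\ref{eq:021al}) for $\widehat\theta$ and then substitutes into (\ref{eq:020al}), which becomes a linear elliptic problem for $\widehat{\mathbf{u}}$ alone. Each of the two subproblems is handled by the Lax--Milgram theorem, the convective terms being killed on the diagonal by the skew-symmetry relations (\ref{eq:altc1}), (\ref{hopf2}) (resp. (\ref{eq:altc}), (\ref{hopf1})) and coercivity being completed by a Poincar\'e inequality on $Y$ (resp. on $\mathbf{X}_0$), valid since elements of $Y$ vanish on $\{x_{3}=0\}$ and elements of $\mathbf{X}_0$ vanish on $\Gamma_0$.

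\textbf{Temperature.} On $Y\times Y$ define $\mathcal{B}(\widehat\theta,W):=a_{1}(\widehat\theta,W)+c_{1}(\bar{\mathbf{u}},\widehat\theta,W)+c_{1}(\mathbf{u}_\varepsilon,\widehat\theta,W)+\left\langle B\widehat\theta,W\right\rangle_{\Gamma_{1}}$, and move every term of (\ref{eq:021al}) that is not bilinear in $(\widehat\theta,W)$ to the right, obtaining $\ell\in Y'$. Continuity of $\mathcal{B}$ follows from Cauchy--Schwarz for $a_1$, from H\"older together with the embedding $\mathbf{H}^{1}(\Omega)\hookrightarrow\mathbf{L}^{4}(\Omega)$ for the trilinear terms (e.g.\ $|c_{1}(\bar{\mathbf{u}},\widehat\theta,W)|\le\|\bar{\mathbf{u}}\|_{L^{4}(\Omega)}\|\nabla\widehat\theta\|_{L^{2}(\Omega)}\|W\|_{L^{4}(\Omega)}$), and from the trace theorem for the boundary term; $\ell\in Y'$ is checked similarly, using $H^{1/2}(\Gamma)\hookrightarrow L^{2}(\Gamma)$ to bound $\left\langle\phi_{1},W\right\rangle_{\Gamma_{0}\backslash\{x_{3}=0\}}$ and (\ref{thetadeltaest}) for the terms carrying $\theta_\delta$. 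For coercivity, $W=\widehat\theta$ gives $c_{1}(\bar{\mathbf{u}},\widehat\theta,\widehat\theta)=c_{1}(\mathbf{u}_\varepsilon,\widehat\theta,\widehat\theta)=0$ by (\ref{eq:altc1}) and (\ref{hopf2}), so $\mathcal{B}(\widehat\theta,\widehat\theta)=\|\nabla\widehat\theta\|_{L^{2}(\Omega)}^{2}+B\|\widehat\theta\|_{L^{2}(\Gamma_{1})}^{2}\ge\|\nabla\widehat\theta\|_{L^{2}(\Omega)}^{2}\ge c\,\|\widehat\theta\|_{H^{1}(\Omega)}^{2}$ (Poincar\'e on $Y$). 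Lax--Milgram yields a unique $\widehat\theta\in Y$. Testing (\ref{eq:021al}) with $W=\widehat\theta$ and bounding the right-hand terms by $|\langle\phi_1,\widehat\theta\rangle|\le C\|\phi_1\|_{H^{1/2}(\Gamma_0\backslash\{x_3=0\})}\|\widehat\theta\|_{H^1(\Omega)}$, $|c_1(\bar{\mathbf{u}},\theta_\delta,\widehat\theta)|=|c_1(\bar{\mathbf{u}},\widehat\theta,\theta_\delta)|\le\|\bar{\mathbf{u}}\|_{L^4(\Omega)}\|\nabla\widehat\theta\|_{L^2(\Omega)}\|\theta_\delta\|_{L^4(\Omega)}$, $|c_1(\mathbf{u}_\varepsilon,\theta_\delta,\widehat\theta)|\le\|\mathbf{u}_\varepsilon\|_{L^4(\Omega)}\|\theta_\delta\|_{H^1(\Omega)}\|\widehat\theta\|_{H^1(\Omega)}$, $|a_1(\theta_\delta,\widehat\theta)|\le\|\theta_\delta\|_{H^1(\Omega)}\|\widehat\theta\|_{H^1(\Omega)}$ and $|\langle B\theta_\delta,\widehat\theta\rangle_{\Gamma_1}|\le CB\|\theta_\delta\|_{H^1(\Omega)}\|\widehat\theta\|_{H^1(\Omega)}$, then dividing by $\|\nabla\widehat\theta\|_{L^2(\Omega)}$ and applying Poincar\'e on $Y$ once more, gives (\ref{eq:027}).

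\textbf{Velocity.} With $\widehat\theta$ now fixed, set $\mathcal{A}(\widehat{\mathbf{u}},\mathbf{v}):=Pr\,a(\widehat{\mathbf{u}},\mathbf{v})+c(\bar{\mathbf{u}},\widehat{\mathbf{u}},\mathbf{v})+c(\mathbf{u}_\varepsilon,\widehat{\mathbf{u}},\mathbf{v})$ on $\mathbf{X}_{0}\times\mathbf{X}_{0}$ and collect the remaining terms of (\ref{eq:020al})---all linear in $\mathbf{v}$ and involving only data and the known $\widehat\theta,\theta_\delta$---into a functional in $\mathbf{X}_{0}'$; its boundedness uses Cauchy--Schwarz, $\mathbf{H}^{1}\hookrightarrow\mathbf{L}^{4}$, the definitions of $b_{1}$ and $f$, and the bounds on $\mathbf{u}_\varepsilon$. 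Continuity of $\mathcal{A}$ is as before; coercivity follows from $\mathbf{v}=\widehat{\mathbf{u}}$, since (\ref{eq:altc}) and (\ref{hopf1}) give $\mathcal{A}(\mathbf{v},\mathbf{v})=Pr\,\|\nabla\mathbf{v}\|_{L^{2}(\Omega)}^{2}$, and Poincar\'e on $\mathbf{X}_{0}$ controls $\|\mathbf{v}\|_{H^{1}(\Omega)}$. Lax--Milgram provides a unique $\widehat{\mathbf{u}}\in\mathbf{X}_{0}$. Finally, testing (\ref{eq:020al}) with $\mathbf{v}=\widehat{\mathbf{u}}$ (the convective terms vanish again) and estimating $|Pr\,M\,b_1(\widehat\theta,\widehat{\mathbf{u}})|\le Pr\,M\,\|\widehat\theta\|_{H^1(\Omega)}\|\nabla\widehat{\mathbf{u}}\|_{L^2(\Omega)}$, $|\langle f(\widehat\theta+\theta_\delta),\widehat{\mathbf{u}}\rangle|\le C\,Pr(|b|+R(\|\widehat\theta\|_{H^1(\Omega)}+\|\theta_\delta\|_{H^1(\Omega)}))\|\nabla\widehat{\mathbf{u}}\|_{L^2(\Omega)}$, $|Pr\,a(\mathbf{u}_\varepsilon,\widehat{\mathbf{u}})|\le Pr\,\|\nabla\mathbf{u}_\varepsilon\|_{L^2(\Omega)}\|\nabla\widehat{\mathbf{u}}\|_{L^2(\Omega)}$, $|Pr\,M\,b_1(\theta_\delta,\widehat{\mathbf{u}})|\le Pr\,M\,\|\theta_\delta\|_{H^1(\Omega)}\|\nabla\widehat{\mathbf{u}}\|_{L^2(\Omega)}$, $|c(\bar{\mathbf{u}},\mathbf{u}_\varepsilon,\widehat{\mathbf{u}})|=|c(\bar{\mathbf{u}},\widehat{\mathbf{u}},\mathbf{u}_\varepsilon)|\le\|\bar{\mathbf{u}}\|_{L^4(\Omega)}\|\nabla\widehat{\mathbf{u}}\|_{L^2(\Omega)}\|\mathbf{u}_\varepsilon\|_{L^4(\Omega)}$ and $|c(\mathbf{u}_\varepsilon,\mathbf{u}_\varepsilon,\widehat{\mathbf{u}})|\le C\|\mathbf{u}_\varepsilon\|_{H^1(\Omega)}^2\|\nabla\widehat{\mathbf{u}}\|_{L^2(\Omega)}$, then dividing by $Pr\,\|\nabla\widehat{\mathbf{u}}\|_{L^2(\Omega)}$ and using Poincar\'e on $\mathbf{X}_0$, yields (\ref{eq:estu}).

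No deep obstacle arises here: once one observes the decoupling, each subproblem is coercive and Lax--Milgram applies directly (the genuine nonlinearity being deferred to a fixed-point argument for the map $F$ in a subsequent step). The only real care is in the last bookkeeping---splitting the trilinear terms through the skew-symmetry relations in the right way so that the norms on the right-hand sides of the a priori bounds come out in exactly the stated form (in particular the factors $\tfrac{1}{Pr}$, $(R+M)$, and the mixed $\|\bar{\mathbf{u}}\|_{L^4(\Omega)}\|\theta_\delta\|_{L^4(\Omega)}$, $\|\mathbf{u}_\varepsilon\|_{L^4(\Omega)}\|\theta_\delta\|_{H^1(\Omega)}$ products)---and in keeping the crossed Marangoni term $b_1$ and the Neumann datum $\phi_1$ in their correct function spaces.
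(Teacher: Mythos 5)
Your proof is correct and follows essentially the same route as the paper: exploit the decoupling to solve (\ref{eq:021al}) for $\widehat\theta$ first and then (\ref{eq:020al}) for $\widehat{\mathbf{u}}$, each time via Lax--Milgram with coercivity coming from the skew-symmetry relations (\ref{eq:altc}), (\ref{eq:altc1}), (\ref{hopf1}), (\ref{hopf2}) plus the generalized Poincar\'e inequality, and finally obtain (\ref{eq:estu})--(\ref{eq:027}) by testing with $\mathbf{v}=\widehat{\mathbf{u}}$ and $W=\widehat\theta$. No substantive difference from the paper's argument.
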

\begin{proof}
We consider the bilinear continuous mappings
$\hat{a}:\textbf{X}_{0}\times \textbf{X}_{0}\rightarrow\mathbb{R}$
and $\hat{a}_{1}:Y\times Y\rightarrow\mathbb{R}$ given by
\begin{eqnarray*}
&&\displaystyle \hat{a}(\widehat{\textbf{u}},\textbf{v})=Pr\, a(\widehat{\textbf{u}},\textbf{v})+c(\bar{\textbf{u}},\widehat{\textbf{u}},\textbf{v})+c({\mathbf{u}_{\varepsilon}},\widehat{\mathbf{u}},\mathbf{v}), \:\:\forall \widehat{\textbf{u}},\textbf{v}\in \textbf{X}_{0},\\
&&\displaystyle\hat{a}_{1}(\widehat{\theta},W)=c_{1}(\bar{\textbf{u}},\widehat{\theta},W)+c_{1}({\mathbf{u}_{\varepsilon}},\widehat{\theta},W)+a_{1}(\widehat{\theta},W)+\left\langle
B\widehat{\theta},W\right\rangle _{\Gamma_{1}},
\:\:\forall\widehat{\theta},W \in Y.
\end{eqnarray*}
Consequently, we rewrite (\ref{eq:020al}) and (\ref{eq:021al}) as
\begin{eqnarray}
\hat{a}(\widehat{\textbf{u}},\textbf{v}) =\left\langle l_{\widehat{\theta}},\textbf{v}\right\rangle,\;\forall\textbf{v}\in \textbf{X}_{0},\label{eq:025a}\\
\hat{a}_{1}(\widehat{\theta},W)=\left\langle
\widetilde{\phi}_{1},W\right\rangle,\;\forall W\in Y,\label{eq:026a}
\end{eqnarray}
where
$$\left\langle
l_{\widehat{\theta}},\textbf{v}\right\rangle= \left\langle
f(\widehat{\theta}+\theta_\delta),\textbf{v}\right\rangle -Pr\, M\,
b_{1}(\widehat{\theta}+{\theta}_{\delta},\textbf{v})-c(\bar{\mathbf{u}},{\mathbf{u}_{\varepsilon}},\mathbf{v})
-Pr\, a({\textbf{u}_{\varepsilon}},\mathbf{v})-c({\mathbf{u}_{\varepsilon}},{\mathbf{u}_{\varepsilon}},\mathbf{v}),\;\forall\textbf{v}\in
\textbf{X}_{0},
$$
$$\left\langle \widetilde{\phi}_{1},W\right\rangle=
\left\langle
\phi_{1},W\right\rangle_{\Gamma_{0}\backslash\{x_{3}=0\}}-c_{1}(\bar{\mathbf{u}},\theta_\delta,W)-c_{1}({\mathbf{u}_{\varepsilon}},{\theta_\delta},W)
-a_{1}({\theta_\delta},W)-\left\langle
B{\theta_\delta},W\right\rangle _{\Gamma_{1}},\;\forall W\in Y.$$
 We
can verify that the operator bilinear $\hat{a}_{1}$ is continuous
and coercive on $Y$ and $\widetilde{\phi}_{1}\in Y'$. Indeed, the
continuity of $\hat{a}_{1}$ and $\widetilde{\phi}_{1}$ it follows
from the H\"older inequality and Sobolev embeddings. Moreover, the
coercivity of $\hat{a}_{1}$ follows from (\ref{eq:altc1}),
(\ref{hopf2}) and the following generalized Poincar\'e inequality:
\begin{eqnarray}\label{poincare1}
\Vert u\Vert_{L^2(\Omega)}\leq C\left(\Vert \nabla
u\Vert_{L^2(\Omega)}+\int_\Sigma\vert u\vert\right),\ \forall u\in
H^1(\Omega),
\end{eqnarray}
where $C=C(n,\Omega,\Sigma)$ and $\Sigma$ is an arbitrary portion of $\partial\Omega$ of positive measure (cf. Lemma 10.9 in \cite{Feireisl}, p. 327; see also \cite{Galdi1}, p. 56).
 Therefore, by the Lax-Milgram theorem, there exists a unique $\widehat{\theta}\in Y$ which satisfies equation (\ref{eq:026a}). Knowing $\widehat{\theta}$ and inserting it in the equation
(\ref{eq:025a}), by using the H\"older inequality and Sobolev
embeddings we can verify that the operator bilinear $\hat{a}$ is
continuous on $\textbf{X}_{0}$ and $l_{\theta}\in \textbf{X}'_{0}$.
Moreover, from (\ref{eq:altc}), (\ref{hopf1}) and using the
generalized Poincar\'e inequality (\ref{poincare1}) we have that
$\hat{a}$ is coercive. Therefore, by the Lax-Milgram theorem, there
exists a unique $\widehat{\textbf{u}}\in \textbf{X}_{0}$ which
satisfies equation (\ref{eq:025a}). Finally, setting
$\mathbf{v}=\widehat{\mathbf{u}}$ in (\ref{eq:025a}),
$W=\widehat{\theta}$ in
 (\ref{eq:026a}) and using the generalized Poincar\'e inequality (\ref{poincare1}), we easily obtain (\ref{eq:estu}) and (\ref{eq:027}).
 \end{proof}

Now, using the Schauder Fixed Point Theorem, we will prove the
existence of a fixed point of $F$ which yields a solution of
(\ref{eq:020a})-(\ref{eq:021a}). For that, we consider the ball
$B_{r}=\{\widehat{\mathbf{u}}\in \textbf{X}_{0}:\left\Vert
\widehat{\mathbf{u}}\right\Vert _{\textbf{H}^{1}(\Omega)}\leq r
\}\subseteq \textbf{X}_{0}$, where $r$ is a positive constant such
that
$$r>C\!\left( \vert b\vert +(R+M)\!\left[\Vert \phi_{1}\Vert
_{H^{\frac{1}{2}}(\Gamma_{0}\backslash\{x_{3}=0\})}\!+\!\left(\Vert
\mathbf{u}_{\varepsilon}\Vert_{L^{4}(\Omega)}+1+B\right)\!\Vert
\theta_\delta\Vert_{H^{1}(\Omega)}\right]\!+\!\Vert
\nabla\mathbf{u}_{\varepsilon}\Vert
_{L^{2}(\Omega)}\!+\frac{1}{Pr}\Vert \mathbf{u}_{\varepsilon}\Vert
_{H^{1}(\Omega)}^{2}\!\right)\!.$$ It follows from
(\ref{eq:estu})-(\ref{eq:027}) that $F(B_{r})\subseteq B_{r},$
provided $\delta$ be small enough and $Pr$ large enough. Moreover
$F$ is completely continuous. This follows from the next inequality
\begin{equation}\label{tcomcon1}
\Vert F(\bar{\mathbf{u}}_{1}) - F(\bar{\mathbf{u}}_{2})
\Vert_{H^{1}(\Omega)} \leq K\Vert \bar{\mathbf{u}}_{1} -
\bar{\mathbf{u}}_{2}\Vert_{L^{4}(\Omega)},
\end{equation}
and from the compact embedding of
$\mathbf{H}^{1}(\Omega)\hookrightarrow \mathbf{L}^{4}(\Omega)$,
where
\begin{eqnarray*}
&K&\!\!\!\!\!=\frac{C}{Pr}\left(\Vert \widehat{\textbf{u}}_{2}\Vert
_{L^{4}(\Omega)}\!+\Vert \textbf{u}_{\varepsilon}\Vert
_{L^{4}(\Omega)}\right)\nonumber\\
&&+ C( M+ R) \! \left(\!\left\Vert \phi_{1}\right\Vert
_{H^{\frac{1}{2}}(\Gamma_{0}\backslash\{x_{3}=0\})}\!\!+\delta\Vert
\bar{\mathbf{u}}_{2}\Vert_{L^{4}(\Omega)}\!+\!(\Vert
\mathbf{u}_{\varepsilon}\Vert_{L^{4}(\Omega)}\!+\!1\!+\!B)\Vert
\theta_\delta\Vert_{H^{1}(\Omega)}\!\right),
 \end{eqnarray*}
and $C$ is a constant independent of $\bar{\mathbf{u}}_{1}$ and
$\bar{\mathbf{u}}_{2}$.

Let us prove (\ref{tcomcon1}). Let $\widehat{\theta}_{i}\in Y$ be
the solution of equation (\ref{eq:021al}) corresponding to
$\bar{\mathbf{u}}_{i} \in \mathbf{X}_{0}$ and set
$\widehat{\mathbf{u}}_{i}=F(\bar{\mathbf{u}}_{i})$, for $i=1,2$.
From (\ref{eq:020al}) and (\ref{eq:021al}) we obtain
\begin{eqnarray}\label{equ1}
&Pr\,a(\widehat{\textbf{u}}_{1}-\widehat{\textbf{u}}_{2},\textbf{v})&\!\!\!\!\!+c(\bar{\textbf{u}}_{1},\widehat{\textbf{u}}_{1}-\widehat{\textbf{u}}_{2},\textbf{v})+c(\bar{\textbf{u}}_{1}-\bar{\textbf{u}}_{2},\widehat{\textbf{u}}_{2},\textbf{v})=-c(\bar{\textbf{u}}_{1}-\bar{\textbf{u}}_{2},{\textbf{u}_\epsilon},\textbf{v})\nonumber\\
&&\!\!\!\!-c({\textbf{u}_\epsilon},\widehat{\textbf{u}}_{1}-\widehat{\textbf{u}}_{2},\textbf{v})-Pr\,
M\,
b_{1}(\widehat{\theta}_{1}-\widehat{\theta}_{2},\textbf{v})+\int_\Omega
Pr
R(\widehat{\theta}_{1}-\widehat{\theta}_{2})v_{3},\;\:\forall\textbf{v}\in
\mathbf{X}_{0},
\end{eqnarray}

\vspace{-0,7 cm}

\begin{eqnarray}\label{eqtheta1}
&a_{1}(\widehat{\theta}_{1}-\widehat{\theta}_{2},W)&\!\!\!\!\!+\left\langle
B(\widehat{\theta}_{1}-\widehat{\theta}_{2}),W\right\rangle
_{\Gamma_{1}}=-c_{1}(\bar{\textbf{u}}_{1},\widehat{\theta}_{1}-\widehat{\theta}_{2},W)\nonumber\\
&&-c_{1}(\bar{\textbf{u}}_{1}-\bar{\textbf{u}}_{2},\widehat{\theta}_{2},W)-c_{1}(\textbf{u}_{\varepsilon},\widehat{\theta}_{1}-\widehat{\theta}_{2},W)-c_{1}(\bar{\textbf{u}}_{1}-\bar{\textbf{u}}_{2},\theta_{\delta},W),\;\forall
W\in Y.
\end{eqnarray}
Setting  $W=\widehat{\theta}_{1}-\widehat{\theta}_{2}$ in
(\ref{eqtheta1}) and using (\ref{eq:altc1}), (\ref{hopf2}), the
H\"older inequality, the continuous embedding
$H^1(\Omega)\hookrightarrow L^4(\Omega)$ and the Poincar\'e
inequality, it is not difficult to obtain
\begin{equation}\label{eqtheta2}
\Vert \nabla(\widehat{\theta}_{1}-\widehat{\theta}_{2})\Vert
_{L^{2}(\Omega)} \leq C \left(\Vert \widehat{\theta}_{2}\Vert
_{H^{1}(\Omega)}+\Vert \theta_{\delta}\Vert
_{H^{1}(\Omega)}\right)\Vert
\bar{\textbf{u}}_{1}-\bar{\textbf{u}}_{2}\Vert _{L^{4}(\Omega)} .
 \end{equation}
Now, using (\ref{eq:027}) and the Poincar\'e inequality
(\ref{poincare1}), from (\ref{eqtheta2}) we obtain
\begin{equation}
\Vert \widehat{\theta}_{1}\!-\widehat{\theta}_{2}\Vert
_{H^{1}(\Omega)}\! \leq C \! \left(\!\left\Vert \phi_{1}\right\Vert
_{H^{\frac{1}{2}}(\Gamma_{0}\backslash\{x_{3}=0\})}\!\!+\delta\Vert
\bar{\mathbf{u}}_{2}\Vert_{L^{4}(\Omega)}\!+\!(\Vert
\mathbf{u}_{\varepsilon}\Vert_{L^{4}(\Omega)}\!+\!1\!+\!B)\Vert
\theta_\delta\Vert_{H^{1}(\Omega)}\!\right)\left\Vert
\bar{\textbf{u}}_{1}\!-\bar{\textbf{u}}_{2}\right\Vert
_{L^{4}(\Omega)}\!\!.\label{eq:036}
 \end{equation}
Setting
$\textbf{v}=\widehat{\textbf{u}}_{1}-\widehat{\textbf{u}}_{2}$ in
(\ref{equ1}) and using (\ref{eq:altc}), (\ref{hopf1}), the H\"older
inequality, the continuous embedding $H^1(\Omega)\hookrightarrow
L^2(\Omega)$ and the Poincar\'e inequality, we obtain
\begin{eqnarray}
Pr\left\Vert
\nabla(\widehat{\textbf{u}}_{1}\!-\widehat{\textbf{u}}_{2})\right\Vert
_{L^{2}(\Omega)}\! \leq (\Vert \widehat{\textbf{u}}_{2}\Vert
_{L^{4}(\Omega)}\!+\Vert \textbf{u}_{\varepsilon}\Vert
_{L^{4}(\Omega)})\!\left\Vert
\bar{\textbf{u}}_{1}\!-\bar{\textbf{u}}_{2}\right\Vert
_{L^{4}(\Omega)}\! + C(Pr M\!+\!Pr R)\Vert
\widehat{\theta}_{1}\!-\widehat{\theta}_{2}\Vert
_{H^{1}(\Omega)}.\label{mar1a}
 \end{eqnarray}
Then, using the Poincar\'e inequality, from (\ref{mar1a}) we get
\begin{equation}
\left\Vert
\widehat{\textbf{u}}_{1}-\widehat{\textbf{u}}_{2}\right\Vert
_{H^{1}(\Omega)}\leq C \left(\frac{1}{Pr}(\Vert
\widehat{\textbf{u}}_{2}\Vert _{L^{4}(\Omega)}+\Vert
\textbf{u}_{\varepsilon}\Vert _{L^{4}(\Omega)})\!\left\Vert
\bar{\textbf{u}}_{1}\!-\bar{\textbf{u}}_{2}\right\Vert
_{L^{4}(\Omega)} + (M+ R)\Vert
\widehat{\theta}_{1}\!-\widehat{\theta}_{2}\Vert
_{H^{1}(\Omega)}\right). \label{schu}
\end{equation}
Thus, (\ref{tcomcon1}) follows from (\ref{schu}) and (\ref{eq:036}).
Therefore, the Schauder Theorem implies that $F$ has a fixed point
$\widehat{\mathbf{u}}=F(\widehat{\textbf{u}})$. The field
$\widehat{\textbf{u}}$, together with the corresponding function
$\widehat{\theta}=\theta_{\widehat{\textbf{u}}}\in Y$ solving the
problem (\ref{eq:021al}) for
$\bar{\textbf{u}}=\widehat{\textbf{u}}$, is a solution to the
problem (\ref{eq:020a})-(\ref{eq:021a}). We collect this result in
the following theorem:
\begin{theorem} \label{tmaexistenciasolucion2}
Let ${\phi}_{1}\in{
H}^{\frac{1}{2}}(\Gamma_{0}\setminus\left\{x_{3}=0\right\})$,
$\phi_{2}\in H^{1/2}_{00}(\left\{x_{3}=0\right\})$,
$\mathbf{u}^{0}\in
\widetilde{\mathbf{H}}^{1/2}_{00}(\Gamma_{0}^{2})$ and
$\mathbf{g}\in \widetilde{\mathbf{H}}^{1/2}_{00}(\Gamma_{0}^{1})$.
Then there exists at least one solution $[\mathbf{u},\theta] \in
\widetilde{\mathbf{X}}\times H^1(\Omega)$ of problem
(\ref{eq:020})-(\ref{eq:022}) provided $Pr$ be large enough, and the
following estimate holds
\begin{equation}
\left\Vert \mathbf{u}\right\Vert_{H^1(\Omega)}+ \left\Vert
\theta\right\Vert_{H^1(\Omega)} \leq C\left(\Vert \mathbf{u}^{0}
\Vert_{H^{\frac{1}{2}}(\Gamma_{0}^{2})}+\Vert \mathbf{g}
\Vert_{H^{\frac{1}{2}}(\Gamma_{0}^{1})}+\left\Vert \phi_{1}
\right\Vert_{H^{\frac{1}{2}}(\Gamma_{0}\backslash\{x_{3}=0\})}+\left\Vert
\phi_{2}\right\Vert_{H^{\frac{1}{2}}(\{x_{3}=0\})}\right),\label{eq:estutheta}
\end{equation}
where the constant $C$ depends linearly of the parameters $M$, $B$
and $R$.
\end{theorem}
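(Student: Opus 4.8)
The plan is to assemble the constructions already carried out above into a single application of Schauder's fixed point theorem, then to reverse the reduction to homogeneous boundary data and read off the a priori bound \eqref{eq:estutheta}.

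First I would recall the two liftings. By the Hopf lemma there is a divergence-free $\mathbf{u}_\varepsilon\in\mathbf{H}^1(\Omega)$ with $\mathbf{u}_\varepsilon=\mathbf{g}$ on $\Gamma_0^1$, $\mathbf{u}_\varepsilon=\mathbf{u}^0$ on $\Gamma_0^2$, $\mathbf{u}_\varepsilon\cdot\mathbf{n}=0$ on $\Gamma_0\setminus\{x_3=0\}$, satisfying $\|\mathbf{u}_\varepsilon\|_{H^1(\Omega)}\le C(\|\mathbf{u}^0\|_{H^{1/2}(\Gamma_0^2)}+\|\mathbf{g}\|_{H^{1/2}(\Gamma_0^1)})$ and the smallness property $|c(\mathbf{v},\mathbf{u}_\varepsilon,\mathbf{v})|\le\varepsilon\|\mathbf{v}\|_{H^1}^2$; and, arguing as in \cite{Casas}, a lifting $\theta_\delta\in H^1(\Omega)$ of the Dirichlet/Robin/Neumann temperature data obeying \eqref{thetadeltaest}. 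Setting $\mathbf{u}=\mathbf{u}_\varepsilon+\widehat{\mathbf{u}}$, $\theta=\theta_\delta+\widehat\theta$, the pair $[\mathbf{u},\theta]$ solves \eqref{eq:020}--\eqref{eq:022} if and only if $[\widehat{\mathbf{u}},\widehat\theta]\in\mathbf{X}_0\times Y$ solves the homogeneous problem \eqref{eq:020a}--\eqref{eq:021a}. Lemma \ref{lemexissolpl} makes the map $F:\mathbf{X}_0\to\mathbf{X}_0$, $F(\bar{\mathbf{u}})=\widehat{\mathbf{u}}$, well defined through the linearized problem \eqref{eq:020al}--\eqref{eq:021al}, with the estimates \eqref{eq:estu}--\eqref{eq:027} at hand. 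I would then check, using exactly those estimates together with the choice of the radius $r$ made above, that $F(B_r)\subseteq B_r$ once $\delta$ is small and $Pr$ large; and, from the contraction-type inequality \eqref{tcomcon1} and the compact embedding $\mathbf{H}^1(\Omega)\hookrightarrow\mathbf{L}^4(\Omega)$, that $F:B_r\to B_r$ is completely continuous. Since $B_r$ is a nonempty bounded closed convex set in the Hilbert space $\mathbf{X}_0$, Schauder's theorem gives a fixed point $\widehat{\mathbf{u}}=F(\widehat{\mathbf{u}})$, which paired with the associated $\widehat\theta\in Y$ solving \eqref{eq:021al} yields a solution of \eqref{eq:020a}--\eqref{eq:021a}, hence a solution $[\mathbf{u},\theta]$ of \eqref{eq:020}--\eqref{eq:022}. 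To conclude $\mathbf{u}\in\widetilde{\mathbf{X}}$ I would just observe that $\widehat{\mathbf{u}}=0$ on $\Gamma_0$, so on $\Gamma_0\setminus\{x_3=0\}$ one has $\mathbf{u}\cdot\mathbf{n}=\mathbf{u}_\varepsilon\cdot\mathbf{n}=0$ by construction of $\mathbf{u}_\varepsilon$.

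For the estimate \eqref{eq:estutheta} I would evaluate \eqref{eq:estu} and \eqref{eq:027} at the fixed point (so $\bar{\mathbf{u}}=\widehat{\mathbf{u}}$), use $\|\widehat{\mathbf{u}}\|_{L^4}\le C\|\widehat{\mathbf{u}}\|_{H^1}$, $\|\theta_\delta\|_{L^4}\le\delta$ and the lifting bounds, obtaining a coupled pair of inequalities for $\|\widehat{\mathbf{u}}\|_{H^1}$ and $\|\widehat\theta\|_{H^1}$ whose ``source'' terms are all controlled by the right-hand side of \eqref{eq:estutheta}. Choosing $\delta$ small absorbs the $\delta\|\widehat{\mathbf{u}}\|$-term into the left side of the $\widehat\theta$-inequality, and choosing $Pr$ large absorbs the $(R+M)\|\widehat\theta\|_{H^1}$-term into the left side of the $\widehat{\mathbf{u}}$-inequality; this decouples the system and bounds $\|\widehat{\mathbf{u}}\|_{H^1}+\|\widehat\theta\|_{H^1}$ by the right-hand side of \eqref{eq:estutheta} with a constant depending linearly on $M$, $B$, $R$. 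Adding $\|\mathbf{u}_\varepsilon\|_{H^1}+\|\theta_\delta\|_{H^1}$, again bounded by the same data norms, gives \eqref{eq:estutheta}.

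The hard part will be the bookkeeping in this last estimate rather than any new idea: the coupling and convective terms in \eqref{eq:estu}--\eqref{eq:027} have to be absorbed in the right order --- first $\widehat\theta$, exploiting $\|\theta_\delta\|_{L^4}\le\delta$, then $\widehat{\mathbf{u}}$, exploiting the $1/Pr$ prefactors --- and one must track the dependence of the final constant on $M$, $B$, $R$ to keep it linear; the smallness of $\delta$ and largeness of $Pr$ are precisely what make both the invariance $F(B_r)\subseteq B_r$ and this absorption go through. Everything else reduces to H\"older's inequality, the Sobolev embedding $H^1\hookrightarrow L^4$ (valid since $\Omega\subset\mathbb{R}^3$), the generalized Poincar\'e inequality \eqref{poincare1}, and the compactness already used for \eqref{tcomcon1}.
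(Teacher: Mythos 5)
Your proposal is correct and follows essentially the same route as the paper: the Hopf-lemma lifting $\mathbf{u}_\varepsilon$ and the temperature lifting $\theta_\delta$, reduction to the homogeneous problem (\ref{eq:020a})--(\ref{eq:021a}), the map $F$ defined through the linearized system and Lemma \ref{lemexissolpl}, the invariance $F(B_r)\subseteq B_r$ for $\delta$ small and $Pr$ large, complete continuity via (\ref{tcomcon1}) and the compact embedding $\mathbf{H}^1(\Omega)\hookrightarrow\mathbf{L}^4(\Omega)$, and Schauder's theorem, with (\ref{eq:estutheta}) then read off from (\ref{eq:estu})--(\ref{eq:027}) and the lifting bounds. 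No substantive deviation from the paper's argument.
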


\subsection{Uniqueness of the Weak Solutions}

\hspace{0.4cm}The purpose of this section is to determine conditions on the
boundary data and parameters which guarantee the uniqueness of the
weak solution $[\textbf{u},\theta]\in \widetilde{\textbf{X}}\times
H^1(\Omega)$ to the problem (\ref{eq:020})-(\ref{eq:022}). For that,
suppose that there exist
$[\textbf{u}_{1},\theta_{1}],[\textbf{u}_{2},\theta_{2}]\in
\widetilde{\textbf{X}}\times H^1(\Omega)$ weak solutions of system
(\ref{eq:020})-(\ref{eq:022}). Then, defining
$\textbf{u}=\textbf{u}_{1}-\textbf{u}_{2}$ and
$\theta=\theta_{1}-\theta_{2}$, we obtain that
$[\textbf{u},\theta]\in \textbf{X}_{0}\times Y$ solves the system
\begin{eqnarray}
Pr\, a(\textbf{u},\textbf{v})+Pr\, M\, b_{1}(\theta,\textbf{v})+c(\textbf{u},\textbf{u}_{1},\textbf{v})+c(\textbf{u}_{2},\textbf{u},\textbf{v})= \int_\Omega Pr\; R\; \theta v_{3}\: d\Omega ,\;\forall\textbf{v}\in \mathbf{X}_{0},\label{unsol7}\\
c_{1}(\textbf{u},\theta_{1},W)+c_{1}(\textbf{u}_{2},\theta,W)+a_{1}(\theta,W)+\left\langle
B\theta,W\right\rangle _{\Gamma_{1}}=0,\;\forall W\in Y.
\label{unsol8}
\end{eqnarray}
Proceeding as in Lemma \ref{lemalt}, we can easily prove that if
$\textbf{u}_{2}\in \widetilde{\textbf{X}}$, $\textbf{u}\in
\textbf{X}_{0}$ and $\theta\in Y$, then
$c(\textbf{u}_{2},\textbf{u},\textbf{u})=0$ and
$c_{1}(\textbf{u}_{2},\theta,\theta)=0$. Thus, setting
$\textbf{v}=\textbf{u}$ in (\ref{unsol7}), $W=\theta$ in
(\ref{unsol8}), and using the H\"older inequality, Sobolev
embeddings and the Poincar\'e inequality (\ref{poincare1}), we
deduce
\begin{equation} \label{unsol12}
Pr \Vert \nabla\textbf{u} \Vert_{L^2(\Omega)}\leq  Pr M \Vert \nabla \theta\Vert_{L^2(\Omega)} + C\Vert \nabla\textbf{u} \Vert_{L^2(\Omega)}\Vert  \nabla\textbf{u}_{1} \Vert_{L^2(\Omega)}+ C Pr R  \Vert  \nabla\theta\Vert_{L^2(\Omega)},
 \end{equation}
\begin{equation} \label{unsol13}
 \Vert \nabla\theta \Vert_{L^2(\Omega)}
  \leq C\Vert \nabla\textbf{u} \Vert_{L^2(\Omega)}\Vert \nabla \theta_{1}\Vert_{L^2(\Omega)}.
 \end{equation}
Using (\ref{unsol13}) in (\ref{unsol12}), we find
\begin{equation*} \label{unsol14}
Pr \Vert \nabla\textbf{u} \Vert_{L^2(\Omega)} \leq C(\Vert
\nabla\textbf{u}_{1} \Vert_{L^2(\Omega)}+ (Pr M+Pr R) \Vert \nabla
\theta_{1}\Vert_{L^2(\Omega)} )\Vert \nabla\textbf{u}
\Vert_{L^2(\Omega)}.
 \end{equation*}
Now, taking into account that $[\textbf{u}_{1},\theta_{1}]$ and
$[\textbf{u}_{2},\theta_{2}]$ are weak solutions to the problem
(\ref{eq:020})-(\ref{eq:022}), then from Theorem
\ref{tmaexistenciasolucion2}, we have that
$[\textbf{u}_{1},\theta_{1}]$ and $[\textbf{u}_{2},\theta_{2}]$
satisfy the estimate (\ref{eq:estutheta}), which imply that
\begin{equation*}
Pr \Vert \nabla\textbf{u} \Vert_{L^2(\Omega)}\! \leq C(Pr (M +
R)+1)\!\!\left[\!\Vert \mathbf{u}^{0}
\Vert_{H^{\frac{1}{2}}(\Gamma_{0}^{2})}\!\!+\!\Vert \mathbf{g}
\Vert_{H^{\frac{1}{2}}(\Gamma_{0}^{1})}\!\!+\!\left\Vert \phi_{1}
\right\Vert_{H^{\frac{1}{2}}(\Gamma_{0}\backslash\{x_{3}=0\})}\!\!+\!\left\Vert
\phi_{2}\right\Vert_{H^{\frac{1}{2}}(\{x_{3}=0\})}\!\right]\!\!\Vert
\nabla\textbf{u} \Vert_{L^2(\Omega)},
 \end{equation*}
where the constant $C$ depends almost linearly of the parameters
$M$, $B$ y $R$. Therefore, if the condition
\begin{equation}\label{condpeq}
Pr - C(Pr (M + R) + 1 )\left[\Vert \mathbf{u}^{0}
\Vert_{H^{\frac{1}{2}}(\Gamma_{0}^{2})}\!+\!\Vert \mathbf{g}
\Vert_{H^{\frac{1}{2}}(\Gamma_{0}^{1})}\!+\!\left\Vert \phi_{1}
\right\Vert_{H^{\frac{1}{2}}(\Gamma_{0}\backslash\{x_{3}=0\})}\!+\!\left\Vert
\phi_{2}\right\Vert_{H^{\frac{1}{2}}(\{x_{3}=0\})}\right]> 0
 \end{equation}
is satisfied, we conclude that $\Vert \nabla\textbf{u} \Vert_{L^2(\Omega)}=0$, and consequently $\textbf{u}=0$, which implies that $\textbf{u}_{1}=\textbf{u}_{2}$. Moreover, using this fact in (\ref{unsol13}), we obtain that $\Vert \nabla\theta \Vert_{L^2(\Omega)}=0$, and consequently $\theta=0$, which implies that $\theta_{1}=\theta_{2}$. Thus we have proved the following theorem:

\begin{theorem}\label{tmasolunica}
Let ${\phi}_{1}\in H^{\frac{1}{2}}(\Gamma_{0}\setminus\left\{x_{3}=0\right\})$, $\phi_{2}\in H^{1/2}_{00}(\left\{x_{3}=0\right\})$  $\mathbf{u}^{0}\in \widetilde{\mathbf{H}}^{1/2}_{00}(\Gamma_{0}^{2})$ and $\mathbf{g}\in \widetilde{\mathbf{H}}^{1/2}_{00}(\Gamma_{0}^{1})$. If the condition (\ref{condpeq}) is satisfied, then the problem  (\ref{eq:020})-(\ref{eq:022}) has a unique solution $[\mathbf{u},\theta]\in \widetilde{\mathbf{X}}\times H^1(\Omega)$. Moreover, the solution $[\mathbf{u},\theta]$ satisfies the estimate (\ref{eq:estutheta}).
\end{theorem}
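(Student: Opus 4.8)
The plan is to argue directly that any two weak solutions must coincide. Suppose $[\mathbf{u}_1,\theta_1]$ and $[\mathbf{u}_2,\theta_2]$ in $\widetilde{\mathbf{X}}\times H^1(\Omega)$ both solve (\ref{eq:020})-(\ref{eq:022}) for the same data $\mathbf{g},\mathbf{u}^0,\phi_1,\phi_2$, and set $\mathbf{u}=\mathbf{u}_1-\mathbf{u}_2$, $\theta=\theta_1-\theta_2$. Since the boundary data cancel, $[\mathbf{u},\theta]\in\mathbf{X}_0\times Y$. Subtracting the two copies of (\ref{eq:020}) and of (\ref{eq:021}), and splitting the quadratic terms via $c(\mathbf{u}_1,\mathbf{u}_1,\mathbf{v})-c(\mathbf{u}_2,\mathbf{u}_2,\mathbf{v})=c(\mathbf{u},\mathbf{u}_1,\mathbf{v})+c(\mathbf{u}_2,\mathbf{u},\mathbf{v})$ and similarly $c_1(\mathbf{u}_1,\theta_1,W)-c_1(\mathbf{u}_2,\theta_2,W)=c_1(\mathbf{u},\theta_1,W)+c_1(\mathbf{u}_2,\theta,W)$, together with the cancellation of the constant $b$ in the buoyancy term, yields exactly the linear difference system (\ref{unsol7})-(\ref{unsol8}).

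Next I would test (\ref{unsol7}) with $\mathbf{v}=\mathbf{u}$ and (\ref{unsol8}) with $W=\theta$. The structural point is that $\mathbf{u}_2\in\widetilde{\mathbf{X}}$ implies $\mathbf{u}_2\cdot\mathbf{n}=0$ on \emph{all} of $\partial\Omega$, so by the integration-by-parts argument of Lemma \ref{lemalt} one has $c(\mathbf{u}_2,\mathbf{u},\mathbf{u})=0$ and $c_1(\mathbf{u}_2,\theta,\theta)=0$; the genuinely quadratic self-terms thus drop out. What remains on the right-hand sides are the Marangoni coupling $Pr\,M\,b_1(\theta,\mathbf{u})$, the buoyancy term $Pr\,R\int_\Omega\theta u_3\,d\Omega$, and the cross terms $c(\mathbf{u},\mathbf{u}_1,\mathbf{u})$ and $c_1(\mathbf{u},\theta_1,\theta)$. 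Estimating these by Hölder's inequality, the embedding $H^1(\Omega)\hookrightarrow L^4(\Omega)$ (valid since $\Omega\subset\mathbb{R}^3$), and the generalized Poincaré inequality (\ref{poincare1}) — applicable on $\mathbf{X}_0$ and $Y$ because their elements vanish on a portion of $\partial\Omega$ of positive measure — produces the two coupled bounds (\ref{unsol12}) and (\ref{unsol13}) with constants depending linearly on $M,B,R$.

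Finally, I would substitute (\ref{unsol13}) into (\ref{unsol12}) to decouple, obtaining $Pr\,\Vert\nabla\mathbf{u}\Vert_{L^2(\Omega)}\le C\bigl(\Vert\nabla\mathbf{u}_1\Vert_{L^2(\Omega)}+Pr(M+R)\Vert\nabla\theta_1\Vert_{L^2(\Omega)}\bigr)\Vert\nabla\mathbf{u}\Vert_{L^2(\Omega)}$, and then invoke Theorem \ref{tmaexistenciasolucion2}, which guarantees that the weak solution $[\mathbf{u}_1,\theta_1]$ satisfies the a priori estimate (\ref{eq:estutheta}). Feeding that bound into the last inequality makes the coefficient of $\Vert\nabla\mathbf{u}\Vert_{L^2(\Omega)}$ on the right precisely the expression subtracted in (\ref{condpeq}); hence when (\ref{condpeq}) holds this coefficient is strictly smaller than $Pr$, forcing $\Vert\nabla\mathbf{u}\Vert_{L^2(\Omega)}=0$, so by (\ref{poincare1}) $\mathbf{u}=0$ and $\mathbf{u}_1=\mathbf{u}_2$. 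Inserting $\mathbf{u}=0$ into (\ref{unsol13}) gives $\nabla\theta=0$, whence $\theta=0$ and $\theta_1=\theta_2$; the estimate (\ref{eq:estutheta}) for the now-unique solution is inherited from Theorem \ref{tmaexistenciasolucion2}.

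The work is mostly bookkeeping once Theorem \ref{tmaexistenciasolucion2} and Lemma \ref{lemalt} are available; the one delicate point I expect to watch carefully is tracking the dependence of the constant $C$ on $M,B,R$ through the Hölder/Sobolev/Poincaré chain so that the smallness threshold obtained is exactly the one recorded in (\ref{condpeq}), and on the technical side justifying the vanishing of the convective self-terms for elements of $\widetilde{\mathbf{X}}$ (not merely $\mathbf{X}_0$), which is precisely why the solution class is $\widetilde{\mathbf{X}}\times H^1(\Omega)$.
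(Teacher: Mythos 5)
Your argument coincides with the paper's own proof: the same difference system (\ref{unsol7})--(\ref{unsol8}), the same energy tests $\mathbf{v}=\mathbf{u}$, $W=\theta$ with the vanishing of $c(\mathbf{u}_2,\mathbf{u},\mathbf{u})$ and $c_1(\mathbf{u}_2,\theta,\theta)$, the coupled bounds (\ref{unsol12})--(\ref{unsol13}), and the a priori estimate (\ref{eq:estutheta}) fed into the decoupled inequality so that (\ref{condpeq}) forces $\mathbf{u}=0$ and then $\theta=0$. The only minor imprecision is your claim that $\mathbf{u}_2\in\widetilde{\mathbf{X}}$ gives $\mathbf{u}_2\cdot\mathbf{n}=0$ on all of $\partial\Omega$ (this need not hold on $\{x_{3}=0\}$), but it is harmless: in the integration by parts the boundary contribution on $\{x_{3}=0\}$ vanishes because $\mathbf{u}\in\mathbf{X}_0$ and $\theta\in Y$ vanish there, so the two cancellation identities hold exactly as in the paper.
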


\begin{remark}\label{obssolun}
Observe that the condition
\begin{equation*}
Pr - C(Pr (M +R) +1)\left[\Vert \mathbf{u}^{0}
\Vert_{H^{\frac{1}{2}}(\Gamma_{0}^{2})}\!+\!\Vert \mathbf{g}
\Vert_{H^{\frac{1}{2}}(\Gamma_{0}^{1})}\!+\!\left\Vert \phi_{1}
\right\Vert_{H^{\frac{1}{2}}(\Gamma_{0}\backslash\{x_{3}=0\})}+\!\left\Vert
\phi_{2}\right\Vert_{H^{\frac{1}{2}}(\{x_{3}=0\})}\right]> 0
\end{equation*}
is verified if either the functions $\mathbf{u}^0$, $\mathbf{g}$,
$\phi_{1}$ and $\phi_{2}$ are small, or if the coefficients $M$, $R$
and $B$ are small. In particular, for small values of $M$, $R$ and
$B$ and boundary data $\mathbf{u}^{0}=0$, $\mathbf{g}=0$,
$\phi_{1}=0$ and $\phi_{2}=\theta_{c}$, the basic solution
$[\mathbf{u}_{b},\theta_{b},p_{b}]$ given by (\ref{estadobasico}) is
unique.
\end{remark}

\subsection{Regularity}

\hspace{0.4cm}In Subsection \ref{solweak} was demonstrated the existence of a weak
solution $[\textbf{u},\theta]\in \widetilde{\mathbf{X}}\times
H^1(\Omega)$ to the problem (\ref{eq:020})-(\ref{eq:022}); however,
taking into account the tangential and normal derivatives of the
temperature at the boundary, we need to prove that $\theta\in
H^2(\Omega)$ (see Lemma \ref{lempardo}). In this subsection we
analyze the following regularity problem for the weak solution
$\theta\in H^1(\Omega)$: Given $\textbf{u}\in
\widetilde{\textbf{X}}$, find $\theta \in H^2(\Omega)$ such that
\begin{align} \label{problema1}
 \begin{cases}
 -\Delta\theta=- (\textbf{u}\cdot\nabla)\theta  & \text{ in  } \Omega,\\
  \frac{\partial\theta}{\partial\textbf{n}}+B\theta=0 & \text{ on } \Gamma_{1},\\
  \frac{\partial\theta}{\partial\textbf{n} }=\phi_{1} & \text{ on } \Gamma_{2},\\
 \theta=\phi_{2} & \text{ on } \Gamma_{3},
 \end{cases}
\end{align}
where $\Gamma_{1}:=\{x_{3}=1\}$, $\Gamma_{3}:=\{x_{3}=0\}$ and $\Gamma_{2}:=\partial\Omega\setminus \{\Gamma_{1}\cup\Gamma_{3}\}$ (see Figure \ref{fronteraomega1}).
\begin{center}
\psset{unit=0.60cm,linewidth=0.8pt}
\begin{pspicture}(0,-0.75)(6.5,6.0)
\pspolygon[linestyle=dashed,dash=5pt 5pt,linecolor=black,fillcolor=gray,fillstyle=solid,opacity=1](0,0)(4,0)(6.22,1.6)(2.22,1.6)
\pspolygon[linecolor=black,fillcolor=gray,fillstyle=solid,opacity=0.3](0,4)(4,4)(4,0)(0,0)
\pspolygon[linecolor=black,fillcolor=gray,fillstyle=solid,opacity=0.3](4,4)(6.22,5.6)(6.22,1.6)(4,0)
\pspolygon[linecolor=black,fillcolor=gray,fillstyle=solid,opacity=0.6](2.22,5.62)(6.22,5.6)(4,4)(0,4)
\psline[linestyle=dashed,dash=5pt 5pt](2.22,1.6)(2.22,5.6)
\rput[tl](3,5.24){$\Gamma_1$}
\rput[tl](4.86,3.14){$\Gamma_2$}
\rput[tl](4.32,-0.12){$\Gamma_3$}
\psarc[linewidth=0.3pt,arrowsize=2.5pt 2.0]{<-}(4.25,1){1.25}{180}{270}
\end{pspicture}
\\[-2mm]
\figcaption{Representation of $\partial\Omega$.} \label{fronteraomega1}
\end{center}

In this subsection. we will use the following space
\begin{equation*}
H^{3/2}_{00}(\Gamma)=\{v\in L^{2}(\Gamma): \mbox{ there exists } g\in H^{\frac{3}{2}}(\partial\Omega), \ \left. g\right|_{\Gamma}=v, \ \left. g\right|_{\partial\Omega \setminus \Gamma}=0 \}.
\end{equation*}

\begin{theorem} \label{lemregularidad1}
Let $\phi_{1} \in H^{\frac{1}{2}}(\Gamma_{2})$, $\phi_{2} \in H^{3/2}_{00}(\Gamma_{3})$ and $f \in L^p(\Omega)$ with $\frac{6}{5}<p\leq 2$. Then, the system
\begin{align} \label{problema1.2}
 \begin{cases}
 -\Delta\theta=f  & \text{ in  } \Omega,\\
 \frac{\partial\theta}{\partial\mathbf{n}}+B\theta=0 & \text{ on } \Gamma_{1},\\
  \frac{\partial\theta}{\partial\mathbf{n} }=\phi_{1} & \text{ on } \Gamma_{2},\\
 \theta=\phi_{2} & \text{ on } \Gamma_{3},
 \end{cases}
\end{align}
has a solution $\theta \in W^{2,p}(\Omega).$
\end{theorem}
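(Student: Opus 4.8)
The plan is to argue in three stages: reduce to fully homogeneous boundary conditions by subtracting a regular lift of the data; solve the reduced problem weakly in $H^{1}(\Omega)$ via Lax--Milgram; and bootstrap the weak solution up to $W^{2,p}(\Omega)$ by localizing near the faces, edges and vertices of the box $\Omega$.

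\emph{Reduction and weak solvability.} Since $\phi_{2}\in H^{3/2}_{00}(\Gamma_{3})$ extends by zero to an element of $H^{3/2}(\partial\Omega)$, and $\phi_{1}\in H^{1/2}(\Gamma_{2})$ is the restriction to $\Gamma_{2}$ of an element of $H^{1/2}(\partial\Omega)$, and since along the one-dimensional edges these spaces (exponent $\le 1/2$) carry no intrinsic traces, no compatibility relations between the prescribed data occur at the edges; adapting the lifting arguments of \cite{Ziane} (in the spirit of the construction of $\theta_{\delta}$ in Subsection \ref{solweak}, but keeping $H^{2}$-regularity) one obtains $\mathcal{R}\in H^{2}(\Omega)$ with $\mathcal{R}=\phi_{2}$ on $\Gamma_{3}$, $\frac{\partial\mathcal{R}}{\partial\mathbf{n}}=\phi_{1}$ on $\Gamma_{2}$ and $\frac{\partial\mathcal{R}}{\partial\mathbf{n}}+B\mathcal{R}=0$ on $\Gamma_{1}$. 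Setting $\theta=w+\mathcal{R}$, problem (\ref{problema1.2}) reduces to $-\Delta w=F:=f+\Delta\mathcal{R}\in L^{p}(\Omega)$ with $\frac{\partial w}{\partial\mathbf{n}}+Bw=0$ on $\Gamma_{1}$, $\frac{\partial w}{\partial\mathbf{n}}=0$ on $\Gamma_{2}$, $w=0$ on $\Gamma_{3}$; and it suffices to find $w\in W^{2,p}(\Omega)$, since then $\theta=w+\mathcal{R}\in W^{2,p}(\Omega)$ because $H^{2}(\Omega)\hookrightarrow W^{2,p}(\Omega)$ for $p\le2$. On $V=\{v\in H^{1}(\Omega):v=0\text{ on }\Gamma_{3}\}$ the bilinear form $\mathcal{B}(w,v)=\int_{\Omega}\nabla w\cdot\nabla v\,d\Omega+B\langle w,v\rangle_{\Gamma_{1}}$ is continuous and, by the generalized Poincar\'e inequality (\ref{poincare1}) with $\Sigma=\Gamma_{3}$, coercive; since $p>6/5$ we have $H^{1}(\Omega)\hookrightarrow L^{p/(p-1)}(\Omega)$, so $v\mapsto\int_{\Omega}Fv\,d\Omega$ is bounded on $V$, and Lax--Milgram yields a unique weak solution $w\in V$.

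\emph{Interior and boundary regularity.} Cover $\overline{\Omega}$ by finitely many open sets, each meeting $\partial\Omega$ either not at all, or in a relatively open part of one face, or at one edge, or at one vertex, with a subordinate partition of unity $\{\chi_{j}\}$ chosen symmetric under the reflections used below. Each $w_{j}=\chi_{j}w\in V$ solves $-\Delta w_{j}=\chi_{j}F-2\nabla\chi_{j}\cdot\nabla w-(\Delta\chi_{j})w=:F_{j}$ with $F_{j}\in L^{p}(\Omega)$, because $w\in H^{1}(\Omega)$ gives $w,\nabla w\in L^{2}(\Omega)\subseteq L^{p}(\Omega)$ (as $p\le2$), and $w_{j}$ inherits the relevant homogeneous conditions. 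For interior pieces and pieces touching a single face, classical interior, Dirichlet, Neumann or Robin $L^{p}$-estimates in a half-space (the Robin case being a lower-order perturbation of the Neumann one) give $w_{j}\in W^{2,p}(\Omega)$. On a piece touching an edge, two perpendicular faces occur, one of which belongs to $\Gamma_{2}$; extending $w_{j}$ by even reflection across that $\Gamma_{2}$-face produces a weak solution of $-\Delta\widetilde{w}_{j}=\widetilde{F}_{j}\in L^{p}$ in a flat half-space with a single homogeneous condition on the remaining boundary (Dirichlet, Neumann or Robin according to the other face), so that $L^{p}$-regularity applies; equivalently, one invokes directly the regularity results for $-\Delta$ with homogeneous Dirichlet--Neumann conditions in corner domains of \cite{Dauge,Dauge2,Grisvard}, the relevant dihedral angles being $\pi/2$. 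On a piece touching a vertex, three perpendicular faces meet, two belonging to $\Gamma_{2}$; two successive even reflections across those Neumann faces reduce the problem to the Dirichlet (bottom vertices) or Robin (top vertices) problem for $-\Delta$ with $L^{p}$ right-hand side in a half-space, giving $w_{j}\in W^{2,p}(\Omega)$. Summing, $w=\sum_{j}w_{j}\in W^{2,p}(\Omega)$, hence $\theta=w+\mathcal{R}\in W^{2,p}(\Omega)$ solves (\ref{problema1.2}).

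\emph{Main difficulty.} The delicate point is the regularity across the edges $\Gamma_{3}\cap\Gamma_{2}$ (a Dirichlet face meeting a Neumann face) and across the bottom vertices (a Dirichlet face meeting two Neumann faces), which are precisely the configurations where $W^{2,p}$-regularity can fail at obtuse openings; it is saved here by the right angles of the box, which make the reflections above legitimate (equivalently, the relevant corner singularity exponents are $\ge1$ with the borderline mode a polynomial). Arranging these reflections so as to produce genuine weak solutions is what forces the preliminary reduction to homogeneous data, and the construction of the $H^{2}$-regular lift $\mathcal{R}$ carrying mixed Dirichlet/Neumann/Robin traces is the chief technical ingredient of the first stage.
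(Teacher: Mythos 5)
Your overall skeleton is sound, and your second stage is a genuinely different route from the paper's: instead of quoting the mixed Dirichlet--Neumann regularity theorem of \cite{Dauge,Dauge2} (Theorem \ref{tmadauge1} in the paper), you re-prove $W^{2,p}$-regularity of the fully homogeneous problem by localization and even reflections across the Neumann faces, exploiting that every edge and vertex of the box has opening $\pi/2$ and that the Dirichlet face $\Gamma_{3}$ only ever meets Neumann faces. That argument is legitimate (with the usual care that cut-offs have vanishing normal derivative on the boundary, as you note, and that even reflection of an $H^{1}$ weak solution across a homogeneous Neumann face yields a weak solution in the doubled domain), and it buys self-containedness at the price of redoing by hand what \cite{Dauge,Dauge2,Grisvard} give directly; the paper's use of Theorem \ref{tmadauge1} is shorter but is restricted to homogeneous boundary data, which is exactly why the paper must work harder elsewhere.

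The gap is in your first stage. You assert, in one sentence, the existence of a single lift $\mathcal{R}\in H^{2}(\Omega)$ with $\mathcal{R}=\phi_{2}$ on $\Gamma_{3}$, $\partial\mathcal{R}/\partial\mathbf{n}=\phi_{1}$ on $\Gamma_{2}$ and the homogeneous Robin condition satisfied exactly on $\Gamma_{1}$, ``adapting the lifting arguments of \cite{Ziane}''. But this is precisely the technical heart of the matter, and it is what the paper's proof actually spends most of its length constructing: there is no standard trace/lifting theorem to invoke on a polyhedron, and the paper instead (i) removes the Robin condition by the explicit multiplier $\eta(x_{3})=\exp[B(2x_{3}-\tfrac12 x_{3}^{2})]$, (ii) lifts only the Dirichlet datum $\phi_{2}$ (via its extension by zero in $H^{3/2}(\partial\Omega)$) to some $\Phi_{2}\in H^{2}(\Omega)$, and (iii) handles the remaining nonhomogeneous Neumann data face by face through the heat-equation construction of \cite{Ziane} (the auxiliary $\psi_{1}$ solving (\ref{problema3.3.1.1}), the explicit $H^{2}$ function $T_{1}$ matching the Neumann data of (\ref{problema3.2.i}), and the correction $\tilde T_{1}$ obtained from Theorem \ref{tmadauge1}). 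Your claim is in fact stronger than what the paper proves at this step, since you additionally require the lift to satisfy the Robin condition on $\Gamma_{1}$ exactly, a constraint you neither remove (as the paper does with $\eta$) nor construct; and the heuristic that no edge compatibility conditions arise because $H^{1/2}$ of a face has no trace on an edge is a plausibility argument, not a construction. As written, the crux of the theorem is assumed rather than proved; to close it you should either carry out the Ziane-type construction of the Neumann lift (as the paper does), or drop the Robin requirement from $\mathcal{R}$, treat the resulting inhomogeneous Robin datum on $\Gamma_{1}$ explicitly, and verify your reflection argument still applies there.
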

\begin{proof}
We first convert the problem (\ref{problema1.2}) with Robin, Neumann
and Dirichlet conditions, in a boundary problem with only Dirichlet
and Neumann conditions. For that, we will adapt the ideas of
\cite{Ziane}, Section 2. First, we consider the functions
$\eta(x_{3})$ and $\tilde{\theta}$ defined by: \vspace{-0,1 cm}
\begin{equation*}
\eta(x_{3})=\text{exp}[B(2x_{3}-\frac{1}{2}x_{3}^2)] \qquad \text{ and } \qquad \tilde{\theta}=\eta\theta.
\end{equation*}
Since $B$ is constant, it is easy to check that problem
(\ref{problema1.2}) is equivalent to find $\tilde{\theta} \in
W^{2,p}(\Omega)$, such that
\begin{align} \label{problema2}
 \begin{cases}
 -\Delta\tilde{\theta}=\tilde{f}  & \text{ in  } \Omega,\\
  \frac{\partial\tilde{\theta}}{\partial\textbf{n}}=0 & \text{ on } \Gamma_{1},\\
  \frac{\partial\tilde{\theta}}{\partial\textbf{n}}=\tilde{\phi}_{1} & \text{ on } \Gamma_{2},\\
 \tilde{\theta}=\phi_{2} & \text{ on } \Gamma_{3},
 \end{cases}
\end{align}
where $\tilde{f}=-\theta\eta''-2\frac{\partial\theta}{\partial
x_{3}}\eta'+\eta f $ and $\tilde{\phi}_{1}=\eta\phi_{1}$. Taking
into account that $\phi_{2}\in H^{3/2}_{00}(\Gamma_{3})$, we
consider the function $\widetilde{\phi}_{2}\in
H^{\frac{3}{2}}(\partial\Omega)$ such that
$\widetilde{\phi}_{2}=\phi_{2}$ on $\Gamma_{3}$ and
$\widetilde{\phi}_{2}=0$ on $\partial\Omega\backslash\Gamma_{3}$. By
the lifting Theorem, we have that exists $\Phi_{2} \in H^2(\Omega)$
such that
$\left.\Phi_{2}\right|_{\partial\Omega}=\widetilde{\phi}_{2}$.
Considering $\hat{\theta}=\tilde{\theta}-\Phi_{2}$, it is not
difficult to verify that problem (\ref{problema2}) is equivalent to
find $\hat{\theta} \in W^{2,p}(\Omega)$, such that
\begin{align} \label{problema3}
 \begin{cases}
 -\Delta\hat{\theta}=\hat{f}  & \text{ in  } \Omega,\\
 \frac{\partial\hat{\theta}}{\partial\textbf{n}}=\phi_{3} & \text{ on } \Gamma_{1},\\
  \frac{\partial\hat{\theta}}{\partial\textbf{n}}=\hat{\phi}_{1} & \text{ on } \Gamma_{2},\\
 \hat{\theta}=0 & \text{ on } \Gamma_{3},
 \end{cases}
\end{align}
where $\hat{f}=\tilde{f}+\Delta\Phi_{2}$, $\phi_{3}=-\left.\frac{\partial\Phi_{2}}{\partial\textbf{n}}\right|_{\Gamma_{1}}$ and $\hat{\phi}_{1}=\tilde{\phi}_{1}-\left.\frac{\partial\Phi_{2}}{\partial\textbf{n}}\right|_{\Gamma_{2}}$. In order to find the solution  $\hat{\theta}$ of problem (\ref{problema3}), we decompose $\hat{\theta}$ as the sum $\hat{\theta}=\theta_{1}+\theta_{2}+\theta_{3}$, where $\theta_{1}$, $\theta_{2}$ and $\theta_{3}$ solve respectively the following problems:
\begin{align} \label{problema3.1}
 \begin{cases}
 -\Delta\theta_{1}=\hat{f}  & \text{ in  } \Omega,\\
 \frac{\partial\theta_{1}}{\partial\textbf{n}}=0 & \text{ on } \Gamma_{1},\\
  \frac{\partial\theta_{1}}{\partial\textbf{n}}=0 & \text{ on } \Gamma_{2},\\
 \theta_{1}=0 & \text{ on } \Gamma_{3},
 \end{cases}
\end{align}
\begin{align} \label{problema3.2}
 \begin{cases}
 -\Delta\theta_{2}=0  & \text{ in  } \Omega,\\
 \frac{\partial\theta_{2}}{\partial\textbf{n}}=0 & \text{ on } \Gamma_{1},\\
  \frac{\partial\theta_{2}}{\partial\textbf{n}}=\hat{\phi}_{1} & \text{ on } \Gamma_{2},\\
 \theta_{2}=0 & \text{ on } \Gamma_{3},
 \end{cases}
\end{align}
\begin{align} \label{problema3.3}
 \begin{cases}
 -\Delta\theta_{3}=0  & \text{ in  } \Omega,\\
  \frac{\partial\theta_{3}}{\partial\textbf{n}}=\phi_{3} & \text{ on } \Gamma_{1},\\
  \frac{\partial\theta_{3}}{\partial\textbf{n}}=0 & \text{ on } \Gamma_{2},\\
 \theta_{3}=0 & \text{ on } \Gamma_{3}.
 \end{cases}
\end{align}
In order to prove the existence of
$\theta_{1},\theta_{2},\theta_{3}\in W^{2,p}(\Omega)$, we require
the following preliminary result whose proof follows from Theorem
$1$ in \cite{Dauge} (see also \cite{Dauge2}).
\begin{theorem} \label{tmadauge1}
If $F\in L^q(\Omega)$ with $\frac{6}{5}<q<\infty$, then the weak
solution to the problem
\begin{align}
 \begin{cases}
 -\Delta\omega=F  & \text{ in  } \Omega,\nonumber\\
 \frac{\partial\omega}{\partial\mathbf{n}}=0 & \text{ on } \Gamma_{1}\cup \Gamma_{2},\nonumber\\
 \omega=0 & \text{ on } \Gamma_{3},\nonumber
 \end{cases}
\end{align}
belongs to the space $W^{2,q}(\Omega)$.
\end{theorem}
Thus, by Theorem \ref{tmadauge1}, if $\hat{f}\in L^p(\Omega)$ with $\frac{6}{5}<p\leq\ 2$, then the system (\ref{problema3.1}) has solution $\theta_{1}\in W^{2,p}(\Omega)$. We remember that  $\hat{f}=-\theta\eta''-2\frac{\partial\theta}{\partial x_{3}}\eta'+\eta f+\Delta\Phi_{2}$. Observe that as $\theta \in H^1(\Omega)\hookrightarrow L^2(\Omega)$ then $\frac{\partial\theta}{\partial x_{3}} \in L^2(\Omega)$. Moreover since $\eta(x_{3})=\text{exp}[B(2x_{3}-\frac{1}{2}x_{3}^2)]$, then $\eta'=B(2-x_{3})\eta$ and $\eta''=-B\eta +B^{2}(2-x_{3})^{2}\eta$. Recalling that $0\leq x_{3}\leq 1$, we deduce that $\eta$, $\eta'$ y $\eta''$ belong to $L^2(\Omega)$. Finally, as $\Phi_{2}\in H^2(\Omega)$, $\Delta\Phi_{2} \in L^2(\Omega)$, and as by initial hypothesis $f\in L^p(\Omega)$ with $\frac{6}{5}<p\leq 2$, we conclude that $\hat{f}\in L^p(\Omega)$ with $\frac{6}{5}<p\leq 2$. Thus, the system (\ref{problema3.1}) has solution $\theta_{1}\in W^{2,p}(\Omega)$.\\

On the other hand, observe that for finding
$\theta_{2},\theta_{3}\in W^{2,p}(\Omega)$ solutions of
(\ref{problema3.2}) and (\ref{problema3.3}) respectively, we can not
use directly Theorem \ref{tmadauge1}, because these systems have
nonhomogeneous boundary conditions. Therefore, for solving the
problem (\ref{problema3.2}), we first divide $\Gamma_{2}$ in four
parts $\Gamma_{2}^{i}$ with $i=1,2,3,4$, as showed in Figure
\ref{fronteraomega2}, and then we decompose the solution
$\theta_{2}$ as the sum
$\theta_{2}=\theta_{2}^{1}+\theta_{2}^{2}+\theta_{2}^{3}+\theta_{2}^{4}$,
where $\theta_{2}^{i}$ ($i=1,2,3,4$) solve:
\begin{align} \label{problema3.2.i}
 \begin{cases}
 -\Delta\theta_{2}^{i}=0  & \text{ in  } \Omega,\\
 \frac{\partial\theta_{2}^{i}}{\partial\textbf{n}}=\hat{\phi}_{1}^{i} & \text{ on } \Gamma_{2}^{i},\\
 \frac{\partial\theta_{2}^{i}}{\partial\textbf{n}}=0 & \text{ on } \Gamma_{1}\cup (\Gamma_{2}\setminus\Gamma_{2}^{i}),\\
 \theta_{2}^{i}=0 & \text{ on } \Gamma_{3},
 \end{cases}
\end{align}
where $\hat{\phi}_{1}^{i}$ is defined by $\hat{\phi}_{1}^{i}= \left.
\hat{\phi}_{1} \right|_{\Gamma_{2}^{i}}$ on $\Gamma_{2}^{i}$, and
$\hat{\phi}_{1}^{i}=0$ on $\Gamma_{2}\backslash\Gamma_{2}^{i}$,
$i=1,2,3,4$.
\begin{center}
\psset{unit=0.60cm,linewidth=0.8pt}
\begin{pspicture}(0,-0.75)(6.5,6.0)
\pspolygon[linestyle=dashed,dash=5pt 5pt,linewidth=0.3pt](0,0)(4,0)(6.22,1.6)(2.22,1.6)
\pspolygon[linestyle=dashed,dash=5pt 5pt,linewidth=0.3pt](0,4)(4,4)(4,0)(0,0)
\pspolygon[linestyle=dashed,dash=5pt 5pt,linewidth=0.3pt](4,4)(6.22,5.6)(6.22,1.6)(4,0)
\pspolygon[linestyle=dashed,dash=5pt 5pt,linewidth=0.3pt](2.22,5.62)(6.22,5.6)(4,4)(0,4)
\psline[linestyle=dashed,dash=5pt 5pt](2.22,1.6)(2.22,5.6)
\pspolygon[linewidth=1.3pt,linecolor=black,fillcolor=gray,fillstyle=solid,opacity=0.3](2.22,1.6)(6.22,1.6)(6.22,5.6)(2.22,5.62)
\pspolygon[linewidth=1.3pt,linecolor=black,fillcolor=gray,fillstyle=solid,opacity=0.6](0,0)(0,4)(2.22,5.62)(2.22,1.6)
\rput[tl](4.6,3.8){$\Gamma_2^2$}
\rput[tl](0.9,3.2){$\Gamma_2^1$}
\end{pspicture}
\qquad\qquad\qquad
\begin{pspicture}(0,-0.75)(6.5,6.0)
\pspolygon[linestyle=dashed,dash=5pt 5pt,linewidth=0.3pt](0,0)(4,0)(6.22,1.6)(2.22,1.6)
\pspolygon[linewidth=1.3pt,linecolor=black,fillcolor=gray,fillstyle=solid,opacity=0.3](0,4)(4,4)(4,0)(0,0)
\pspolygon[linewidth=1.3pt,linecolor=black,fillcolor=gray,fillstyle=solid,opacity=0.6](4,4)(6.22,5.6)(6.22,1.6)(4,0)
\pspolygon[linestyle=dashed,dash=5pt 5pt,linewidth=0.3pt](2.22,5.62)(6.22,5.6)(4,4)(0,4)
\psline[linestyle=dashed,dash=5pt 5pt](2.22,1.6)(2.22,5.6)
\rput[tl](4.86,3.14){$\Gamma_2^3$}
\rput[tl](1.2,2.5){$\Gamma_2^4$}
\end{pspicture}
\\[-2mm]
\figcaption{Division of $\Gamma_{2}$.} \label{fronteraomega2}
\end{center}
For solving problems (\ref{problema3.2.i}), we will adapt the ideas
of \cite{Ziane}, Section 2. In the case $i=1$, we divide the
boundary of $\Gamma_{2}^{1}$, denoted by $\partial\Gamma_{2}^{1}$,
as follows:
$\partial\Gamma_{2}^{1}=\Gamma_{2}^{11}\cup\Gamma_{2}^{12}\cup\Gamma_{2}^{13}\cup\Gamma_{2}^{14}$
(see Figure \ref{fronteraomega3}), and we construct a function
$\psi_{1}$ as a solution of the heat equation:
\begin{align} \label{problema3.3.1.1}
 \begin{cases}
 \frac{\partial\psi_{1}}{\partial x_{2}}=\Delta\psi_{1}  & \text{ in  } \Gamma_{2}^1\times (0,\infty),\\
 \frac{\partial\psi_{1}}{\partial\textbf{n}}=0 & \text{ on } \Gamma_{2}^{1i}\times (0,\infty), \;\;i=1,2,4,\\
 \psi_{1}=0 & \text{ on } \Gamma_{2}^{13}\times (0,\infty),\\
 \psi_{1}(x_{1},0,x_{3})=\hat{\phi}_{1}(x_{1},0,x_{3}) & \text{ on } \Gamma_{2}^1.
 \end{cases}
\end{align}
\begin{center}
\psset{unit=0.60cm,linewidth=0.8pt}
\begin{pspicture}(0,-0.75)(6.5,6.0)
\pspolygon[linewidth=1.3pt,linecolor=black,fillcolor=gray,fillstyle=solid,opacity=0.3](0,0)(2.22,1.6)(2.22,5.6)(0,4)
\pspolygon[linestyle=dashed,dash=5pt 5pt,linewidth=0.3pt](0,0)(4,0)(6.22,1.6)(2.22,1.6)
\pspolygon[linestyle=dashed,dash=5pt 5pt,linewidth=0.3pt](0,4)(4,4)(4,0)(0,0)
\pspolygon[linestyle=dashed,dash=5pt 5pt,linewidth=0.3pt](2.22,5.62)(6.22,5.6)(4,4)(0,4)
\psline[linestyle=dashed,dash=5pt 5pt,linewidth=0.3pt](6.22,1.6)(6.22,5.6)
\uput[u](1,4.8){$\Gamma_2^{11}$}
\uput[r](2.1,3.0){$\Gamma_2^{12}$}
\uput[d](1.85,1.25){$\Gamma_2^{13}$}
\uput[l](0,2){$\Gamma_2^{14}$}
\end{pspicture}
\\[-4mm]
\figcaption{Division of $\partial\Gamma_{2}^{1}$.} \label{fronteraomega3}
\end{center}
By standard methods (cf. \cite{Feireisl}, Ch. 10) we can verify that there exists a solution
$\psi_{1}\in H^2(\Omega)$ for problem (\ref{problema3.3.1.1}).
Moreover, considering the following function
\begin{equation*} \label{problema3.3.1.2}
T_{1}(x_{1},x_{2},x_{3})=(1-x_{2}^{2})\overset{L}{\underset{x_{2}}{\int}}\psi_{1}(x_{1},z,x_{3})\;dz-x_{2}^{2}\frac{(L^{2}-1)}{2L}\psi_{1}(x_{1},L,x_{3}),\;\: (x_{1},x_{2},x_{3})\in \Gamma_{2}^1\times (0,\infty),
\end{equation*}
we can easily see that $T_{1}$ satisfies the boundary conditions in
(\ref{problema3.2.i}) (for $i=1$). Moreover, taking into account
that $\psi_{1}\in H^2(\Omega)$, we deduce that $T_{1}\in
H^2(\Omega)$. Additionally, as $T_{1}\in H^2(\Omega)$ then $-\Delta
T_{1}\in L^2(\Omega),$ and consequently $-\Delta T_{1}\in
L^p(\Omega)$ for $\frac{6}{5}<p\leq2$. Thus, by Theorem
\ref{tmadauge1}, the solution $\tilde{T}_{1}$ of the system
\begin{align*} \label{problema3.3.1.3}
 \begin{cases}
 \Delta\tilde{T}_{1}=-\Delta T_{1} & \text{ in } \Omega,\\
 \frac{\partial\tilde{T}_{1}}{\partial\textbf{n}}=0 & \text{ on }  \Gamma_{1}\cup \Gamma_{2},\\
 \tilde{T}_{1}=0 & \text{ on } \Gamma_{3},
 \end{cases}
\end{align*}
belongs to $W^{2,p}(\Omega)$. In conclusion, considering
$\theta_{2}^{1}=T_{1}+\tilde{T}_{1}$, we obtain that
$\theta_{2}^{1}\in W^{2,p}(\Omega)$ satisfies the system
(\ref{problema3.2.i}) for $i=1$. Analogously, we can find solutions
$\theta_{2}^{2}$, $\theta_{2}^{3}$, $\theta_{2}^{4}$ and
$\theta_{3}$ in $W^{2,p}(\Omega)$, for problems
(\ref{problema3.2.i}) (for $i=2,3,4$) and (\ref{problema3.3})
respectively. Thus, considering
$\theta_{2}=\theta_{2}^{1}+\theta_{2}^{2}+\theta_{2}^{3}+\theta_{2}^{4}$
we deduce that $\theta_{2}\in W^{2,p}(\Omega)$ is a solution to the
system (\ref{problema3.2}). Therefore, it was verified the existence
of $\theta_{1},\theta_{2},\theta_{3}\in W^{2,p}(\Omega)$ solutions
of (\ref{problema3.1}), (\ref{problema3.2}) and (\ref{problema3.3})
respectively, and the theorem is proven.
\end{proof}

Now, taking into account Theorem \ref{lemregularidad1}, we prove the
following theorem which guarantees the existence of solution of
problem (\ref{problema1}).
\begin{theorem}
Let $\phi_{1} \in H^{\frac{1}{2}}(\Gamma_{2})$, $\phi_{2} \in H^{3/2}_{00}(\Gamma_{3})$, $\mathbf{u}\in  \widetilde{\mathbf{X}}$ and $\theta \in H^1(\Omega)$ weak solution of system (\ref{problema1}). Then, the solution $\theta$ belongs to the space $H^2(\Omega)$.
\end{theorem}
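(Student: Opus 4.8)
The plan is to push the quasilinear term $f:=-(\mathbf{u}\cdot\nabla)\theta$ back into the linear regularity result of Theorem \ref{lemregularidad1} by a bootstrap argument, using that the weak solution of (\ref{problema1}) is unique (for $\mathbf{u}$ fixed) so that at each stage it must coincide with the $W^{2,p}$-solution furnished by Theorem \ref{lemregularidad1}. Since $\Omega\subset\mathbb{R}^3$, two iterations will be enough: the first raises the integrability of $\nabla\theta$ just enough for the second to reach $H^2(\Omega)$.

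First I would record the uniqueness of the weak solution. For $\mathbf{u}\in\widetilde{\mathbf{X}}$ fixed, the difference $\vartheta$ of two weak solutions of (\ref{problema1}) lies in $Y$ and satisfies $a_1(\vartheta,W)+\left\langle B\vartheta,W\right\rangle_{\Gamma_1}+c_1(\mathbf{u},\vartheta,W)=0$ for all $W\in Y$; taking $W=\vartheta$, using $B>0$, the identity $c_1(\mathbf{u},\vartheta,\vartheta)=0$ (valid for $\mathbf{u}\in\widetilde{\mathbf{X}}$, exactly as in the uniqueness argument for Theorem \ref{tmasolunica}), and the generalized Poincar\'e inequality (\ref{poincare1}), one gets $\nabla\vartheta=0$, hence $\vartheta=0$. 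So the weak solution $\theta$ is unique. Moreover, any solution in $W^{2,p}(\Omega)$ with $p\in(6/5,2]$ provided by Theorem \ref{lemregularidad1} belongs to $H^1(\Omega)$ (Sobolev embedding in dimension three) and is a weak solution of the corresponding linear problem, so by uniqueness it coincides with $\theta$.

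Next comes the iteration. Since $\theta\in H^1(\Omega)$ we have $\theta\in L^6(\Omega)$, $\nabla\theta\in\mathbf{L}^2(\Omega)$, and since $\mathbf{u}\in\mathbf{H}^1(\Omega)\hookrightarrow\mathbf{L}^6(\Omega)$, H\"older's inequality with $\tfrac16+\tfrac12=\tfrac23$ gives $f\in L^{3/2}(\Omega)$. As $\tfrac65<\tfrac32\le 2$, Theorem \ref{lemregularidad1} applies (the data $\phi_1\in H^{1/2}(\Gamma_2)$, $\phi_2\in H^{3/2}_{00}(\Gamma_3)$ being as required), and by the uniqueness just noted $\theta\in W^{2,3/2}(\Omega)$. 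Then $\nabla\theta\in W^{1,3/2}(\Omega)\hookrightarrow\mathbf{L}^3(\Omega)$ (Sobolev embedding in $\mathbb{R}^3$), so $f=-(\mathbf{u}\cdot\nabla)\theta\in L^2(\Omega)$ by H\"older with $\tfrac16+\tfrac13=\tfrac12$. A second application of Theorem \ref{lemregularidad1}, now with $p=2$, yields $\theta\in W^{2,2}(\Omega)=H^2(\Omega)$, which is the claim.

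The genuinely delicate part is not in this argument but already absorbed into Theorem \ref{lemregularidad1}, namely $W^{2,p}$-regularity for the mixed Robin/Neumann/Dirichlet Laplace problem in the polyhedral domain $\Omega$. For the present statement, the only points deserving care are: checking that $f$ stays in the admissible range $(6/5,2]$ at every step (it does, with $p=3/2$ then $p=2$); justifying that the $W^{2,p}$-solution of Theorem \ref{lemregularidad1} is the given weak solution, which is what the uniqueness step above provides; and observing why two iterations are needed — $W^{2,3/2}(\Omega)$ does not embed into $H^2(\Omega)$, but the intermediate gain $\nabla\theta\in\mathbf{L}^3(\Omega)$ is exactly what promotes $f$ to $L^2(\Omega)$ and closes the bootstrap.
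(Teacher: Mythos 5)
Your proposal is correct and follows essentially the same bootstrap as the paper: $\mathbf{u}\in\mathbf{L}^6(\Omega)$ and $\nabla\theta\in\mathbf{L}^2(\Omega)$ give $f\in L^{3/2}(\Omega)$, Theorem \ref{lemregularidad1} gives $W^{2,3/2}$-regularity, the embedding $W^{1,3/2}(\Omega)\hookrightarrow L^{3}(\Omega)$ upgrades $f$ to $L^{2}(\Omega)$, and a second application of Theorem \ref{lemregularidad1} with $p=2$ gives $\theta\in H^{2}(\Omega)$. Your preliminary uniqueness step for the linearized problem (identifying the given weak solution with the $W^{2,p}$-solution produced by Theorem \ref{lemregularidad1}) is a point the paper leaves implicit, and including it makes the argument fully rigorous.
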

\begin{proof}
First, observe that as $\textbf{u}\in \widetilde{\textbf{X}}\subset \textbf{H}^1(\Omega)$ then using Sobolev embeddings we obtain that $\textbf{u}\in \textbf{L}^6(\Omega)$. Moreover, as $\theta\in H^1(\Omega)$, $\nabla\theta\in L^2(\Omega)$ and consequently  $-(\textbf{u}\cdot\nabla) \theta\in L^{\frac{3}{2}}(\Omega)$. Thus, by Theorem \ref{lemregularidad1} we conclude that the problem (\ref{problema1}) has solution $\theta\in W^{2,\frac{3}{2}}(\Omega)$. Analogously, since $\theta\in W^{2,\frac{3}{2}}(\Omega)$, $\nabla\theta\in W^{1,\frac{3}{2}}(\Omega)$, and consequently, using the Sobolev embedding $W^{1,\frac{3}{2}}(\Omega) \hookrightarrow L^3(\Omega)$ we deduce that $\nabla\theta\in L^3(\Omega)$. Therefore, $-(\textbf{u}\cdot\nabla) \theta\in L^{2}(\Omega)$ and from Theorem  \ref{lemregularidad1}, we conclude that the solution $\theta$ of problem (\ref{problema1}) belongs to $H^2(\Omega)$.
\end{proof}
\begin{remark}
Taking into account that the geometry of $\Omega$ corresponds with a cube, we are able to obtain $\mathbf{H}^2$-regularity for the velocity $\mathbf{u}.$ For that, we can apply the results of $L^p$-regularity for the Stokes problem in polyhedral domains (see \cite{Dauge1,Kellogg,Majda,Majda2}).
\end{remark}

\section{Existence of Optimal Solutions}
\hspace{0.4cm}In this section we will prove the existence of an optimal solution
for Problem (\ref{eq:funcional}). We define the set of admissible
solutions of Problem (\ref{eq:funcional}) as follows:
\begin{eqnarray*} \label{conjsolad}
&\CS_{ad}:=&\!\!\!\!
\{\textbf{z}\equiv[\textbf{u},\theta,\mathbf{g},\phi_{1},\phi_{2}]\in
\widetilde{\textbf{X}} \times H^1(\Omega)
\times\textbf{U}_{1} \times \mathcal{U}_{2} \times\mathcal{U}_{3} \text{ such that }\nonumber\\
&& \CJ(\textbf{z})<\infty \text{ and  the equations (\ref{eq:020})-(\ref{eq:022}) hold}\}.
\end{eqnarray*}
Then, we have the following result:
\begin{theorem} \label{tmaexsolop}
Under the conditions of Theorem \ref{tmaexistenciasolucion2}, if one
of the conditions $(i)$ or $(ii)$ given in (\ref{eq:funcional}) is
satisfied, then the problem (\ref{eq:funcional}) has at least one
solution, that is, there exists at least a
$\hat{\mathbf{z}}\equiv[\hat{\mathbf{u}},\hat{\theta},\hat{\mathbf{g}},\hat{\phi}_{1},\hat{\phi}_{2}]\in
\CS_{ad}$ such that
\begin{equation*}
\CJ (\hat{\mathbf{z}})=\underset{\mathbf{z}\in \CS_{ad}}{\min}\CJ
(\mathbf{z}). \label{eq:solop1}
 \end{equation*}
\end{theorem}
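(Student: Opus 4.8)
The plan is to use the direct method of the calculus of variations. First I would observe that the admissible set $\CS_{ad}$ is nonempty: by Theorem \ref{tmaexistenciasolucion2}, for any choice of $[\mathbf{g},\phi_1,\phi_2]$ in the (nonempty) sets $\mathcal{\textbf{U}}_1\times\mathcal{U}_2\times\mathcal{U}_3$ there exists a weak solution $[\mathbf{u},\theta]$ of (\ref{eq:020})--(\ref{eq:022}), and the quadratic cost $\CJ$ evaluated on such a quintuple is finite, so $\CS_{ad}\neq\emptyset$ and $m:=\inf_{\mathbf{z}\in\CS_{ad}}\CJ(\mathbf{z})\in[0,\infty)$. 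Then I take a minimizing sequence $\mathbf{z}^k=[\mathbf{u}^k,\theta^k,\mathbf{g}^k,\phi_1^k,\phi_2^k]\in\CS_{ad}$ with $\CJ(\mathbf{z}^k)\to m$.

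The next step is to extract uniform bounds and weak limits. Under hypothesis $(i)$ the controls lie in the bounded sets $\mathcal{\textbf{U}}_1,\mathcal{U}_2,\mathcal{U}_3$, so $\{\mathbf{g}^k\},\{\phi_1^k\},\{\phi_2^k\}$ are bounded in $\widetilde{\mathbf{H}}^{1/2}_{00}(\Gamma_0^1)$, $H^{1/2}(\Gamma_0\setminus\{x_3=0\})$, $H^{1/2}_{00}(\{x_3=0\})$ respectively; under hypothesis $(ii)$ the terms $\frac{\gamma_4}{2}\|\mathbf{g}^k\|^2+\frac{\gamma_5}{2}\|\phi_1^k\|^2+\frac{\gamma_6}{2}\|\phi_2^k\|^2$ are part of $\CJ(\mathbf{z}^k)$, which is bounded, and since $\gamma_4,\gamma_5,\gamma_6>0$ we again get boundedness of the controls. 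In either case, feeding these bounds into the a priori estimate (\ref{eq:estutheta}) of Theorem \ref{tmaexistenciasolucion2} gives that $\{\mathbf{u}^k\}$ is bounded in $\widetilde{\mathbf{X}}\subset\mathbf{H}^1(\Omega)$ and $\{\theta^k\}$ is bounded in $H^1(\Omega)$. Passing to a subsequence (not relabeled), I obtain $\mathbf{u}^k\rightharpoonup\hat{\mathbf{u}}$ weakly in $\mathbf{H}^1(\Omega)$ and strongly in $\mathbf{L}^4(\Omega)$, $\theta^k\rightharpoonup\hat\theta$ weakly in $H^1(\Omega)$ and strongly in $L^4(\Omega)$ (using the compact embedding $\mathbf{H}^1(\Omega)\hookrightarrow\hookrightarrow\mathbf{L}^4(\Omega)$), and $\mathbf{g}^k\rightharpoonup\hat{\mathbf{g}}$, $\phi_1^k\rightharpoonup\hat\phi_1$, $\phi_2^k\rightharpoonup\hat\phi_2$ weakly in the respective control spaces. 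Since $\mathcal{\textbf{U}}_1,\mathcal{U}_2,\mathcal{U}_3$ are closed and convex, they are weakly closed, so $\hat{\mathbf{g}}\in\mathcal{\textbf{U}}_1$, $\hat\phi_1\in\mathcal{U}_2$, $\hat\phi_2\in\mathcal{U}_3$. The weak closedness of $\widetilde{\mathbf{X}}$ and $H^1(\Omega)$ gives $\hat{\mathbf{u}}\in\widetilde{\mathbf{X}}$, $\hat\theta\in H^1(\Omega)$.

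Then I pass to the limit in the state equations (\ref{eq:020})--(\ref{eq:022}) to show $[\hat{\mathbf{u}},\hat\theta]$ is a weak solution with controls $[\hat{\mathbf{g}},\hat\phi_1,\hat\phi_2]$, so that $\hat{\mathbf{z}}\in\CS_{ad}$. The linear terms $Pr\,a(\cdot,\mathbf{v})$, $Pr\,M\,b_1(\cdot,\mathbf{v})$, $a_1(\cdot,W)$, $\langle B\cdot,W\rangle_{\Gamma_1}$ and $\langle f(\cdot),\mathbf{v}\rangle$ pass to the limit by weak convergence in $H^1$ together with continuity of the trace operator; the boundary term $\langle\phi_1^k,W\rangle_{\Gamma_0\setminus\{x_3=0\}}$ converges by weak convergence of $\phi_1^k$. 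The boundary identities (\ref{eq:022}) pass to the limit using weak continuity of the trace map from $\mathbf{H}^1(\Omega)$ into $\mathbf{H}^{1/2}(\partial\Omega)$ and the weak convergence of the controls. The crucial nonlinear terms are $c(\mathbf{u}^k,\mathbf{u}^k,\mathbf{v})$ and $c_1(\mathbf{u}^k,\theta^k,W)$: writing, e.g., $c(\mathbf{u}^k,\mathbf{u}^k,\mathbf{v})=-c(\mathbf{u}^k,\mathbf{v},\mathbf{u}^k)=-\int_\Omega[(\mathbf{u}^k\cdot\nabla)\mathbf{v}]\cdot\mathbf{u}^k$ (using the antisymmetry valid for test functions, after using it on the fixed smooth-enough $\mathbf{v}$), the product $\mathbf{u}^k\otimes\mathbf{u}^k\to\hat{\mathbf{u}}\otimes\hat{\mathbf{u}}$ strongly in $\mathbf{L}^2(\Omega)$ by the strong $\mathbf{L}^4$-convergence, while $\nabla\mathbf{v}$ is fixed, so the term converges; similarly $\mathbf{u}^k\theta^k\to\hat{\mathbf{u}}\hat\theta$ strongly in $\mathbf{L}^2(\Omega)$ handles $c_1$. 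This is the step I expect to be the main technical obstacle, since one must be careful about the space of test functions and the use of the antisymmetry relations of Lemma \ref{lemalt}; it is, however, standard for Navier--Stokes-type problems.

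Finally, weak lower semicontinuity of $\CJ$ closes the argument: $\CJ$ is a sum of squared norms ($\|\mathrm{rot}\,\mathbf{u}\|_{L^2}^2$, $\|\mathbf{u}-\mathbf{u}_d\|_{L^2}^2$, $\|\theta-\theta_d\|_{L^2}^2$, and the control norms), each of which is convex and continuous, hence weakly lower semicontinuous, on the relevant space. Therefore
\begin{equation*}
\CJ(\hat{\mathbf{z}})\leq\liminf_{k\to\infty}\CJ(\mathbf{z}^k)=m=\inf_{\mathbf{z}\in\CS_{ad}}\CJ(\mathbf{z}),
\end{equation*}
and since $\hat{\mathbf{z}}\in\CS_{ad}$ we have equality, so $\hat{\mathbf{z}}$ is a minimizer. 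This completes the proof.
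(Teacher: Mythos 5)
Your proposal is correct and follows essentially the same route as the paper: nonemptiness of $\CS_{ad}$ via Theorem \ref{tmaexistenciasolucion2}, a minimizing sequence with control bounds from hypothesis $(i)$ or $(ii)$, state bounds from the estimate (\ref{eq:estutheta}), weak/compact extraction, weak closedness of the convex admissible sets and of $\widetilde{\mathbf{X}}$, passage to the limit in (\ref{eq:020})--(\ref{eq:022}), and weak lower semicontinuity of $\CJ$. Your treatment of the nonlinear terms via strong $\mathbf{L}^4$ convergence is just a more explicit version of the ``standard procedure'' the paper invokes, so the two arguments coincide in substance.
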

\begin{proof}
From Theorem \ref{tmaexistenciasolucion2} we have that $\CS_{ad}$ is
nonempty. Denote by
$(\textbf{z}_{m})=([\textbf{u}_{m},\theta_{m},\textbf{g}_{m},\phi_{1_{m}},\phi_{2_{m}}])
\subset \CS_{ad}$, $m\in \mathbb{N}$, a minimizing sequence for
which
$\underset{m\rightarrow\infty}{\lim}\CJ(\textbf{z}_{m})=\underset{\textbf{z}\in
S_{ad}}{\min}\CJ (\textbf{z})$. If one of the conditions $(i)$ or
$(ii)$ is satisfied, then there exist constants $C_{1}$, $C_{2}$ and
$C_{3}$, independent of $m$, such that
$\|\mathbf{g}_{m}\|_{H^{\frac{1}{2}}(\Gamma^1_0)}\leq C_{1}$,
$\|\phi_{1_{m}}\|_{H^{\frac{1}{2}}({\Gamma_{0}\setminus\left\{x_{3}=0\right\}
})}\leq C_{2}$ and $\|\phi_{2_{m}}\|_{H^{\frac{1}{2}}(\{x_3=0\})}
\leq C_{3}$. Thus, from Theorem \ref{tmaexistenciasolucion2} we
conclude that there exist constants $C_{4}$ and $C_{5}$, independent
of $m$, such that $\left\Vert \mathbf{u}_{m}
\right\Vert_{H^1(\Omega)}\leq C_{4}$ and $\left\Vert \theta_{m}
\right\Vert_{H^1(\Omega)}\leq C_{5}$. Therefore, since
$\mathcal{\textbf{U}}_{1}$, $\mathcal{U}_{2}$ and $\mathcal{U}_{3}$
are closed convex subsets of
$\widetilde{\textbf{H}}^{1/2}_{00}(\Gamma_{0}^{1})$,
$H^{\frac{1}{2}}(\Gamma_{0}\setminus\left\{ x_{3}=0\right\})$ and
$H^{1/2}_{00}(\{ x_{3}=0\})$ respectively, we obtain
$\hat{\textbf{z}}\equiv[\hat{\textbf{u}},\hat{\theta},\hat{\textbf{g}},\hat{\phi}_{1},\hat{\phi}_{2}]\in
\mathbf{H}^1(\Omega) \times H^1(\Omega)\times\textbf{U}_{1} \times
\mathcal{U}_{2} \times\mathcal{U}_{3}$ such that, for some
subsequence of $(\textbf{z}_{m})_{m\in\mathbb{N}}\subset \CS_{ad}$
still denoted by $(\textbf{z}_{m})_{m\in\mathbb{N}},$ we have
\begin{eqnarray}
&&\textbf{u}_{m}\rightharpoonup \hat{\textbf{u}}  \ \mbox{in}\ \mathbf{H}^1(\Omega) \ \mbox{and
strongly in}\ \textbf{L}^p(\Omega),\ 2\leq p< 6, \label{convfu}\\
&&\theta_{m}\rightharpoonup \hat{\theta} \ \mbox{in}\ H^1(\Omega)\ \mbox{and
strongly in}\ L^l(\Omega),\ 2\leq l< 6,\nonumber\\
&&\textbf{g}_{m}\rightharpoonup \hat{\textbf{g}} \ \mbox{in}\ \textbf{H}^{\frac{1}{2}}(\Gamma_{0}^{1})\ \mbox{and
strongly in}\ \textbf{L}^{2}(\Gamma_{0}^{1}),\nonumber\\
&&\phi_{1_{m}}\rightharpoonup \hat{\phi}_{1} \ \mbox{in}\  H^{\frac{1}{2}}(\Gamma_{0}\setminus\left\{ x_{3}=0\right\})\ \mbox{and
strongly in}\ L^{2}(\Gamma_{0}\setminus\left\{ x_{3}=0\right\}),\nonumber\\
&&\phi_{2_{m}}\rightharpoonup \hat{\phi}_{2}\ \mbox{in}\  H^{\frac{1}{2}}(\{ x_{3}=0 \})\ \mbox{and
strongly in}\ L^{2}(\{ x_{3}=0 \}).\nonumber
\end{eqnarray}
Since $\left.\mathbf{u}_{m} \right|_{\Gamma^1_{0}} =
\textbf{g}_{m}$, $\left.\mathbf{u}_{m} \right|_{\Gamma^2_{0}} =
\textbf{u}^0$ and $\left.\theta_{m} \right|_{\{x_{3}=0\}} =
\phi_{2_{m}}$, it follows from the properties of the trace operators
that $\left.\hat{\mathbf{u}}
\right|_{\Gamma^1_{0}}=\hat{\textbf{g}}$, $\left.\hat{\mathbf{u}}
\right|_{\Gamma^2_{0}}=\textbf{u}^{0}$ and $\hat{\theta}
\mid_{\{x_{3}=0\}}=\hat{\phi}_{2}$; so, $\hat{\textbf{z}}$ satisfies
the boundary conditions (\ref{eq:022}). Moreover, since the third
component of $\textbf{u}_{m}$ denoted by $u_{m_{3}}$ is equal to $0$
on $\Gamma_{1}$ for all $m\in\mathbb{N}$, then from the continuity
of the trace operator we obtain $\hat{u}_{3}=0$ on $\Gamma_{1}$.
Also, using (\ref{convfu}) we obtain that $\text{div
}\textbf{u}_{m}\rightharpoonup \text{div }\hat{\textbf{u}}$ in
$\textbf{L}^2(\Omega)$, and given that $\text{div }\textbf{u}_{m}=0$
for all $m\in \mathbb{N}$, we conclude that $\text{div
}\hat{\textbf{u}}=0$. Moreover, as
$\hat{\textbf{u}}=\hat{\textbf{g}}$ on $\Gamma_{0}^{1}$ and
$\hat{\textbf{u}}=\textbf{u}^{0}$ on $\Gamma_{0}^{2}$, we obtain
that $\hat{\textbf{u}}\cdot \textbf{n}=0$ on
$\Gamma_{0}\setminus\left\{x_{3}=0\right\}$. Therefore, we conclude
that $\hat{\textbf{u}}\in \widetilde{\textbf{X}}$. A standard
procedure permits to pass the limit, as $m$ goes to $\infty,$ in the
variational formulation (\ref{eq:020})-(\ref{eq:021}), and we obtain
that $\hat{\textbf{z}}$ satisfies the weak formulation
(\ref{eq:020})-(\ref{eq:022}). Consequently we have that
$\hat{\textbf{z}}\equiv[\hat{\textbf{u}},\hat{\theta},\hat{\textbf{g}},\hat{\phi}_{1},\hat{\phi}_{2}]\in
\CS_{ad},$ and then
\begin{equation*} \label{eq:controloptimo1}
\CJ (\hat{\textbf{z}})\geq \underset{\textbf{z}\in
\CS_{ad}}{\text{inf}}\CJ (\textbf{z}).
\end{equation*}
Finally, recalling that the functional $\CJ$ is weakly lower
semicontinuous on $\CS_{ad}$, we have that
\begin{equation*} \label{eq:controloptimo2}
\CJ (\hat{\textbf{z}})= \underset{\textbf{z}\in
\CS_{ad}}{\text{min}}\CJ (\textbf{z}).
\end{equation*}
\end{proof}

\begin{remark}
Let $[\mathbf{u}_{b},\theta_{b}]$ the basic solution to the problem
(\ref{modest1})-(\ref{condfro3}) given by (\ref{estadobasico}). From
Theorem \ref{tmaexsolop}, we can obtain the existence of controls
$[\mathbf{g},\phi_{1},\phi_{2}]\in
\mathcal{\mathbf{U}}_{1}\times\mathcal{U}_{2}\times\mathcal{U}_{3}$
and a weak solution $[\mathbf{u},\theta]\in
\widetilde{\mathbf{X}}\times H^1(\Omega)$ of the problem
(\ref{eq:020})-(\ref{eq:022}), such that the functional
(\ref{eq:funcional}) is minimized if we consider
$\mathbf{u}_{d}=\mathbf{u}_{b}$ and $\theta_{d}=\theta_{b}$, the
basic state.
\end{remark}
\section{Necessary Optimality Conditions and an Optimality System}

\hspace{0.4cm}In order to obtain first-order optimality conditions, we start by
considering the following Banach spaces:
$\mathbb{G}=\widetilde{\textbf{X}}\times
H^1(\Omega)\times\mathcal{\textbf{U}}_{1} \times\mathcal{U}_{2}
\times\mathcal{U}_{3}$ and $\mathbb{H}=\mathbf{X}_0\times Y$, with
the usual inner products and norms. Moreover, if $\Gamma$ is a
connected subset of the boundary $\partial\Omega$, we define the
trace spaces
\begin{equation*}
H^{1/2}_{e}(\Gamma)=\{v\in L^{2}(\Gamma): \mbox{ there exists } g\in H^{\frac{1}{2}}(\partial\Omega), \ \left. g\right|_{\Gamma}=v \},
\end{equation*}
\begin{equation*}
\widetilde{\mathbf{H}}^{1/2}_{e}(\Gamma)=\left\{\mathbf{v}\in
\mathbf{L}^{2}(\Gamma):\exists\ \mathbf{g}\in
\mathbf{H}^{\frac{1}{2}}(\partial\Omega),\ \left.
\mathbf{g}\right|_{\Gamma}=\mathbf{v}, \int_{\Gamma} \mathbf{g}\cdot
\textbf{n} = 0, \ \mathbf{g}\cdot \textbf{n}=0 \ \mbox{on} \
\Gamma\setminus \{x_{3}=0\}, \ g_{3}=0 \ \mbox{on} \ \Gamma_{1}
\right\},
\end{equation*}
which are closed subspaces of $H^{1/2}(\Gamma)$ and ${\bf H}^{1/2}(\Gamma),$ respectively. Also, let $\mathbf{u}_{\mathbf{g}}^{0}$ defined by
\begin{align*}
\mathbf{u}_{\mathbf{g}}^{0}
 =
 \begin{cases}
 \mathbf{g}                             & \text{on  } \Gamma_{0}^{1},\\
 \mathbf{u}^{0}                             & \text{on  } \Gamma_{0}^{2}.\label{eq:hopf1}
 \end{cases}
\end{align*}
Then, taking into account that  $\mathbf{g}\in
\widetilde{\mathbf{H}}^{1/2}_{00}(\Gamma_{0}^{1})$ and
$\mathbf{u}^{0}\in
\widetilde{\mathbf{H}}^{1/2}_{00}(\Gamma_{0}^{2})$, we can easily
prove that $\mathbf{u}_{\mathbf{g}}^{0}\in
\widetilde{\mathbf{H}}^{1/2}_{00}(\Gamma_{0})$. Also, we consider
the following operators $\CF_{1}:\mathbb{G}\rightarrow
\textbf{X}'_{0}$, $\CF_{2}:\mathbb{G} \rightarrow Y'$,
$\CF_{3}:\mathbb{G} \rightarrow
\widetilde{\mathbf{H}}^{1/2}_{e}(\Gamma_{0})$ and
$\CF_{4}:\mathbb{G} \rightarrow H^{1/2}_{e}(\{x_{3}=0\})$, defined
at each point $\textbf{z}:=[\textbf{u},\theta,\textbf{g},\phi_{1},
\phi_{2}]$ by:
\begin{equation*}
\left\{
\begin{array}[c]{rcl}
\langle \CF_{1}(\textbf{z}),\textbf{v} \rangle &=& Pr\, a(\textbf{u},\textbf{v})+Pr\, M\, b_{1}(\theta,\textbf{v})+c(\textbf{u},\textbf{u},\textbf{v})-\langle f(\theta),\textbf{v} \rangle ,\;\forall\textbf{v}\in \textbf{X}_{0},\\
\langle \CF_{2}(\textbf{z}),W \rangle &=& c_{1}(\textbf{u},\theta,W)+a_{1}(\theta,W)+\left\langle B\theta,W\right\rangle _{\Gamma_{1}}-\left\langle \phi_{1},W\right\rangle _{\Gamma_{0}\backslash\{x_{3}=0\}},\;\forall W\in Y,\\
\CF_{3}(\textbf{z})&=&\left.\mathbf{u}\right|_{\Gamma_{0}} - \textbf{u}_{\textbf{g}}^{0},\\
\CF_{4}(\textbf{z})&=&\left. \theta \right|_{\{x_{3}=0\}} -
\phi_{2}.
\end{array}
\right.
\end{equation*}

In order to simplify the notation, let us denote by $\mathbb{M}$ the
space
\begin{equation*}
\mathbb{M}\equiv \textbf{X}'_{0}\times Y' \times \widetilde{\mathbf{H}}^{1/2}_{e}(\Gamma_{0}) \times H^{1/2}_{e}(\{x_{3}=0\}),
\end{equation*}
and define the operator
\begin{equation*}
\CFF:\mathbb{G}\rightarrow \mathbb{M},\;\text{ such that }
\CFF(\textbf{z}):=[\CF_{1}(\textbf{z}), \CF_{2}(\textbf{z}),
\CF_{3}(\textbf{z}), \CF_{4}(\textbf{z})].
\end{equation*}
Then the optimal control problem (\ref{eq:funcional}) is equivalent to:
\begin{eqnarray}
\left\{
\begin{array}[c]{l}
\text{Find } \textbf{z}:= [\textbf{u},\theta,\textbf{g},\phi_{1}, \phi_{2}]\in \mathbb{G} \text{ such that the functional}\\
\vspace{0,2 cm}\\
\CJ
[\textbf{u},\theta,\textbf{g},\phi_{1},\phi_{2}]=\frac{\gamma_1}{2}\|\text{rot
}
\mathbf{u}\|_{L^2(\Omega)}^2+\frac{\gamma_2}{2}\|\mathbf{u}-\mathbf{u}_d\|_{L^2(\Omega)}^2
+\frac{\gamma_3}{2}\|\theta-\theta_d\|_{L^2(\Omega)}^2 +\frac{\gamma_{4}}{2}\|\mathbf{g}\|^2_{H^{\frac{1}{2}}(\Gamma_{0}^{1})}\\
\hspace{3 cm}
+\frac{\gamma_{5}}{2}\|\phi_1\|^2_{H^{\frac{1}{2}}({\Gamma_{0}\setminus\left\{
x_{3}=0\right\} })}
+\frac{\gamma_6}{2}\|\phi_2\|^2_{H^{\frac{1}{2}}(\{x_3=0\})}, \vspace{0,4 cm} \\
\text{is minimized subject to } \langle
\CFF(\textbf{z}),[\textbf{v},W] \rangle=[\langle
\CF_{1}(\textbf{z}),\textbf{v} \rangle, \langle
\CF_{2}(\textbf{z}),W \rangle,
\CF_{3}(\textbf{z}),\CF_{4}(\textbf{z})]=[\textbf{0},0,\textbf{0},0].
\end{array}
\right. \label{eq:funcionalmul2}
\end{eqnarray}

\subsection{Existence of Lagrange Multipliers}
\hspace{0.4cm}In this subsection, we will prove the existence of Lagrange
multipliers. For that, first we will establish a regularity
condition for an optimal solution
$\hat{\mathbf{z}}\equiv[\hat{\mathbf{u}},\hat{\theta},\hat{\mathbf{g}},\hat{\phi}_{1},\hat{\phi}_{2}]\in
S_{ad},$ as was established in \cite{zowe}, p. 50. We follows the
ideas of \cite{delosReyes}. We start by establishing the following
two lemmas related to the Fr\'echet differentiability of $\CFF$ and
$\CJ.$
\begin{lemma}\label{lemmaderfref}
The operator $\CFF$ is Fr\'echet differentiable with respect to
$\mathbf{z}=[{\mathbf{u}},{\theta},{\mathbf{g}},{\phi_{1}},{\phi_{2}}]\in
\mathbb{G}$. Moreover, at an arbitrary point
$\hat{\mathbf{z}}=[\hat{\mathbf{u}},\hat{\theta},\hat{\mathbf{g}},\hat{\phi_{1}},\hat{\phi_{2}}]\in
\mathbb{G}$, the Fr\'echet derivative operator of
$\textbf{\textit{F}}$ with respect to $\mathbf{z}$ is the linear and
bounded operator
$\textbf{\textit{F}}_{\mathbf{z}}(\hat{\mathbf{z}}): \mathbb{G}
\rightarrow \mathbb{M}$ such that at each point
$\mathbf{t}=[\mathbf{h}_{1},h_{2},\mathbf{r},\varrho,\tau] \in
\mathbb{G}$, is defined by:
\begin{eqnarray}\label{frechet1a}
\left\{
\begin{array}[c]{l}
\langle \CF _{1\mathbf{z}}(\hat{\mathbf{z}})\mathbf{t},\mathbf{v} \rangle=Pr\, a(\mathbf{h}_1,\mathbf{v})+Pr\, M\, b_{1}(h_{2},\mathbf{v})+c(\hat{\mathbf{u}},\mathbf{h}_{1},\mathbf{v})+c(\mathbf{h}_{1},\hat{\mathbf{u}},\mathbf{v})-Pr R (h_{2},{v}_{3})_{L^{2}(\Omega)},\\
\langle \CF _{2\mathbf{z}}(\hat{\mathbf{z}})\mathbf{t},W \rangle=c_{1}(\hat{\mathbf{u}},h_{2},W)+c_{1}(\mathbf{h}_{1},\hat{\theta},W) +a_{1}(h_{2},W)+\left\langle Bh_{2},W\right\rangle _{\Gamma_{1}}-\left\langle \varrho,W\right\rangle _{\Gamma_{0}\backslash\{x_{3}=0\}},\\
\CF _{3\mathbf{z}}(\hat{\mathbf{z}})\mathbf{t}=\left.\mathbf{h}_{1} \right|_{\Gamma_{0}} -\mathcal{B}_1\mathbf{r},\\
\CF _{4\mathbf{z}}(\hat{\mathbf{z}})\mathbf{t}=\left. h_{2}
\right|_{\{x_{3}=0\}} -\tau,
\end{array}
\right.
\end{eqnarray}
for all $[\mathbf{v},W]\in \mathbb{H}$, where
$\mathcal{B}_1\in\mathcal{L}(\widetilde{\mathbf{H}}^{1/2}_{00}(\Gamma_{0}^{1}),\widetilde{\mathbf{H}}^{1/2}_{00}(\Gamma_0))$
is defined by
\begin{equation}\label{operB}
\mathcal{B}_1\mathbf{r}:=\left\{\begin{array}{lll}
\mathbf{r}&\mbox{ on }&\Gamma_0^1,\\
{\bf 0}&\mbox{ on }&\Gamma_0^2.
\end{array}\right.
\end{equation}
\end{lemma}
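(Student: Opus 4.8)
The plan is to read off the differentiability of $\CFF$ from the fact that it is a polynomial map of degree two in $\mathbf{z}$. Indeed, $\CF_3$ and $\CF_4$ are affine in $\mathbf{z}$ (linear in $\mathbf{u}$, resp. $\theta$, plus the affine term $\mathbf{u}_{\mathbf{g}}^{0}$, resp. $-\phi_{2}$); in $\CF_1$ the only nonlinear term is the trilinear form $c(\mathbf{u},\mathbf{u},\mathbf{v})$, which is quadratic in $\mathbf{u}$, the remaining terms $Pr\,a(\mathbf{u},\mathbf{v})$, $Pr\,M\,b_1(\theta,\mathbf{v})$ and $-\langle f(\theta),\mathbf{v}\rangle$ being affine in $\mathbf{z}$; and in $\CF_2$ the only nonlinear term is $c_1(\mathbf{u},\theta,W)$, bilinear in $(\mathbf{u},\theta)$. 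First I would record the continuity bounds that make every term land in $\mathbb{M}$: by H\"older's inequality and the embedding $H^1(\Omega)\hookrightarrow L^4(\Omega)$ (valid since $\Omega\subset\mathbb{R}^3$), $|c(\mathbf{u},\mathbf{v},\mathbf{w})|\le C\|\mathbf{u}\|_{H^1}\|\mathbf{v}\|_{H^1}\|\mathbf{w}\|_{H^1}$ and $|c_1(\mathbf{u},\theta,W)|\le C\|\mathbf{u}\|_{H^1}\|\theta\|_{H^1}\|W\|_{H^1}$, while $a$, $a_1$, $b_1$ are bounded bilinear forms, $f$ is bounded affine, the trace maps $H^1(\Omega)\to H^{1/2}(\partial\Omega)$ are bounded after restriction to $\Gamma_0$ and to $\{x_3=0\}$, and $\mathcal{B}_1$ is bounded and linear by (\ref{operB}).

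Next I would compute the increment. Fix $\hat{\mathbf{z}}=[\hat{\mathbf{u}},\hat{\theta},\hat{\mathbf{g}},\hat{\phi}_1,\hat{\phi}_2]\in\mathbb{G}$ and $\mathbf{t}=[\mathbf{h}_1,h_2,\mathbf{r},\varrho,\tau]\in\mathbb{G}$. By bilinearity, $c(\hat{\mathbf{u}}+\mathbf{h}_1,\hat{\mathbf{u}}+\mathbf{h}_1,\mathbf{v})-c(\hat{\mathbf{u}},\hat{\mathbf{u}},\mathbf{v})=c(\hat{\mathbf{u}},\mathbf{h}_1,\mathbf{v})+c(\mathbf{h}_1,\hat{\mathbf{u}},\mathbf{v})+c(\mathbf{h}_1,\mathbf{h}_1,\mathbf{v})$, $c_1(\hat{\mathbf{u}}+\mathbf{h}_1,\hat{\theta}+h_2,W)-c_1(\hat{\mathbf{u}},\hat{\theta},W)=c_1(\hat{\mathbf{u}},h_2,W)+c_1(\mathbf{h}_1,\hat{\theta},W)+c_1(\mathbf{h}_1,h_2,W)$, and $\langle f(\hat{\theta}+h_2)-f(\hat{\theta}),\mathbf{v}\rangle=Pr\,R\,(h_2,v_3)_{L^2(\Omega)}$; all the remaining terms are already linear in $\mathbf{t}$. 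Gathering the contributions that are linear in $\mathbf{t}$ reproduces exactly the operator $\CFF_{\mathbf{z}}(\hat{\mathbf{z}})$ displayed in (\ref{frechet1a}), and the remainder $\mathcal{R}(\mathbf{t}):=\CFF(\hat{\mathbf{z}}+\mathbf{t})-\CFF(\hat{\mathbf{z}})-\CFF_{\mathbf{z}}(\hat{\mathbf{z}})\mathbf{t}$ has first component $\mathbf{v}\mapsto c(\mathbf{h}_1,\mathbf{h}_1,\mathbf{v})\in\mathbf{X}_0'$, second component $W\mapsto c_1(\mathbf{h}_1,h_2,W)\in Y'$, and vanishing third and fourth components (because $\CF_3$ and $\CF_4$ are affine, so their increment is already linear).

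Finally I would estimate the remainder: by the continuity bounds above, $\|\mathcal{R}(\mathbf{t})\|_{\mathbb{M}}\le C\left(\|\mathbf{h}_1\|_{H^1}^{2}+\|\mathbf{h}_1\|_{H^1}\|h_2\|_{H^1}\right)\le C\|\mathbf{t}\|_{\mathbb{G}}^{2}$, whence $\|\mathcal{R}(\mathbf{t})\|_{\mathbb{M}}=o(\|\mathbf{t}\|_{\mathbb{G}})$ as $\mathbf{t}\to\mathbf{0}$. Together with the linearity of $\CFF_{\mathbf{z}}(\hat{\mathbf{z}})$ in $\mathbf{t}$ (immediate from its formula) and its boundedness, $\|\CFF_{\mathbf{z}}(\hat{\mathbf{z}})\|_{\mathcal{L}(\mathbb{G},\mathbb{M})}\le C(1+\|\hat{\mathbf{u}}\|_{H^1}+\|\hat{\theta}\|_{H^1})$, this is precisely the assertion that $\CFF$ is Fr\'echet differentiable at $\hat{\mathbf{z}}$ with derivative $\CFF_{\mathbf{z}}(\hat{\mathbf{z}})$. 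Since $\hat{\mathbf{z}}$ is arbitrary and the map $\hat{\mathbf{z}}\mapsto\CFF_{\mathbf{z}}(\hat{\mathbf{z}})$ is itself affine in $\hat{\mathbf{z}}$ (hence continuous), $\CFF$ is in fact continuously Fr\'echet differentiable on $\mathbb{G}$, the latter being understood with the linear structure of the ambient Banach spaces $\widetilde{\mathbf{H}}^{1/2}_{00}(\Gamma_0^1)$, $H^{1/2}(\Gamma_0\setminus\{x_3=0\})$, $H^{1/2}_{00}(\{x_3=0\})$ that contain $\mathcal{\mathbf{U}}_1,\mathcal{U}_2,\mathcal{U}_3$.

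There is no serious obstacle in this argument; the only point requiring care is the bookkeeping of which Sobolev embedding or trace theorem places each term of $\CFF$ and of $\CFF_{\mathbf{z}}(\hat{\mathbf{z}})$ in the correct component of $\mathbb{M}$ --- in particular that $\left.\mathbf{h}_1\right|_{\Gamma_0}-\mathcal{B}_1\mathbf{r}$ belongs to $\widetilde{\mathbf{H}}^{1/2}_{e}(\Gamma_0)$, which follows from $\mathbf{h}_1\in\widetilde{\mathbf{X}}$ (so $\text{div}\,\mathbf{h}_1=0$, $\mathbf{h}_1\cdot\mathbf{n}=0$ on $\Gamma_0\setminus\{x_3=0\}$, and the third component of $\mathbf{h}_1$ vanishes on $\Gamma_1$) together with the definition (\ref{operB}) of $\mathcal{B}_1$.
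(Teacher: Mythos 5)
Your argument is correct: the paper in fact states this lemma without proof, treating it as routine, and your computation supplies exactly the standard verification one would expect --- the increment of the quadratic terms $c(\mathbf{u},\mathbf{u},\mathbf{v})$ and $c_{1}(\mathbf{u},\theta,W)$ splits into the linear part appearing in (\ref{frechet1a}) plus the remainders $c(\mathbf{h}_1,\mathbf{h}_1,\mathbf{v})$ and $c_{1}(\mathbf{h}_1,h_2,W)$, which are $O(\|\mathbf{t}\|^2_{\mathbb{G}})$ by H\"older and $H^1(\Omega)\hookrightarrow L^4(\Omega)$, while $\CF_3$, $\CF_4$ and the remaining terms are affine. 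Your closing remarks on the trace/target-space bookkeeping and on interpreting differentiability in the ambient linear spaces containing $\mathcal{\mathbf{U}}_1,\mathcal{U}_2,\mathcal{U}_3$ are exactly the points that need care, so the proposal is complete.
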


\begin{lemma}\label{lemmaderfrecj}
The functional $\CJ$ is Fr\'echet differentiable with respect to
$\mathbf{z}=[{\mathbf{u}},{\theta},{\mathbf{g}},{\phi_{1}},{\phi_{2}}]\in
\mathbb{G}$. Moreover, at an arbitrary point
$\hat{\mathbf{z}}=[\hat{\mathbf{u}},\hat{\theta},\hat{\mathbf{g}},\hat{\phi_{1}},\hat{\phi_{2}}]\in
\mathbb{G},$ the Fr\'echet derivative operator of $\CJ$ with respect
to $\mathbf{z}$ is the linear and bounded operator
$\CJ_{\mathbf{z}}(\hat{\mathbf{z}}): \mathbb{G}\rightarrow
\mathbb{R}$ such that at each point
$\mathbf{t}=[\mathbf{h}_{1},h_{2},\mathbf{r},\varrho,\tau] \in
\mathbb{G}$, is defined by:
\begin{eqnarray}
&\CJ_{\mathbf{z}}(\hat{\mathbf{z}})\mathbf{t}&\!\!\!\!\!=\gamma_{1}
(\mbox{rot } \hat{\mathbf{u}},\mbox{rot }
\mathbf{h}_{1})_{L^2(\Omega)}
+\gamma_{2}(\hat{\mathbf{u}}-\mathbf{u}_{d},\mathbf{h}_{1})_{L^2(\Omega)}+\gamma_{3}(\hat{\theta}-
\theta_{d}, h_{2})_{L^2(\Omega)}+\gamma_4(
\hat{\mathbf{g}},\mathbf{r})_{H^{\frac{1}{2}}(\Gamma^1_0)}\nonumber\\
&&+\gamma_5(
\hat{\phi}_{1},\varrho)_{H^{\frac{1}{2}}(\Gamma_{0}\setminus\left\{
x_{3}=0\right\}) }
+\gamma_6(\hat{\phi}_{2},\tau)_{H^{\frac{1}{2}}(\left\{
x_{3}=0\right\}) }.\label{derJ}
\end{eqnarray}
\end{lemma}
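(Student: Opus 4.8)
The plan is to exploit the structure of $\CJ$ as a finite sum of quadratic terms, each of which is the composition of a \emph{bounded linear} operator from $\mathbb{G}$ into a Hilbert space $H$ with the map $x\mapsto\frac{\gamma}{2}\|x-d\|_H^2$, $d\in H$ fixed. Since Fr\'echet differentiability is stable under finite sums, it suffices to treat a generic term
\begin{equation*}
g(\mathbf{z})=\frac{\gamma}{2}\,\|\Lambda\mathbf{z}-d\|_{H}^{2},\qquad \Lambda\in\mathcal{L}(\mathbb{G},H),\ d\in H .
\end{equation*}
Concretely, I would first identify the six pairs $(\Lambda_i,H_i)$: $\Lambda_1\mathbf{z}=\text{rot }\mathbf{u}$ and $\Lambda_2\mathbf{z}=\mathbf{u}$ map $\mathbb{G}$ into $\mathbf{L}^2(\Omega)$ (bounded because $\mathbf{u}\in\mathbf{H}^1(\Omega)$ and $\mathbf{H}^1(\Omega)\hookrightarrow\mathbf{L}^2(\Omega)$), $\Lambda_3\mathbf{z}=\theta$ maps into $L^2(\Omega)$, and $\Lambda_4\mathbf{z}=\mathbf{g}$, $\Lambda_5\mathbf{z}=\phi_1$, $\Lambda_6\mathbf{z}=\phi_2$ are the coordinate projections onto $\mathbf{H}^{1/2}(\Gamma_0^1)$, $H^{1/2}(\Gamma_0\setminus\{x_3=0\})$ and $H^{1/2}(\{x_3=0\})$ respectively (trivially bounded); the shifts are $d_2=\mathbf{u}_d$, $d_3=\theta_d$ and $d_1=d_4=d_5=d_6=0$.

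For the generic term, expanding the squared norm by bilinearity of $(\cdot,\cdot)_H$ gives, for $\hat{\mathbf{z}},\mathbf{t}\in\mathbb{G}$,
\begin{equation*}
g(\hat{\mathbf{z}}+\mathbf{t})-g(\hat{\mathbf{z}})=\gamma\,(\Lambda\hat{\mathbf{z}}-d,\Lambda\mathbf{t})_{H}+\frac{\gamma}{2}\,\|\Lambda\mathbf{t}\|_{H}^{2}.
\end{equation*}
The map $\mathbf{t}\mapsto\gamma\,(\Lambda\hat{\mathbf{z}}-d,\Lambda\mathbf{t})_{H}$ is linear and bounded, with norm at most $\gamma\,\|\Lambda\|_{\mathcal{L}(\mathbb{G},H)}\,\|\Lambda\hat{\mathbf{z}}-d\|_{H}$, while the quadratic remainder satisfies $\frac{\gamma}{2}\|\Lambda\mathbf{t}\|_{H}^{2}\le\frac{\gamma}{2}\|\Lambda\|_{\mathcal{L}(\mathbb{G},H)}^{2}\|\mathbf{t}\|_{\mathbb{G}}^{2}=o(\|\mathbf{t}\|_{\mathbb{G}})$ as $\mathbf{t}\to 0$. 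This is precisely the definition of Fr\'echet differentiability, so each term is Fr\'echet differentiable at every $\hat{\mathbf{z}}$ with derivative $\mathbf{t}\mapsto\gamma\,(\Lambda\hat{\mathbf{z}}-d,\Lambda\mathbf{t})_{H}$.

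Finally I would add up the six contributions. Writing $\mathbf{t}=[\mathbf{h}_1,h_2,\mathbf{r},\varrho,\tau]$ and noting that $\Lambda_1\mathbf{t}=\text{rot }\mathbf{h}_1$, $\Lambda_2\mathbf{t}=\mathbf{h}_1$, $\Lambda_3\mathbf{t}=h_2$, $\Lambda_4\mathbf{t}=\mathbf{r}$, $\Lambda_5\mathbf{t}=\varrho$, $\Lambda_6\mathbf{t}=\tau$, one obtains exactly formula (\ref{derJ}); linearity and boundedness of $\CJ_{\mathbf{z}}(\hat{\mathbf{z}})$ are then immediate, being a finite sum of bounded linear functionals. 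There is essentially no obstacle in this lemma; the only points deserving a remark are the boundedness of the $\text{rot}$ operator and of the embedding $\mathbf{H}^1(\Omega)\hookrightarrow\mathbf{L}^2(\Omega)$ (both immediate), and the observation that the inner products appearing in the last three terms of (\ref{derJ}) are the $H^{1/2}$-inner products dictated by the corresponding penalization terms of $\CJ$, not $L^2$-inner products.
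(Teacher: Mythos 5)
Your proposal is correct: the decomposition of $\CJ$ into six terms of the form $\frac{\gamma}{2}\|\Lambda\mathbf{z}-d\|_{H}^{2}$ with $\Lambda$ linear and bounded, followed by the exact quadratic expansion whose remainder is $O(\|\mathbf{t}\|_{\mathbb{G}}^{2})=o(\|\mathbf{t}\|_{\mathbb{G}})$, is precisely the routine verification expected here, and it yields formula (\ref{derJ}) with the correct $H^{1/2}$-inner products in the control terms. The paper in fact states this lemma without proof, treating it as standard, so there is nothing to compare against; the only cosmetic remark is that, as in the paper, differentiability is understood with respect to the ambient product space in which $\mathcal{\mathbf{U}}_{1}\times\mathcal{U}_{2}\times\mathcal{U}_{3}$ sits, which is exactly how you have argued.
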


In the next lemma, we will give a condition to assure that $\hat{\bf
z}\in \mathcal{S}_{ad}$ satisfies the regular point condition (see
\cite{zowe}, p. 50). Thereafter the existence of Lagrange
multipliers is shown.
\begin{lemma}\label{lemma8}
Let
$\hat{\mathbf{z}}\equiv[\hat{\mathbf{u}},\hat{\theta},\hat{\mathbf{g}},\hat{\phi}_{1},\hat{\phi}_{2}]\in
\CS_{ad}$ be a feasible solution for the problem
(\ref{eq:funcionalmul2}). If $Pr$ is large enough and $M,R$ are
small enough such that
\begin{equation}\label{rpc1}
\beta_0:=\min\left\{Pr-C\left( Pr (M+R)+\Vert
\hat{\mathbf{u}}\Vert_{H^1(\Omega)}+\Vert
\hat{\theta}\Vert_{H^1(\Omega)}^{2}\right),\displaystyle\frac{1}{2}-C
Pr(R+M)\right\}>0,
\end{equation}
where $C$ is some positive
constant, which only depends on the domain $\Omega$, then $\hat{\bf
z}$ satisfies the regular point condition.
\end{lemma}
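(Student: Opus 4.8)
The plan is to verify the Zowe--Kurcyusz regular point condition \cite{zowe} for the constraint system $\CFF(\mathbf{z})=\mathbf{0}$ together with the convex admissible set $\widetilde{\mathbf{X}}\times H^1(\Omega)\times\mathcal{U}_1\times\mathcal{U}_2\times\mathcal{U}_3$. Since the first two factors of this set are whole Banach spaces and $\mathbf{0}$ lies in the conical hull of each $\mathcal{U}_i-\hat{(\cdot)}$, it is enough to prove that the linear operator
\[
[\mathbf{h}_1,h_2]\in\widetilde{\mathbf{X}}\times H^1(\Omega)\ \longmapsto\ \big(\CF_{1\mathbf{z}}(\hat{\mathbf{z}})\mathbf{t},\ \CF_{2\mathbf{z}}(\hat{\mathbf{z}})\mathbf{t},\ \mathbf{h}_1|_{\Gamma_0},\ h_2|_{\{x_3=0\}}\big),\qquad \mathbf{t}:=[\mathbf{h}_1,h_2,\mathbf{0},0,0],
\]
maps onto $\mathbb{M}=\mathbf{X}_0'\times Y'\times\widetilde{\mathbf{H}}^{1/2}_e(\Gamma_0)\times H^{1/2}_e(\{x_3=0\})$ (here I have used that $\CF_{3\mathbf{z}}\mathbf{t}=\mathbf{h}_1|_{\Gamma_0}$ and $\CF_{4\mathbf{z}}\mathbf{t}=h_2|_{\{x_3=0\}}$ when $\mathbf{r},\varrho,\tau$ are set to $0$). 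Thus I fix an arbitrary $[\boldsymbol\xi_1,\xi_2,\boldsymbol\xi_3,\xi_4]\in\mathbb{M}$ and look for $[\mathbf{h}_1,h_2]$ solving, in the variational sense, the linearized equations \eqref{frechet1a} with these right-hand sides and with $\mathbf{h}_1|_{\Gamma_0}=\boldsymbol\xi_3$, $h_2|_{\{x_3=0\}}=\xi_4$.

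First I would lift the prescribed traces as in Section~3. By definition of $\widetilde{\mathbf{H}}^{1/2}_e(\Gamma_0)$ there is $\mathbf{g}\in\mathbf{H}^{1/2}(\partial\Omega)$ with $\mathbf{g}|_{\Gamma_0}=\boldsymbol\xi_3$, $\mathbf{g}\cdot\mathbf{n}=0$ on $\Gamma_0\setminus\{x_3=0\}$ and $g_3=0$ on $\Gamma_1$, hence $\int_{\partial\Omega}\mathbf{g}\cdot\mathbf{n}=0$; the Hopf lemma then provides $\mathbf{h}_1^{\ast}\in\widetilde{\mathbf{X}}$ with $\mathbf{h}_1^{\ast}|_{\Gamma_0}=\boldsymbol\xi_3$, and the construction of \cite{Casas} provides $h_2^{\ast}\in H^1(\Omega)$ with $h_2^{\ast}|_{\{x_3=0\}}=\xi_4$ and $\frac{\partial h_2^{\ast}}{\partial\mathbf{n}}+Bh_2^{\ast}=0$ on $\Gamma_1$. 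Writing $\mathbf{h}_1=\mathbf{h}_1^{\ast}+\mathbf{w}$, $h_2=h_2^{\ast}+s$ with $[\mathbf{w},s]\in\mathbf{X}_0\times Y$, the problem reduces to
\begin{align*}
&Pr\,a(\mathbf{w},\mathbf{v})+Pr\,M\,b_1(s,\mathbf{v})+c(\hat{\mathbf{u}},\mathbf{w},\mathbf{v})+c(\mathbf{w},\hat{\mathbf{u}},\mathbf{v})-Pr\,R\,(s,v_3)_{L^2(\Omega)}=\langle\boldsymbol\ell_1,\mathbf{v}\rangle,\quad\forall\mathbf{v}\in\mathbf{X}_0,\\
&c_1(\hat{\mathbf{u}},s,W)+c_1(\mathbf{w},\hat{\theta},W)+a_1(s,W)+\langle Bs,W\rangle_{\Gamma_1}=\langle\ell_2,W\rangle,\quad\forall W\in Y,
\end{align*}
where $\boldsymbol\ell_1\in\mathbf{X}_0'$, $\ell_2\in Y'$ collect $\boldsymbol\xi_1$, $\xi_2$ and the continuously bounded contributions of $\mathbf{h}_1^{\ast}$, $h_2^{\ast}$. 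This is precisely the linear operator treated in Lemma~\ref{lemexissolpl}, now with $\hat{\mathbf{u}},\hat{\theta}$ playing the role of coefficients, and I would solve it exactly as there: testing with $[\mathbf{v},W]=[\mathbf{w},s]$ and using $c(\hat{\mathbf{u}},\mathbf{w},\mathbf{w})=0$ and $c_1(\hat{\mathbf{u}},s,s)=0$ (legitimate since $\hat{\mathbf{u}}\in\widetilde{\mathbf{X}}$, $\mathbf{w}\in\mathbf{X}_0$, $s\in Y$, cf. Lemma~\ref{lemalt}), the generalized Poincaré inequality \eqref{poincare1}, the bounds $|c(\mathbf{w},\hat{\mathbf{u}},\mathbf{w})|\le C\|\hat{\mathbf{u}}\|_{H^1(\Omega)}\|\nabla\mathbf{w}\|_{L^2(\Omega)}^2$ and $|c_1(\mathbf{w},\hat{\theta},s)|=|c_1(\mathbf{w},s,\hat{\theta})|\le\tfrac14\|\nabla s\|_{L^2(\Omega)}^2+C\|\hat{\theta}\|_{H^1(\Omega)}^2\|\nabla\mathbf{w}\|_{L^2(\Omega)}^2$, and absorbing the cross terms $Pr\,M\,b_1(s,\mathbf{w})$ and $Pr\,R\,(s,w_3)_{L^2(\Omega)}$ by Young's inequality with weights tuned to $M$ and $R$, the combined bilinear form is coercive on $\mathbf{X}_0\times Y$ with exactly the constant $\beta_0$ of \eqref{rpc1}. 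Continuity of the form and of the right-hand side being immediate, the Lax--Milgram theorem produces a (unique) $[\mathbf{w},s]$; then $\mathbf{h}_1=\mathbf{h}_1^{\ast}+\mathbf{w}\in\widetilde{\mathbf{X}}$ and $h_2=h_2^{\ast}+s\in H^1(\Omega)$ realize $[\CF_{1\mathbf{z}}(\hat{\mathbf{z}})\mathbf{t},\CF_{2\mathbf{z}}(\hat{\mathbf{z}})\mathbf{t},\mathbf{h}_1|_{\Gamma_0},h_2|_{\{x_3=0\}}]=[\boldsymbol\xi_1,\xi_2,\boldsymbol\xi_3,\xi_4]$, and, this quadruple being arbitrary in $\mathbb{M}$, $\hat{\mathbf{z}}$ is a regular point.

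The main obstacle is the coercivity bookkeeping just sketched: one has to track precisely how the coupling coefficients $Pr\,M$, $Pr\,R$ and the state norms $\|\hat{\mathbf{u}}\|_{H^1(\Omega)}$, $\|\hat{\theta}\|_{H^1(\Omega)}$ enter the two absorbed cross terms, and check that the hypotheses ``$Pr$ large, $M,R$ small'' are exactly what keeps the resulting constant $\beta_0$ of \eqref{rpc1} positive; the appearance of $\|\hat{\theta}\|_{H^1(\Omega)}^2$ (rather than its first power) is the one slightly delicate point and comes from Young's inequality applied to $c_1(\mathbf{w},s,\hat{\theta})$. Everything else — the two liftings, the boundedness of $\boldsymbol\ell_1,\ell_2$, the Lax--Milgram step, and the reduction via the conical hull of the control sets (where one simply takes the control components of $\mathbf{t}$ equal to zero) — is routine and parallels Section~3. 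A small but necessary point is that the traces must be lifted inside $\widetilde{\mathbf{X}}$ and inside the Robin-compatible part of $H^1(\Omega)$, so that the full pair $[\mathbf{h}_1,h_2]$ lies in the spaces occurring in $\mathbb{G}$ and realizes $\boldsymbol\xi_3,\xi_4$ exactly.
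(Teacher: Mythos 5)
Your argument is correct and follows essentially the same route as the paper's proof: reduce the regular point condition to surjectivity of the linearized operator with the control directions chosen so that their contributions drop out, lift the prescribed traces via the Hopf lemma and the Casas-type construction of Section~3, and solve the resulting coupled linear system on $\mathbf{X}_0\times Y$ by Lax--Milgram, the coercivity constant being exactly $\beta_0$ of (\ref{rpc1}) after absorbing $c(\mathbf{w},\hat{\mathbf{u}},\mathbf{w})$, $c_1(\mathbf{w},\hat{\theta},s)$ and the $Pr\,M$, $Pr\,R$ cross terms. The only cosmetic difference is that the paper takes $[\mathbf{r},\varrho,\tau]=[\hat{\mathbf{g}},\hat{\phi}_1,\hat{\phi}_2]$ (keeping a harmless $\hat{\phi}_1$ term in the right-hand side) instead of the zero cone element you use, which changes nothing of substance.
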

\begin{proof}
Given  $[{\bf a},{b},{\bf c},{d}]\in\mathbb{M}$, it is sufficient to
show the existence of
$\mathbf{t}=[\mathbf{h}_{1},h_{2},\mathbf{r},\varrho,\tau] \in
\mathbb{G}$ such that
\begin{eqnarray}\label{suf1}
Pr\, a(\mathbf{h}_1,\mathbf{v})+Pr\, M\, b_{1}(h_{2},\mathbf{v})+c(\hat{\mathbf{u}},\mathbf{h}_{1},\mathbf{v})+c(\mathbf{h}_{1},\hat{\mathbf{u}},\mathbf{v})-Pr R (h_{2},{v}_{3})_{L^{2}(\Omega)}=\langle {\bf a}, \mathbf{v} \rangle,\;\forall \mathbf{v}\in \mathbf{X}_{0},\\
c_{1}(\hat{\mathbf{u}},h_{2},W)+c_{1}(\mathbf{h}_{1},\hat{\theta},W) +a_{1}(h_{2},W)+\left\langle Bh_{2},W\right\rangle _{\Gamma_{1}}-\left\langle \varrho,W\right\rangle _{\Gamma_{0}\backslash\{x_{3}=0\}}=\langle b, W \rangle,\;\forall W\in Y,\label{suf2}\\
\mathbf{h}_{1}|_{\Gamma_{0}}={\bf c}+\mathcal{B}_1(\mathbf{r}-\hat{\mathbf{g}}),\label{suf3}\\
 h_{2}|_{\{x_{3}=0\}} = d+(\tau-\hat{\phi}_{2}).\label{suf4}
\end{eqnarray}
Setting
$[\mathbf{r},\varrho,\tau]=[\hat{\mathbf{g}},\hat{\phi}_{1},\hat{\phi}_{2}],$
we have that $\mathbf{h}_{1}|_{\Gamma_{0}}={\bf c}$ and
$h_{2}|_{\{x_{3}=0\}} = d.$ Then, proceeding as in the beginning
 of Subsection \ref{solweak}, we can prove that there exist $[\mathbf{h}^\epsilon_{1},h_2^\delta]\in
\widetilde{\mathbf{X}} \times H^{1}(\Omega)$ such that
$\left.\mathbf{h}^\epsilon_{1}\right|_{\Gamma_0}={\bf c}$ and
$\left. h_2^\delta\right|_{\{x_3=0\}}=d$. Therefore, rewriting the
unknowns $\mathbf{h}_1,h_2$ in the form
$\mathbf{h}_1=\mathbf{h}_1^\epsilon+\widetilde{\mathbf{h}}_1,$
$h_2=h_2^\delta+\widetilde{h}_2$ with
$[\widetilde{\mathbf{h}}_1,\widetilde{h}_2]\in \mathbb{H}$ new
unknown functions, from (\ref{suf1})-(\ref{suf4}), we obtain the
following linear system:
\begin{eqnarray}\label{suf1b}
Pr\, a(\widetilde{\mathbf{h}}_1,\mathbf{v})+Pr\, M\, b_{1}(\widetilde{h}_{2},\mathbf{v})+c(\hat{\mathbf{u}},\widetilde{\mathbf{h}}_{1},\mathbf{v})+c(\widetilde{\mathbf{h}}_{1},\hat{\mathbf{u}},\mathbf{v})-Pr R (\widetilde{h}_{2},{v}_{3})_{L^{2}(\Omega)}=\langle \widetilde{{\bf a}}, \mathbf{v} \rangle,\;\forall \mathbf{v}\in \mathbf{X}_{0},\\
c_{1}(\hat{\mathbf{u}},\widetilde{h}_{2},W)+c_{1}(\widetilde{\mathbf{h}}_{1},\hat{\theta},W)
+a_{1}(\widetilde{h}_{2},W)+\left\langle
B\widetilde{h}_{2},W\right\rangle _{\Gamma_{1}}=\langle
\widetilde{b}, W \rangle,\;\forall W\in Y,\label{suf2b}
\end{eqnarray}
where $$\langle \widetilde{{\bf a}}, \mathbf{v} \rangle=\langle {\bf
a}, \mathbf{v} \rangle-Pr\,
a(\mathbf{h}^{\epsilon}_1,\mathbf{v})-Pr\, M\,
b_{1}(h_{2}^\delta,\mathbf{v})-c(\hat{\mathbf{u}},\mathbf{h}^\epsilon_{1},\mathbf{v})-c(\mathbf{h}^\epsilon_{1},\hat{\mathbf{u}},\mathbf{v})+Pr
R (h_{2}^\delta,{v}_{3})_{L^{2}(\Omega)},$$
$$\langle
\widetilde{b},W \rangle=\langle b, W
\rangle-c_{1}(\hat{\mathbf{u}},h_{2}^\delta,W)-c_{1}(\mathbf{h}^\epsilon_{1},\hat{\theta},W)-a_{1}(h^\delta_{2},W)-\left\langle
B h^\delta_{2},W\right\rangle _{\Gamma_{1}}+\left\langle
\hat{\phi}_{1},W\right\rangle _{\Gamma_{0}\backslash\{x_{3}=0\}}.$$
In order to prove the existence of a solution for
(\ref{suf1b})-(\ref{suf2b}), we will apply the Lax-Milgram theorem.
For that, we consider the bilinear form
$A:\mathbb{H}\times\mathbb{H}\rightarrow \mathbb{R}$ defined by
\begin{eqnarray}\label{suf1c}
&A([\widetilde{\mathbf{h}}_1,\widetilde{h}_{2}],[\mathbf{v},W])&\!\!\!\!\!=
Pr\, a(\widetilde{\mathbf{h}}_1,\mathbf{v})+Pr\, M\, b_{1}(\widetilde{h}_{2},\mathbf{v})+c(\hat{\mathbf{u}},\widetilde{\mathbf{h}}_{1},\mathbf{v})+c(\widetilde{\mathbf{h}}_{1},\hat{\mathbf{u}},\mathbf{v})-Pr R (\widetilde{h}_{2},{v}_{3})_{L^{2}(\Omega)}\nonumber\\
&&+c_{1}(\hat{\mathbf{u}},\widetilde{h}_{2},W)+c_{1}(\widetilde{\mathbf{h}}_{1},\hat{\theta},W)
+a_{1}(\widetilde{h}_{2},W)+\left\langle
B\widetilde{h}_{2},W\right\rangle _{\Gamma_{1}},
\end{eqnarray}
and $I:\mathbb{H}\rightarrow \mathbb{R}$ defined by
$I[\mathbf{v},W]:= \langle \widetilde{{\bf a}}, \mathbf{v}
\rangle+\langle \widetilde{b},W \rangle$. Thus, we rewrite
(\ref{suf1b})-(\ref{suf2b}) as
\begin{eqnarray}\label{suf1d}
A([\widetilde{\mathbf{h}}_1,\widetilde{h}_{2}],[\mathbf{v},W])=I[\mathbf{v},W].
\end{eqnarray}
It is not difficult to prove that $A(\cdot,\cdot)$ is continuous and
$I\in \mathbb{H}'$. Now we prove the
$\mathbb{H}\times\mathbb{H}$-coercivity of $A$. For that, taking
$[\mathbf{v},W]=[\widetilde{\mathbf{h}}_1,\widetilde{h}_{2}]$ in
(\ref{suf1c}), and using the H\"older, Poincar\'e and Young
inequalities and Sobolev embeddings we get
\begin{eqnarray}
A([\widetilde{\mathbf{h}}_1,\widetilde{h}_{2}],[\widetilde{\mathbf{h}}_1,\widetilde{h}_{2}])\!\!\!\!\!&=&\!\!\!\!\!
Pr\Vert \nabla \widetilde{\mathbf{h}}_1\Vert^2_{L^{2}(\Omega)}+Pr\, M\, b_{1}(\widetilde{h}_{2},\widetilde{\mathbf{h}}_1)+c(\widetilde{\mathbf{h}}_{1},\hat{\mathbf{u}},\widetilde{\mathbf{h}}_1)-Pr R (\widetilde{h}_{2},\widetilde{h}_{1_{3}})_{L^{2}(\Omega)}\nonumber\\
&&\!\!\!\!\!+c_{1}(\widetilde{\mathbf{h}}_{1},\hat{\theta},\widetilde{h}_{2})
+\Vert \nabla\widetilde{h}_{2}\Vert^2_{L^{2}(\Omega)}+
B\Vert\widetilde{h}_{2}\Vert_{L^{2}(\Gamma_{1})}^{2}\nonumber\\
&\geq&\!\!\!\!\! \left(Pr-C Pr (M+R)-C\Vert \hat{\mathbf{u}}\Vert_{H^1(\Omega)}-C\Vert \hat{\theta}\Vert_{H^1(\Omega)}^{2}\right)\Vert \nabla\widetilde{\mathbf{h}}_1\Vert^2_{L^2(\Omega)}\nonumber\\
&&\!\!\!\!\!+\left(1-C Pr(R+M)-\displaystyle\frac{1}{2}\right)\Vert \nabla\widetilde{h}_{2}\Vert^2_{L^2(\Omega)}\nonumber\\
&\geq&\!\!\!\!\!\beta_0 \Vert
[\widetilde{\mathbf{h}}_1,\widetilde{h}_{2}]\Vert_{\mathbb{H}}^{2},\label{suf2v}
\end{eqnarray}
where $\beta_0=C\min\left\{Pr-C \left(Pr (M+R)+\Vert
\hat{\mathbf{u}}\Vert_{H^1(\Omega)}+\Vert
\hat{\theta}\Vert_{H^1(\Omega)}^{2}\right),\displaystyle\frac{1}{2}-C
Pr(R+M)\right\}>0$. Therefore, from (\ref{suf1d}) and (\ref{suf2v})
and the Lax-Milgram theorem we conclude the existence of
$[\widetilde{\mathbf{h}}_1,\widetilde{h}_{2}]\in\mathbb{H}$ solution
of (\ref{suf1b})-(\ref{suf2b}), and consequently, we obtain that
$[{\mathbf{h}}_1,{h}_{2}]\in \widetilde{\textbf{X}}\times
H^1(\Omega)$ is solution of (\ref{suf1})-(\ref{suf4}).
\end{proof}

In the next theorem, we will prove the existence of Lagrange
multipliers provided a local optimal solution
$\hat{\mathbf{z}}\equiv[\hat{\mathbf{u}},\hat{\theta},\hat{\mathbf{g}},\hat{\phi}_{1},\hat{\phi}_{2}]\in
S_{ad}$ verifies the regular point condition (see Lemma
\ref{lemma8}).
\begin{theorem}\label{ml1}
Let
$\hat{\mathbf{z}}\equiv[\hat{\mathbf{u}},\hat{\theta},\hat{\mathbf{g}},\hat{\phi}_{1},\hat{\phi}_{2}]\in
\CS_{ad}$ be a local optimal solution for the control problem
(\ref{eq:funcionalmul2}) and assume (\ref{rpc1}). Then, there exist
Lagrange multipliers $[{\boldsymbol
\lambda_1},\lambda_2,{\boldsymbol \lambda_3},\lambda_4]\in
{\mathbb{H}}\times(\widetilde{\mathbf{H}}^{1/2}_{e}(\Gamma_{0}))'\times(H^{1/2}_{e}(\{x_{3}=0\}))'$
such that for all $[\mathbf{h}_{1},h_{2},\mathbf{r},\varrho,\tau]
\in \widetilde{\mathbf{X}}\times H^1(\Omega)\times
\mathcal{C}(\hat{\mathbf{g}})\times
\mathcal{C}(\hat{\phi}_{1})\times \mathcal{C}(\hat{\phi}_{2})$ it
holds:
\begin{eqnarray}\label{lagmul}
\gamma_{1} (\text{rot } \hat{\mathbf{u}},\text{rot }
\mathbf{h}_{1})_{L^2(\Omega)}
\!+\!\gamma_{2}(\hat{\mathbf{u}}\!-\!\mathbf{u}_{d},\mathbf{h}_{1})_{L^2(\Omega)}\!+\!\gamma_{3}(\hat{\theta}-
\theta_{d}, h_{2})_{L^2(\Omega)}\!+\gamma_5(
\hat{\phi}_{1},\varrho)_{H^{\frac{1}{2}}(\Gamma_{0}\setminus\left\{
x_{3}=0\right\}) }\nonumber\\
+\gamma_4(
\hat{\mathbf{g}},\mathbf{r})_{H^{\frac{1}{2}}(\Gamma^1_0)}\!+\gamma_6(\hat{\phi}_{2},\tau)_{H^{\frac{1}{2}}(\left\{
x_{3}=0\right\})}\!-Pr\, a(\mathbf{h}_1,{\boldsymbol \lambda_1})-Pr\, M\, b_{1}(h_{2},{\boldsymbol \lambda_1})-c(\hat{\mathbf{u}},\mathbf{h}_{1},{\boldsymbol \lambda_1})-c(\mathbf{h}_{1},\hat{\mathbf{u}},{\boldsymbol \lambda_1})\nonumber\\
+Pr R (h_{2},{\lambda_{1_{3}}})_{L^{2}(\Omega)}
-c_{1}(\hat{\mathbf{u}},h_{2},\lambda_2)
-c_{1}(\mathbf{h}_{1},\hat{\theta},\lambda_2)-a_{1}(h_{2},\lambda_2)-\left\langle
Bh_{2},\lambda_2\right\rangle _{\Gamma_{1}}+\left\langle
\varrho,\lambda_2\right\rangle
_{\Gamma_{0}\backslash\{x_{3}=0\}}\!\nonumber\\
-\langle
{\boldsymbol\lambda_3},\mathbf{h}_{1}\!\mid_{\Gamma_0}\!-\mathcal{B}_1\mathbf{r}\rangle_{(\widetilde{\mathbf{H}}^{1/2}_{e}(\Gamma_{0}))',\widetilde{\mathbf{H}}^{1/2}_{e}(\Gamma_{0})}\!-\langle
\lambda_4,h_2\!\mid_{\{x_{3}=0\}}\!-\tau\rangle_{(H^{1/2}_{e}(\{x_{3}=0\}))',H^{1/2}_{e}(\{x_{3}=0\})}\geq
0,\ \ \ \ \
\end{eqnarray}
where $\mathcal{C}(\hat{\mathbf{g}})\times
\mathcal{C}(\hat{\phi}_{1})\times
\mathcal{C}(\hat{\phi}_{2})\!=\!\left\{\![\omega_1({\mathbf{g}}\!-\!\hat{\mathbf{g}}),\omega_2(\phi_1\!-\!\hat{\phi}_{1}),\omega_3(\phi_2\!-\!\hat{\phi}_{2})],\;
\omega_1,\omega_2,\omega_3\geq 0,\; [\mathbf{g},\phi_1,\phi_2]\in
\mathcal{\mathbf{U}}_{1}\! \times\mathcal{U}_{2}\!
\times\mathcal{U}_{3}\!\right\}\!.$
\end{theorem}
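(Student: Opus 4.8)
The plan is to deduce (\ref{lagmul}) from the abstract Lagrange multiplier theorem for optimization problems in Banach spaces with an operator equality constraint and a convex-set constraint, in the form stated in \cite{zowe} (p.~50), following the strategy of \cite{delosReyes}. The problem (\ref{eq:funcionalmul2}) fits exactly into that framework: the cost $\CJ$ and the constraint operator $\CFF$ are Fr\'echet differentiable by Lemmas \ref{lemmaderfrecj} and \ref{lemmaderfref}; the feasible set is $\widetilde{\mathbf{X}}\times H^1(\Omega)\times\mathcal{\mathbf{U}}_{1}\times\mathcal{U}_{2}\times\mathcal{U}_{3}$, which is convex since the $\mathcal{U}_i$ are convex and the first two factors are linear subspaces; and $\hat{\mathbf{z}}$ is, by hypothesis, a local optimal solution which, under assumption (\ref{rpc1}), satisfies the regular point condition of \cite{zowe} by Lemma \ref{lemma8}. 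Thus all the structural hypotheses of the cited theorem are met.

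Applying that theorem, one obtains a Lagrange multiplier $\Lambda\in\mathbb{M}'$ such that, with the Lagrangian $\mathcal{L}(\mathbf{z})=\CJ(\mathbf{z})-\langle\Lambda,\CFF(\mathbf{z})\rangle_{\mathbb{M}',\mathbb{M}}$, we have $\mathcal{L}_{\mathbf{z}}(\hat{\mathbf{z}})(\mathbf{z}-\hat{\mathbf{z}})\geq 0$ for every $\mathbf{z}$ in the feasible set. The next step is to identify the components of $\mathbb{M}'$. Since $\mathbb{M}=\mathbf{X}'_{0}\times Y'\times\widetilde{\mathbf{H}}^{1/2}_{e}(\Gamma_{0})\times H^{1/2}_{e}(\{x_{3}=0\})$ and $\mathbf{X}_{0}$, $Y$ are Hilbert (hence reflexive) spaces, the first two factors of $\mathbb{M}'$ are canonically identified with $\mathbf{X}_{0}$ and $Y$; writing $\Lambda=[{\boldsymbol\lambda_1},\lambda_2,{\boldsymbol\lambda_3},\lambda_4]$ gives $[{\boldsymbol\lambda_1},\lambda_2]\in\mathbb{H}$, ${\boldsymbol\lambda_3}\in(\widetilde{\mathbf{H}}^{1/2}_{e}(\Gamma_{0}))'$ and $\lambda_4\in(H^{1/2}_{e}(\{x_{3}=0\}))'$, which is the asserted regularity of the multipliers.

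It then remains to make the inequality $\mathcal{L}_{\mathbf{z}}(\hat{\mathbf{z}})\mathbf{t}\geq 0$ explicit. Because the first two factors of the feasible set are linear subspaces, admissible increments $\mathbf{h}_1\in\widetilde{\mathbf{X}}$ and $h_2\in H^1(\Omega)$ are arbitrary (one may use $\pm\mathbf{h}_1$, $\pm h_2$), while convexity of the $\mathcal{U}_i$ forces the admissible control increments to range precisely over the cones $\mathcal{C}(\hat{\mathbf{g}})$, $\mathcal{C}(\hat{\phi}_{1})$, $\mathcal{C}(\hat{\phi}_{2})$ defined in the statement (increments of the form $\omega(\mathbf{g}-\hat{\mathbf{g}})$ with $\omega\geq 0$, $\mathbf{g}\in\mathcal{U}_1$, etc.). Substituting into $\mathcal{L}_{\mathbf{z}}(\hat{\mathbf{z}})\mathbf{t}=\CJ_{\mathbf{z}}(\hat{\mathbf{z}})\mathbf{t}-\langle{\boldsymbol\lambda_1},\CF_{1\mathbf{z}}(\hat{\mathbf{z}})\mathbf{t}\rangle-\langle\lambda_2,\CF_{2\mathbf{z}}(\hat{\mathbf{z}})\mathbf{t}\rangle-\langle{\boldsymbol\lambda_3},\CF_{3\mathbf{z}}(\hat{\mathbf{z}})\mathbf{t}\rangle-\langle\lambda_4,\CF_{4\mathbf{z}}(\hat{\mathbf{z}})\mathbf{t}\rangle$ the explicit expressions (\ref{derJ}) and (\ref{frechet1a}) yields exactly (\ref{lagmul}).

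The main obstacle is not in this theorem itself but is already absorbed into Lemma \ref{lemma8}: the verification of the regular point (surjectivity-type) condition for $\textbf{\textit{F}}_{\mathbf{z}}(\hat{\mathbf{z}})$, which is precisely what the smallness of $M,R$ and the largeness of $Pr$ in (\ref{rpc1}) buy, via the Lax--Milgram coercivity estimate (\ref{suf2v}). Once that is granted, the present proof is a bookkeeping exercise: identifying dual spaces and admissible cones, inserting the derivative formulas, and keeping track of signs. The only small technical care needed is that the trace maps $\mathbf{h}_1\mapsto\mathbf{h}_1|_{\Gamma_0}$ and $h_2\mapsto h_2|_{\{x_3=0\}}$ send $\widetilde{\mathbf{X}}$ and $H^1(\Omega)$ into $\widetilde{\mathbf{H}}^{1/2}_{e}(\Gamma_0)$ and $H^{1/2}_{e}(\{x_3=0\})$ respectively, so that the pairings with ${\boldsymbol\lambda_3}$ and $\lambda_4$ in (\ref{lagmul}) are well defined.
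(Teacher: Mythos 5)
Your proposal is correct and follows essentially the same route as the paper: invoke the Zowe--Kurcyusz multiplier theorem, using Lemma \ref{lemma8} for the regular point condition, and then substitute the derivative formulas of Lemmas \ref{lemmaderfref} and \ref{lemmaderfrecj} to obtain (\ref{lagmul}). The extra remarks you add (identification of the dual of $\mathbb{M}$, the cone structure of the admissible control increments, and the well-definedness of the trace pairings) are details the paper leaves implicit, but they do not change the argument.
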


\begin{proof}
From Lemma \ref{lemma8}, $\hat{\bf z}\in \mathcal{S}_{ad}$ satisfies
the regular point condition. Then, there exist Lagrange multipliers
$[{\boldsymbol \lambda_1},\lambda_2,{\boldsymbol
\lambda_3},\lambda_4]\in
{\mathbb{H}}\times(\widetilde{\mathbf{H}}^{1/2}_{e}(\Gamma_{0}))'\times(H^{1/2}_{e}(\{x_{3}=0\}))'$
such that
\begin{eqnarray*}
&\CJ_{\mathbf{z}}(\hat{\mathbf{z}})\mathbf{h}&\!\!\!\!\! -\langle
\CF_{1\mathbf{z}}(\hat{\mathbf{z}})\mathbf{h},{\boldsymbol\lambda_{1}}
\rangle_{\mathbf{X}'_{0},\mathbf{X}_{0}}-\langle
\CF_{2\mathbf{z}}(\hat{\mathbf{z}})\mathbf{h},\lambda_{2}
\rangle_{Y',Y}\nonumber\\
&& -
\langle{\boldsymbol\lambda_{3}},\CF_{3\mathbf{z}}(\hat{\mathbf{z}})\mathbf{h}\rangle_{(\widetilde{\mathbf{H}}^{1/2}_{e}(\Gamma_{0}))',\widetilde{\mathbf{H}}^{1/2}_{e}(\Gamma_{0})}
-
\langle\lambda_{4},\CF_{4\mathbf{x}}(\hat{\mathbf{z}})\mathbf{h}\rangle_{(H^{1/2}_{e}(\{x_{3}=0\}))',H^{1/2}_{e}(\{x_{3}=0\})}\geq0,
\end{eqnarray*}
for all $[\mathbf{h}_{1},h_{2},\mathbf{r},\varrho,\tau] \in
\widetilde{\mathbf{X}}\times H^1(\Omega)\times
\mathcal{C}(\hat{\mathbf{g}})\times \mathcal{C}(\hat{\phi}_{1})\times
\mathcal{C}(\hat{\phi}_{2})$. Thus, the proof of theorem follows from
(\ref{frechet1a})-(\ref{derJ}).
\end{proof}

\subsection{Optimality System}

\hspace{0.4cm}In this subsection, we derive the equations that are satisfied by
the Lagrange multipliers ${\boldsymbol \eta}=[{\boldsymbol
\lambda_1},\lambda_2,{\boldsymbol \lambda_3},\lambda_4]$ provided by
Theorem \ref{ml1}.

\begin{theorem} (Adjoint
Equations) Let
$\hat{\mathbf{z}}\equiv[\hat{\mathbf{u}},\hat{\theta},\hat{\mathbf{g}},\hat{\phi}_{1},\hat{\phi}_{2}]\in
\CS_{ad}$ be an optimal solution for the control problem
(\ref{eq:funcionalmul2}) and assume (\ref{rpc1}). Then, there exist
functions (Lagrange multipliers) ${\boldsymbol \eta}=[{\boldsymbol
\lambda_1},\lambda_2,{\boldsymbol \lambda_3},\lambda_4]\in
{\mathbb{H}}\times(\widetilde{\mathbf{H}}^{1/2}_{e}(\Gamma_{0}))'\times(H^{1/2}_{e}(\{x_{3}=0\}))'$
which satisfy, in a variational sense, the following adjoint
equations to the control problem (\ref{eq:funcionalmul2}):
\begin{equation}\label{eqad}
\left\{
\begin{array}
[c]{lll}%
Pr \Delta {\boldsymbol
\lambda_{1}}+(\hat{\mathbf{u}}\cdot\nabla){\boldsymbol \lambda_{1}}
-\nabla^{T}\hat{\mathbf{u}}\cdot{\boldsymbol \lambda_{1}}
-\lambda_{2}\nabla\hat{\theta} =\gamma_{1} \text{rot}(\text{rot }
\hat{\mathbf{u}})
-\gamma_{2}(\hat{\mathbf{u}}\!-\!\mathbf{u}_{d}) \text{ in } \Omega,\\
\Delta \lambda_{2}+(\hat{\mathbf{u}}\cdot \nabla)\lambda_{2}+ Pr M
\text{div}\displaystyle\left(\frac{\partial {\boldsymbol
\lambda_{1}}}{\partial x_{3}}\right)+Pr R\;\lambda_{1_{3}}=
-\gamma_{3}(\hat{\theta}-\theta_{d}) \text{ in } \Omega,\\
\text{div}\:{\boldsymbol \lambda_{1}}=0 \text{ in } \Omega,\\
{\boldsymbol \lambda_{1}}=0 \text{ on } \Gamma_{0}, \qquad
\lambda_{1_{3}}=0 \text{ on } \Gamma_1, \qquad {\boldsymbol
\lambda_{3}}=\gamma_{1}(\text{rot }\hat{\mathbf{u}}\!\times\!
\mathbf{n})- Pr(\nabla{\boldsymbol \lambda_{1}}\cdot \mathbf{n})
\;\text{ on }
\Gamma_{0},\\
\gamma_{1}(\text{rot }\hat{\mathbf{u}}\!\times\! \mathbf{n})-
Pr(\nabla{\boldsymbol \lambda_{1}}\cdot \mathbf{n})=0 \;\text{ on }
\Gamma_{1}, \qquad B\lambda_{2}+\left(\nabla \lambda_{2} + Pr M
\displaystyle\frac{\partial{\boldsymbol \lambda_{1}}}{\partial
x_{3}}\right)\cdot \mathbf{n}=0 \;\text{ on } \Gamma_{1},
\\
\lambda_{4}=-\left(\nabla \lambda_{2} + Pr M
\displaystyle\frac{\partial{\boldsymbol \lambda_{1}}}{\partial
x_{3}}\right)\cdot \mathbf{n} \;\text{ on } \{x_{3}=0\},\qquad
\lambda_{2}=0 \text { on } \{x_3=0\},\\
\left(\nabla \lambda_{2} + Pr M
\displaystyle\frac{\partial{\boldsymbol \lambda_{1}}}{\partial
x_{3}}\right)\cdot \mathbf{n}=0 \;\text{ on }
\Gamma_{0}\setminus\left\{ x_{3}=0\right\}.
\end{array}
\right.
\end{equation}
\end{theorem}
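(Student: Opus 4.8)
The plan is to derive the adjoint system (\ref{eqad}) directly from the variational inequality (\ref{lagmul}) of Theorem \ref{ml1}, by successively specializing the test directions $[\mathbf{h}_1,h_2,\mathbf{r},\varrho,\tau]$ and then integrating by parts. Since $0$ belongs to each of the cones $\mathcal{C}(\hat{\mathbf{g}})$, $\mathcal{C}(\hat{\phi}_1)$, $\mathcal{C}(\hat{\phi}_2)$, I would first set $\mathbf{r}=\mathbf{0}$, $\varrho=0$, $\tau=0$ and let $[\mathbf{h}_1,h_2]$ range over the linear space $\widetilde{\mathbf{X}}\times H^1(\Omega)$; replacing $[\mathbf{h}_1,h_2]$ by its negative converts (\ref{lagmul}) into an equality. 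Restricting further to $[\mathbf{h}_1,h_2]\in\mathbf{X}_0\times Y$, the boundary pairings involving ${\boldsymbol\lambda_3}$ and $\lambda_4$ drop because $\mathbf{h}_1|_{\Gamma_0}=0$ and $h_2|_{\{x_3=0\}}=0$, so one obtains a clean variational identity among $\CJ_{\mathbf{z}}$-terms and the terms $Pr\,a(\mathbf{h}_1,{\boldsymbol\lambda_1})$, $Pr\,M\,b_1(h_2,{\boldsymbol\lambda_1})$, $c(\hat{\mathbf{u}},\mathbf{h}_1,{\boldsymbol\lambda_1})$, $c(\mathbf{h}_1,\hat{\mathbf{u}},{\boldsymbol\lambda_1})$, $Pr\,R(h_2,\lambda_{1_3})_{L^2(\Omega)}$, $c_1(\hat{\mathbf{u}},h_2,\lambda_2)$, $c_1(\mathbf{h}_1,\hat{\theta},\lambda_2)$, $a_1(h_2,\lambda_2)$ and $\langle Bh_2,\lambda_2\rangle_{\Gamma_1}$.

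Step 1 (interior equations and the conditions on $\Gamma_1$). Taking $h_2=0$ and $\mathbf{h}_1\in\mathbf{X}_0$ arbitrary, I would integrate by parts in $a(\mathbf{h}_1,{\boldsymbol\lambda_1})$, rewrite $\gamma_1(\text{rot }\hat{\mathbf{u}},\text{rot }\mathbf{h}_1)_{L^2(\Omega)}=\gamma_1(\text{rot}(\text{rot }\hat{\mathbf{u}}),\mathbf{h}_1)_{L^2(\Omega)}-\gamma_1\langle\text{rot }\hat{\mathbf{u}}\times\mathbf{n},\mathbf{h}_1\rangle_{\partial\Omega}$, and transpose the trilinear terms via the antisymmetry relations (\ref{eq:altc}): $c(\hat{\mathbf{u}},\mathbf{h}_1,{\boldsymbol\lambda_1})=-c(\hat{\mathbf{u}},{\boldsymbol\lambda_1},\mathbf{h}_1)$ and $c(\mathbf{h}_1,\hat{\mathbf{u}},{\boldsymbol\lambda_1})=\int_\Omega\mathbf{h}_1\cdot(\nabla^{T}\hat{\mathbf{u}}\cdot{\boldsymbol\lambda_1})\,d\Omega$, the boundary contributions being controlled because $\hat{\mathbf{u}}\in\widetilde{\mathbf{X}}$ (so $\hat{\mathbf{u}}\cdot\mathbf{n}=0$ on $\Gamma_1\cup(\Gamma_0\setminus\{x_3=0\})$) and $\mathbf{h}_1$ vanishes on $\Gamma_0$. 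Since $\mathbf{h}_1$ ranges over divergence-free fields, this gives (up to a gradient term, which one may incorporate as an adjoint pressure) the first equation of (\ref{eqad}) in $\Omega$ together with the natural condition $\gamma_1(\text{rot }\hat{\mathbf{u}}\times\mathbf{n})-Pr(\nabla{\boldsymbol\lambda_1}\cdot\mathbf{n})=0$ on $\Gamma_1$; the conditions $\text{div }{\boldsymbol\lambda_1}=0$ and $\lambda_{1_3}=0$ on $\Gamma_1$ hold because ${\boldsymbol\lambda_1}\in\mathbf{X}_0$. Taking instead $\mathbf{h}_1=0$ and $h_2\in Y$ arbitrary, I would integrate by parts in $a_1(h_2,\lambda_2)$ and in $b_1(h_2,{\boldsymbol\lambda_1})=\int_\Omega\nabla h_2\cdot\partial_{x_3}{\boldsymbol\lambda_1}\,d\Omega$, use $c_1(\hat{\mathbf{u}},h_2,\lambda_2)=-c_1(\hat{\mathbf{u}},\lambda_2,h_2)$ from (\ref{eq:altc1}), and, since $h_2$ vanishes on $\{x_3=0\}$, collect the interior terms into the second equation of (\ref{eqad}) and the boundary terms over $\Gamma_1$ into the Robin condition $B\lambda_2+(\nabla\lambda_2+Pr\,M\,\partial_{x_3}{\boldsymbol\lambda_1})\cdot\mathbf{n}=0$; moreover $\lambda_2=0$ on $\{x_3=0\}$ because $\lambda_2\in Y$.

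Step 2 (the boundary multipliers ${\boldsymbol\lambda_3},\lambda_4$ and the remaining boundary conditions). With the interior equations known, I would keep $\mathbf{r}=\mathbf{0}$, $\varrho=0$, $\tau=0$ but now let $\mathbf{h}_1$ run over all of $\widetilde{\mathbf{X}}$ (and, separately, $h_2$ over all of $H^1(\Omega)$ with $\mathbf{h}_1=0$). The integration by parts then leaves boundary pairings over $\partial\Omega$: matching the velocity pairing on $\Gamma_0$ against $-\langle{\boldsymbol\lambda_3},\mathbf{h}_1|_{\Gamma_0}\rangle$ identifies ${\boldsymbol\lambda_3}=\gamma_1(\text{rot }\hat{\mathbf{u}}\times\mathbf{n})-Pr(\nabla{\boldsymbol\lambda_1}\cdot\mathbf{n})$ on $\Gamma_0$, while matching the temperature pairing on $\{x_3=0\}$ against $-\langle\lambda_4,h_2|_{\{x_3=0\}}\rangle$ identifies $\lambda_4=-(\nabla\lambda_2+Pr\,M\,\partial_{x_3}{\boldsymbol\lambda_1})\cdot\mathbf{n}$ there; on $\Gamma_0\setminus\{x_3=0\}$ there is no temperature multiplier, so the corresponding boundary term must vanish, yielding the last line of (\ref{eqad}). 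Finally, choosing $\mathbf{h}_1=0$, $h_2=0$ and $[\mathbf{r},\varrho,\tau]$ in the cones reproduces the variational inequalities for the controls that, together with (\ref{eqad}), close the optimality system.

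I expect the delicate point to be precisely the integration by parts that converts the variational identities into the strong form (\ref{eqad}), because of the interplay between the crossed boundary conditions and the vorticity functional: the term $\frac{\gamma_1}{2}\|\text{rot }\mathbf{u}\|^2_{L^2(\Omega)}$ contributes both the interior source $\gamma_1\,\text{rot}(\text{rot }\hat{\mathbf{u}})$ and the boundary flux $\gamma_1(\text{rot }\hat{\mathbf{u}}\times\mathbf{n})$, and one must carefully split $\partial\Omega$ into $\Gamma_0$, $\Gamma_1$ (and $\{x_3=0\}$ for the temperature) and decide, on each piece, which portion of each boundary term is absorbed into a Lagrange multiplier and which portion survives as a natural condition, keeping in mind that on $\Gamma_1$ only the tangential part of $\mathbf{h}_1$ is free while $h_{1_3}=0$. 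A secondary technical issue is the bookkeeping for the nonlinear terms, transposing $c(\hat{\mathbf{u}},\mathbf{h}_1,{\boldsymbol\lambda_1})$, $c(\mathbf{h}_1,\hat{\mathbf{u}},{\boldsymbol\lambda_1})$, $c_1(\hat{\mathbf{u}},h_2,\lambda_2)$, $c_1(\mathbf{h}_1,\hat{\theta},\lambda_2)$ into the convective terms $(\hat{\mathbf{u}}\cdot\nabla){\boldsymbol\lambda_1}$, $-\nabla^{T}\hat{\mathbf{u}}\cdot{\boldsymbol\lambda_1}$, $(\hat{\mathbf{u}}\cdot\nabla)\lambda_2$, $-\lambda_2\nabla\hat{\theta}$ by means of (\ref{eq:altc})--(\ref{eq:altc1}) and the membership $\hat{\mathbf{u}}\in\widetilde{\mathbf{X}}$. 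Since the multipliers are only known a priori to lie in $\mathbb{H}$ and in the relevant dual trace spaces, all these boundary identities must be read in the trace/duality sense rather than pointwise, which is exactly what "in a variational sense" means in the statement.
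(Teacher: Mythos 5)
Your proposal follows essentially the same route as the paper: set $[\mathbf{r},\varrho,\tau]=[\boldsymbol{0},0,0]$ in (\ref{lagmul}), use linearity in $[\mathbf{h}_1,h_2]$ to turn the inequality into an equality, split into the cases $h_2=0$ and $\mathbf{h}_1=0$, apply the Green formula together with the antisymmetry relations (\ref{eq:altc})--(\ref{eq:altc1}), obtain the interior equations from restricted (divergence-free/vanishing-trace) test functions, and then read off ${\boldsymbol\lambda_3}$, $\lambda_4$ and the natural boundary conditions from the general test functions. This matches the paper's proof (its equations (\ref{sistopt1.1})--(\ref{sistopt3.1b})), so the proposal is correct and takes the same approach.
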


\begin{proof} From (\ref{lagmul}) we obtain, taking
$[\mathbf{r},\varrho,\tau]=[{\boldsymbol 0},0,0]$, that for all
$[\mathbf{h}_{1},h_{2}]\in \widetilde{\mathbf{X}}\times
H^1(\Omega)$,
\begin{eqnarray}\label{sistopt1.1}
\gamma_{1} (\text{rot } \hat{\mathbf{u}},\text{rot }
\mathbf{h}_{1})_{L^2(\Omega)}
\!+\!\gamma_{2}(\hat{\mathbf{u}}\!-\!\mathbf{u}_{d},\mathbf{h}_{1})_{L^2(\Omega)}\!+\!\gamma_{3}(\hat{\theta}-
\theta_{d}, h_{2})_{L^2(\Omega)}-Pr\, a(\mathbf{h}_1,{\boldsymbol \lambda_1})-Pr  M\, b_{1}(h_{2},{\boldsymbol \lambda_1})\nonumber\\
-c(\hat{\mathbf{u}},\mathbf{h}_{1},{\boldsymbol
\lambda_1})-c(\mathbf{h}_{1},\hat{\mathbf{u}},{\boldsymbol
\lambda_1})+Pr R (h_{2},{\lambda_{1_{3}}})_{L^{2}(\Omega)}
-c_{1}(\hat{\mathbf{u}},h_{2},\lambda_2)
-c_{1}(\mathbf{h}_{1},\hat{\theta},\lambda_2)-a_{1}(h_{2},\lambda_2)\nonumber\\
-\left\langle Bh_{2},\lambda_2\right\rangle _{\Gamma_{1}} -\langle
{\boldsymbol\lambda_3},\mathbf{h}_{1}\!\mid_{\Gamma_0}\rangle_{(\widetilde{\mathbf{H}}^{1/2}_{e}(\Gamma_{0}))',\widetilde{\mathbf{H}}^{1/2}_{e}(\Gamma_{0})}\!-\langle
\lambda_4,h_2\!\mid_{\{x_{3}=0\}}\rangle_{(H^{1/2}_{e}(\{x_{3}=0\}))',H^{1/2}_{e}(\{x_{3}=0\})}=
0.\ \ \ \
\end{eqnarray}
Taking $h_{2}=0$ in (\ref{sistopt1.1}), we get \vspace{-0,2 cm}
\begin{eqnarray}\label{sistopt2.1}
\gamma_{1} (\text{rot } \hat{\mathbf{u}},\text{rot }
\mathbf{h}_{1})_{L^2(\Omega)}
\!+\!\gamma_{2}(\hat{\mathbf{u}}\!-\!\mathbf{u}_{d},\mathbf{h}_{1})_{L^2(\Omega)}
-Pr\, a(\mathbf{h}_1,{\boldsymbol
\lambda_1})-c(\hat{\mathbf{u}},\mathbf{h}_{1},{\boldsymbol
\lambda_1})\nonumber\\
-c(\mathbf{h}_{1},\hat{\mathbf{u}},{\boldsymbol
\lambda_1})-c_{1}(\textbf{h}_{1},\hat{\theta},\lambda_{2}) -\langle
{\boldsymbol
\lambda_{3}},\mathbf{h}_{1}\!\mid_{\Gamma_0}\rangle_{(\widetilde{\mathbf{H}}^{1/2}_{e}(\Gamma_{0}))',\widetilde{\mathbf{H}}^{1/2}_{e}(\Gamma_{0})}=0,\;\;
\forall \textbf{h}_{1}\in \widetilde{\textbf{X}},
\end{eqnarray}
and thus, using the Green formula, we obtain
\begin{eqnarray}\label{sistopt2.1a}
-\gamma_{1} \displaystyle\int_\Omega\text{rot(rot }
\hat{\mathbf{u}})\cdot \mathbf{h}_{1}\; d\Omega
+\gamma_{1}\int_{\partial\Omega}(\text{rot
}\hat{\textbf{u}}\!\times\! \textbf{n})\!\cdot\!\textbf{h}_{1}\;dS
+\gamma_{2}\int_\Omega(\hat{\mathbf{u}}\!-\!\mathbf{u}_{d})\cdot\mathbf{h}_{1}\;
d\Omega +Pr\, \int_\Omega \Delta{\boldsymbol
\lambda_1}\cdot\mathbf{h}_1\; d\Omega
\nonumber\\
-Pr\int_{\partial\Omega}\frac{\partial\lambda_{1}}{\partial\textbf{n}
}\cdot\textbf{h}_{1}\:
dS+\int_\Omega[(\hat{\mathbf{u}}\cdot\nabla){\boldsymbol
\lambda_1}]\cdot \mathbf{h}_{1}\;d\Omega -\int_\Omega
\nabla^{T}\hat{\mathbf{u}}\cdot{\boldsymbol \lambda_1}\cdot
\mathbf{h}_{1}\;
d\Omega-\int_\Omega \lambda_{2}\nabla\hat{\theta}\cdot\mathbf{h}_{1}\; d\Omega\nonumber\\
-\langle {\boldsymbol
\lambda_{3}},\mathbf{h}_{1}\!\mid_{\Gamma_0}\rangle_{(\widetilde{\mathbf{H}}^{1/2}_{e}(\Gamma_{0}))',\widetilde{\mathbf{H}}^{1/2}_{e}(\Gamma_{0})}=0,\;\;
\forall \textbf{h}_{1}\in \widetilde{\textbf{X}}.
\end{eqnarray}
Similarly, taking $\mathbf{h}_{1}=0$ in (\ref{sistopt1.1}), we get
\begin{eqnarray}\label{sistopt3.1}
\gamma_{3}(\hat{\theta}- \theta_{d}, h_{2})_{L^2(\Omega)}
-Pr M\;b_{1}(h_{2},{\boldsymbol\lambda_{1}})+Pr R\;(h_{2},\lambda_{1_{3}})_{L^2(\Omega)}-c_{1}(\hat{\textbf{u}},h_{2},\lambda_{2})\nonumber\\
-a_{1}(h_{2},\lambda_{2})-\langle B
h_{2},\lambda_{2}\rangle_{\Gamma_{1}}- \langle
\lambda_{4},h_2\!\mid_{\{x_{3}=0\}}\rangle_{(H^{1/2}_{e}(\{x_{3}=0\}))',H^{1/2}_{e}(\{x_{3}=0\})}=0,\;\;\forall
h_{2}\in H^1(\Omega),
\end{eqnarray}
and thus, using the Green formula, we obtain
\begin{eqnarray}\label{sistopt3.1a}
\gamma_{3}\!\displaystyle\int_\Omega\!(\hat{\theta}- \theta_{d})
h_{2}\; d\Omega +Pr M\!\int_\Omega\! \text{div}\left(\frac{\partial
{\boldsymbol \lambda_1}}{\partial x_{3}} \right) h_{2}\; d\Omega -
Pr M\!\int_{\partial\Omega}\!\left(\frac{\partial {\boldsymbol
\lambda_1}}{\partial x_{3}}\cdot \mathbf{n} \right) h_{2}\; dS
+Pr R\int_{\Omega}\lambda_{1_{3}} h_{2}\;d\Omega\nonumber\\
+\int_{\Omega}[(\hat{\textbf{u}}\cdot
\nabla)\lambda_{2}]h_{2}\;d\Omega +\int_{\Omega}\Delta \lambda_{2}\;
h_{2}\;d\Omega - \int_{\partial\Omega} \frac{\partial
\lambda_{2}}{\partial \mathbf{n}}\; h_{2}\;dS -
B\int_{\Gamma_{1}}\lambda_{2} h_{2}\; dS \nonumber\\- \langle
\lambda_{4},h_2\!\mid_{\{x_{3}=0\}}\rangle_{(H^{1/2}_{e}(\{x_{3}=0\}))',H^{1/2}_{e}(\{x_{3}=0\})}=0,\;\;\forall
h_{2}\in H^1(\Omega).
\end{eqnarray}
Observe that, if additionally we take the test functions
$\mathbf{h}_{1}\in \mathbf{V}$ in (\ref{sistopt2.1}) and $h_{2}\in
H^1_0(\Omega)$ in (\ref{sistopt3.1}), we get
\begin{eqnarray}\label{sistopt2.1b}
-\gamma_{1} \displaystyle\int_\Omega\text{rot(rot }
\hat{\mathbf{u}})\cdot \mathbf{h}_{1}\; d\Omega
+\gamma_{2}\int_\Omega(\hat{\mathbf{u}}\!-\!\mathbf{u}_{d})\cdot\mathbf{h}_{1}\;
d\Omega +Pr\, \int_\Omega \Delta{\boldsymbol
\lambda_1}\cdot\mathbf{h}_1\; d\Omega
\nonumber\\
+\int_\Omega[(\hat{\mathbf{u}}\cdot\nabla){\boldsymbol
\lambda_1}]\cdot \mathbf{h}_{1}\;d\Omega -\int_\Omega
\nabla^{T}\hat{\mathbf{u}}\cdot{\boldsymbol \lambda_1}\cdot
\mathbf{h}_{1}\; d\Omega-\int_\Omega
\lambda_{2}\nabla\hat{\theta}\cdot\mathbf{h}_{1}\; d\Omega=0,\;\;
\forall \textbf{h}_{1}\in \textbf{V},
\end{eqnarray}
\begin{eqnarray}\label{sistopt3.1b}
\gamma_{3}\!\displaystyle\int_\Omega\!(\hat{\theta}- \theta_{d})
h_{2}\; d\Omega +Pr M\!\int_\Omega\! \text{div}\left(\frac{\partial
{\boldsymbol \lambda_1}}{\partial x_{3}} \right) h_{2}\; d\Omega
+Pr R\int_{\Omega}\lambda_{1_{3}} h_{2}\;d\Omega\nonumber\\
+\int_{\Omega}[(\hat{\textbf{u}}\cdot
\nabla)\lambda_{2}]h_{2}\;d\Omega +\int_{\Omega}\Delta \lambda_{2}\;
h_{2}\;d\Omega =0,\;\;\forall h_{2}\in H^1_0(\Omega),
\end{eqnarray}
and therefore, since $[\mathbf{h}_{1},h_{2}] \in
\widetilde{\mathbf{X}}\times H^1(\Omega)$ is arbitrary, from
(\ref{sistopt2.1a}), (\ref{sistopt3.1a}), (\ref{sistopt2.1b}) and
(\ref{sistopt3.1b}), we deduce that $[{\boldsymbol
\lambda_1},\lambda_2,{\boldsymbol \lambda_3},\lambda_4]$ satisfy, in
a variational sense, the adjoint equations (\ref{eqad}).
\end{proof}

Finally, taking $[\mathbf{h}_{1},h_{2}]=[{\boldsymbol 0},0]$ in
(\ref{lagmul}), we obtain
\begin{eqnarray}\label{lagmul1a}
\gamma_4(
\hat{\mathbf{g}},\mathbf{r})_{H^{\frac{1}{2}}(\Gamma^1_0)}+\gamma_5(
\hat{\phi}_{1},\varrho)_{H^{\frac{1}{2}}(\Gamma_{0}\setminus\left\{
x_{3}=0\right\})
}+\gamma_6(\hat{\phi}_{2},\tau)_{H^{\frac{1}{2}}(\left\{
x_{3}=0\right\})} +\left\langle \varrho,\lambda_2\right\rangle
_{\Gamma_{0}\backslash\{x_{3}=0\}}\!\nonumber\\
+\langle
{\boldsymbol\lambda_3},\mathcal{B}_1\mathbf{r}\rangle_{(\widetilde{\mathbf{H}}^{1/2}_{e}(\Gamma_{0}))',\widetilde{\mathbf{H}}^{1/2}_{e}(\Gamma_{0})}\!+\langle
\lambda_4,\tau\rangle_{(H^{1/2}_{e}(\{x_{3}=0\}))',H^{1/2}_{e}(\{x_{3}=0\})}\geq
0,
\end{eqnarray}
for all $[\mathbf{r},\varrho,\tau] \in
\mathcal{C}(\hat{\mathbf{g}})\times
\mathcal{C}(\hat{\phi}_{1})\times \mathcal{C}(\hat{\phi}_{2})$.
Taking $\mathbf{r}=\mathbf{g}-\hat{\mathbf{g}}$, $\varrho=\phi_1 -
\hat{\phi}_1$ and $\tau=\phi_2 - \hat{\phi}_2$ in (\ref{lagmul1a}),
and recalling the definition of $\mathcal{B}_1$ given in
(\ref{operB}), we obtain
\begin{eqnarray*}
\langle\gamma_4
\hat{\mathbf{g}}+{\boldsymbol\lambda_3},\mathbf{g}-\hat{\mathbf{g}}\rangle
_{(H^{\frac{1}{2}}(\Gamma^1_0))',H^{\frac{1}{2}}(\Gamma^1_0)}+\langle\gamma_5
\hat{\phi}_{1}+\lambda_{2},\phi_1 -
\hat{\phi}_1\rangle_{(H^{\frac{1}{2}}(\Gamma_{0}\setminus\left\{
x_{3}=0\right\}))',H^{\frac{1}{2}}(\Gamma_{0}\setminus\left\{
x_{3}=0\right\})}\nonumber\\
+\langle\gamma_6\hat{\phi}_{2}+\lambda_4,\phi_2 -
\hat{\phi}_2\rangle_{(H^{\frac{1}{2}}(\left\{
x_{3}=0\right\}))',H^{\frac{1}{2}}(\left\{ x_{3}=0\right\})} \geq 0.
\end{eqnarray*}
Thus, the \textit{optimality conditions} are
\begin{eqnarray}
\langle\gamma_4
\hat{\mathbf{g}}+{\boldsymbol\lambda_3},\mathbf{g}-\hat{\mathbf{g}}\rangle
_{(H^{\frac{1}{2}}(\Gamma^1_0))',H^{\frac{1}{2}}(\Gamma^1_0)}&\geq&
0, \:\; \forall \mathbf{g}\in \mathcal{\mathbf{U}}_{1}, \label{lagmul1b1}\\
\langle\gamma_5 \hat{\phi}_{1}+\lambda_{2},\phi_1 -
\hat{\phi}_1\rangle_{(H^{\frac{1}{2}}(\Gamma_{0}\setminus\left\{
x_{3}=0\right\}))',H^{\frac{1}{2}}(\Gamma_{0}\setminus\left\{
x_{3}=0\right\})} &\geq& 0,\:\; \forall \phi_1\in \mathcal{U}_{2},\label{lagmul1b2}\\
\langle\gamma_6\hat{\phi}_{2}+\lambda_4,\phi_2 -
\hat{\phi}_2\rangle_{(H^{\frac{1}{2}}(\left\{
x_{3}=0\right\}))',H^{\frac{1}{2}}(\left\{ x_{3}=0\right\})}&\geq&
0, \:\; \forall \phi_2\in \mathcal{U}_{3}.\label{lagmul1b3}
\end{eqnarray}
Therefore, the state equations described in (\ref{model001}), the
adjoint equations given in (\ref{eqad}) and the optimality
conditions obtained in (\ref{lagmul1b1})-(\ref{lagmul1b3}), form the
optimality system of the optimal control problem
(\ref{eq:funcionalmul2}).
\begin{remark}
Following the ideas of \cite{delosReyes} we could suggest a
semi-smooth Newton method  applied to constrained boundary optimal
control of the RBM system. However, due to the lack of sufficient
regularity of the Lagrange multipliers for the pointwise control
constraint in the optimality system, a direct application of the
method to the infinite dimensional problem is not possible.
Therefore, following the ideas of \cite{delosReyes}, seems
reasonable to apply the semi-smooth Newton method to a
regularization of the original control problem, and finally to
analyze the convergence of the regularized solutions to the optimal
solution.
\end{remark}
%
%
%

\section{Second Order Sufficient Condition}
\hspace{0.4cm}In this section, we will analyze sufficient conditions for
$\hat{\mathbf{z}}=[\hat{\mathbf{u}},\hat{\theta},\hat{\mathbf{g}},\hat{\phi}_{1},\hat{\phi}_{2}]\in
S_{ad}$ be a local optimal solution. We will establish a
coercitivity condition on the second derivative of the Lagrangian
$\mathcal{L}$ in order to assure that an admissible point $\hat{\bf
z}$ is a local optimal solution. Here, we recall that
\begin{eqnarray*}
&\mathcal{L}(\textbf{z},{\boldsymbol \eta})& =
\CJ(\textbf{z})-\langle \CF_{1}(\textbf{z}),{\boldsymbol\lambda_1}
\rangle_{\textbf{X}'_{0},\textbf{X}_{0}}-\langle
\CF_{2}(\textbf{z}),\lambda_{2}
\rangle_{Y',Y} - \langle\mathbf{\boldsymbol\lambda_3},\CF_{3}(\textbf{z})\rangle_{(\widetilde{\mathbf{H}}^{1/2}_{e}(\Gamma_{0}))',\widetilde{\mathbf{H}}^{1/2}_{e}(\Gamma_{0})}\nonumber\\
&&\text{ }\;\; - \langle\lambda_{4},\CF_{4}(\textbf{z})\rangle_{(H^{1/2}_{e}(\{x_{3}=0\}))',H^{1/2}_{e}(\{x_{3}=0\})}.
\end{eqnarray*}
We have that the Lagrange multiplier ${\boldsymbol \eta}$ satisfies
$\mathcal{L}_{[\bf u,\theta]}([\hat{\bf z}, {\boldsymbol
\eta}])[\mathbf{h}_{1},h_{2}]=0$ for all $[\mathbf{h}_{1},h_{2}]\in
\widetilde{\mathbf{X}}\times H^1(\Omega)$, that is,
\begin{eqnarray}\label{eqq51b}
\gamma_{1} (\text{rot } \hat{\mathbf{u}},\text{rot }
\mathbf{h}_{1})_{L^2(\Omega)}
\!+\!\gamma_{2}(\hat{\mathbf{u}}\!-\!\mathbf{u}_{d},\mathbf{h}_{1})_{L^2(\Omega)}\!+\!\gamma_{3}(\hat{\theta}-
\theta_{d}, h_{2})_{L^2(\Omega)}-Pr\, a(\mathbf{h}_1,{\boldsymbol \lambda_1})-Pr\, M\, b_{1}(h_{2},{\boldsymbol \lambda_1})\nonumber\\
-c(\hat{\mathbf{u}},\mathbf{h}_{1},{\boldsymbol
\lambda_1})-c(\mathbf{h}_{1},\hat{\mathbf{u}},{\boldsymbol
\lambda_1})+Pr R (h_{2},{\lambda_{1_{3}}})_{L^{2}(\Omega)}
-c_{1}(\hat{\mathbf{u}},h_{2},\lambda_2)
-c_{1}(\mathbf{h}_{1},\hat{\theta},\lambda_2)-a_{1}(h_{2},\lambda_2)\nonumber\\
-\left\langle Bh_{2},\lambda_2\right\rangle _{\Gamma_{1}} -\langle
{\boldsymbol\lambda_3},\mathbf{h}_{1}\!\mid_{\Gamma_0}\rangle_{(\widetilde{\mathbf{H}}^{1/2}_{e}(\Gamma_{0}))',\widetilde{\mathbf{H}}^{1/2}_{e}(\Gamma_{0})}\!-\langle
\lambda_4,h_2\!\mid_{\{x_{3}=0\}}\rangle_{(H^{1/2}_{e}(\{x_{3}=0\}))',H^{1/2}_{e}(\{x_{3}=0\})}=
0,\ \ \ \
\end{eqnarray}
for all $[\mathbf{h}_{1},h_{2}]\in \widetilde{\mathbf{X}}\times
H^1(\Omega)$. In the next lemma we will establish a key estimate
which is verified by the Lagrange multipliers $[{\boldsymbol
\lambda_1},\lambda_2]\in \mathbb{H}.$
\begin{lemma}\label{est12}Let $\hat{\mathbf{z}}=[\hat{\mathbf{u}},\hat{\theta},\hat{\mathbf{g}},\hat{\phi}_{1},\hat{\phi}_{2}]$ an admissible point for the constrained optimal control problem (\ref{eq:funcional}) and assume (\ref{rpc1}). Then, the Lagrange multipliers $[{\boldsymbol\lambda_1},\lambda_2]\in \mathbb{H}$
satisfy
\begin{eqnarray}\label{estimulti}
\Vert
{\boldsymbol\lambda_1}\Vert^2_{H^1(\Omega)}+\Vert\lambda_2\Vert^2_{H^1(\Omega)}\leq
\frac{1}{\beta_0}C_1\mathcal{M}[\hat{\bf u},\hat{\theta}],
\end{eqnarray}
where $\mathcal{M}[\hat{\bf
u},\hat{\theta}]:=\displaystyle\frac{\gamma_1^2}{Pr}\Vert \hat{\bf
u}\Vert^2_{H^1(\Omega)}+\frac{\gamma_2^2}{Pr}\Vert \hat{\bf u}-{\bf
u}_d\Vert^2_{L^2(\Omega)}+{\gamma^2_3}\Vert
\hat{\theta}-\theta_d\Vert^2_{L^2(\Omega)}$ and $C_1$ is a positive
constant depending only on $\Omega.$
\end{lemma}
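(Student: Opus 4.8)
The plan is to test the first-order Lagrangian identity (\ref{eqq51b}) --- which holds for every direction $[\mathbf{h}_1,h_2]\in\widetilde{\mathbf{X}}\times H^1(\Omega)$ --- with the admissible choice $[\mathbf{h}_1,h_2]=[{\boldsymbol\lambda_1},\lambda_2]\in\mathbb{H}=\mathbf{X}_0\times Y$; this is legitimate since $\mathbf{X}_0\subseteq\widetilde{\mathbf{X}}$ and $Y\subseteq H^1(\Omega)$. Two simplifications follow. First, because ${\boldsymbol\lambda_1}=0$ on $\Gamma_0$ and $\lambda_2=0$ on $\{x_{3}=0\}$, the two duality pairings carrying ${\boldsymbol\lambda_3}$ and $\lambda_4$ vanish. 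Second, arguing as in Lemma \ref{lemalt} and in the computation preceding Theorem \ref{tmasolunica} --- using that $\hat{\mathbf{u}}\in\widetilde{\mathbf{X}}$ has $\hat{\mathbf{u}}\cdot\mathbf{n}=0$ on $\Gamma_1$ and on $\Gamma_0\setminus\{x_{3}=0\}$, that ${\boldsymbol\lambda_1}=0$ on $\Gamma_0$, $\lambda_2=0$ on $\{x_{3}=0\}$, and that $\text{div }\hat{\mathbf{u}}=0$ --- one gets $c(\hat{\mathbf{u}},{\boldsymbol\lambda_1},{\boldsymbol\lambda_1})=0$ and $c_1(\hat{\mathbf{u}},\lambda_2,\lambda_2)=0$. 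Since $a({\boldsymbol\lambda_1},{\boldsymbol\lambda_1})=\|\nabla{\boldsymbol\lambda_1}\|_{L^2(\Omega)}^2$ and $a_1(\lambda_2,\lambda_2)=\|\nabla\lambda_2\|_{L^2(\Omega)}^2$, (\ref{eqq51b}) reduces to
\begin{eqnarray*}
Pr\|\nabla{\boldsymbol\lambda_1}\|_{L^2(\Omega)}^2+\|\nabla\lambda_2\|_{L^2(\Omega)}^2+B\|\lambda_2\|_{L^2(\Gamma_1)}^2 &=& \gamma_1(\text{rot }\hat{\mathbf{u}},\text{rot }{\boldsymbol\lambda_1})_{L^2(\Omega)}+\gamma_2(\hat{\mathbf{u}}-\mathbf{u}_d,{\boldsymbol\lambda_1})_{L^2(\Omega)}\\
&& +\,\gamma_3(\hat\theta-\theta_d,\lambda_2)_{L^2(\Omega)}-Pr\,M\,b_1(\lambda_2,{\boldsymbol\lambda_1})\\
&& +\,Pr\,R\,(\lambda_2,\lambda_{1_{3}})_{L^2(\Omega)}-c({\boldsymbol\lambda_1},\hat{\mathbf{u}},{\boldsymbol\lambda_1})-c_1({\boldsymbol\lambda_1},\hat\theta,\lambda_2).
\end{eqnarray*}

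The next step is to estimate the right-hand side. The cross and trilinear terms are controlled by H\"older's inequality, the embedding $H^1(\Omega)\hookrightarrow L^4(\Omega)$ and the Poincar\'e inequality (\ref{poincare1}) (available for ${\boldsymbol\lambda_1}$ since it vanishes on $\Gamma_0$, and for $\lambda_2$ since it vanishes on $\{x_{3}=0\}$): $|b_1(\lambda_2,{\boldsymbol\lambda_1})|\le\|\nabla\lambda_2\|_{L^2}\|\nabla{\boldsymbol\lambda_1}\|_{L^2}$, $|(\lambda_2,\lambda_{1_{3}})_{L^2}|\le C\|\nabla\lambda_2\|_{L^2}\|\nabla{\boldsymbol\lambda_1}\|_{L^2}$, $|c({\boldsymbol\lambda_1},\hat{\mathbf{u}},{\boldsymbol\lambda_1})|\le C\|\hat{\mathbf{u}}\|_{H^1(\Omega)}\|\nabla{\boldsymbol\lambda_1}\|_{L^2}^2$, and, using $c_1({\boldsymbol\lambda_1},\hat\theta,\lambda_2)=-c_1({\boldsymbol\lambda_1},\lambda_2,\hat\theta)$, $|c_1({\boldsymbol\lambda_1},\hat\theta,\lambda_2)|\le C\|\hat\theta\|_{H^1(\Omega)}\|\nabla{\boldsymbol\lambda_1}\|_{L^2}\|\nabla\lambda_2\|_{L^2}$. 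Applying Young's inequality to these four quantities so that their $\|\nabla{\boldsymbol\lambda_1}\|_{L^2}^2$-parts carry a coefficient $\le C\bigl(Pr(M+R)+\|\hat{\mathbf{u}}\|_{H^1(\Omega)}+\|\hat\theta\|_{H^1(\Omega)}^2\bigr)$ and their $\|\nabla\lambda_2\|_{L^2}^2$-parts a coefficient $\le\frac12+CPr(M+R)$, moving these to the left, and discarding the nonnegative term $B\|\lambda_2\|_{L^2(\Gamma_1)}^2$, the hypothesis (\ref{rpc1}) yields
\begin{equation*}
\beta_0\bigl(\|\nabla{\boldsymbol\lambda_1}\|_{L^2(\Omega)}^2+\|\nabla\lambda_2\|_{L^2(\Omega)}^2\bigr)\le\gamma_1(\text{rot }\hat{\mathbf{u}},\text{rot }{\boldsymbol\lambda_1})_{L^2(\Omega)}+\gamma_2(\hat{\mathbf{u}}-\mathbf{u}_d,{\boldsymbol\lambda_1})_{L^2(\Omega)}+\gamma_3(\hat\theta-\theta_d,\lambda_2)_{L^2(\Omega)}.
\end{equation*}

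To conclude, I bound the three remaining terms using $\|\text{rot }{\boldsymbol\lambda_1}\|_{L^2}\le C\|\nabla{\boldsymbol\lambda_1}\|_{L^2}$, Poincar\'e, Cauchy--Schwarz and Young's inequality, choosing the weights so that a fixed fraction of $\beta_0\bigl(\|\nabla{\boldsymbol\lambda_1}\|_{L^2}^2+\|\nabla\lambda_2\|_{L^2}^2\bigr)$ is reabsorbed on the left while the resulting data terms assemble into $\mathcal{M}[\hat{\mathbf{u}},\hat\theta]$ with the stated $Pr$-weights; a final application of Poincar\'e to pass from $\|\nabla\cdot\|_{L^2(\Omega)}$ to the full $H^1(\Omega)$-norms of ${\boldsymbol\lambda_1}$ and $\lambda_2$ then gives (\ref{estimulti}) with $C_1=C_1(\Omega)$. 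The main obstacle is precisely the bookkeeping in the middle step: one must split the coercive quantities $Pr\|\nabla{\boldsymbol\lambda_1}\|_{L^2}^2$ and $\|\nabla\lambda_2\|_{L^2}^2$ into pieces of exactly the right size so that the cross and trilinear terms are absorbed to reproduce the constant $\beta_0$ of (\ref{rpc1}) --- this is where the a priori bounds on $[\hat{\mathbf{u}},\hat\theta]$ from Theorem \ref{tmaexistenciasolucion2}, built into the assumption $\beta_0>0$, are used --- while keeping enough coercivity in reserve to dominate the three source terms.
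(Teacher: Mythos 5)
Your strategy is the same as the paper's: test the identity (\ref{eqq51b}) with $[\mathbf{h}_1,h_2]=[{\boldsymbol\lambda_1},\lambda_2]\in\mathbb{H}$ (legitimate, since $\mathbf{X}_0\subset\widetilde{\mathbf{X}}$ and $Y\subset H^1(\Omega)$), use ${\boldsymbol\lambda_1}=0$ on $\Gamma_0$ and $\lambda_2=0$ on $\{x_3=0\}$ to kill the pairings with ${\boldsymbol\lambda_3},\lambda_4$, use the antisymmetry relations to drop $c(\hat{\mathbf{u}},{\boldsymbol\lambda_1},{\boldsymbol\lambda_1})$ and $c_1(\hat{\mathbf{u}},\lambda_2,\lambda_2)$, and then absorb the remaining terms via H\"older, Sobolev embeddings, Poincar\'e and Young, invoking (\ref{rpc1}). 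The identity you reach is exactly the paper's starting point. The gap is in your last step, i.e.\ in the order of the absorptions. Once you have reduced the left-hand side to $\beta_0\bigl(\Vert\nabla{\boldsymbol\lambda_1}\Vert^2_{L^2(\Omega)}+\Vert\nabla\lambda_2\Vert^2_{L^2(\Omega)}\bigr)$, the only absorption capacity left is of size $\beta_0$. Young's inequality applied to $\gamma_1(\mbox{rot }\hat{\mathbf{u}},\mbox{rot }{\boldsymbol\lambda_1})_{L^2(\Omega)}\le C\gamma_1\Vert\hat{\mathbf{u}}\Vert_{H^1(\Omega)}\Vert\nabla{\boldsymbol\lambda_1}\Vert_{L^2(\Omega)}$ so as to eat only a fixed fraction of $\beta_0\Vert\nabla{\boldsymbol\lambda_1}\Vert^2_{L^2(\Omega)}$ necessarily produces a data term of size $C\gamma_1^2\beta_0^{-1}\Vert\hat{\mathbf{u}}\Vert^2_{H^1(\Omega)}$ (and similarly for the $\gamma_2$- and $\gamma_3$-terms), so after dividing by the surviving fraction of $\beta_0$ you obtain a bound of order $\beta_0^{-2}\bigl(\gamma_1^2\Vert\hat{\mathbf{u}}\Vert^2_{H^1(\Omega)}+\gamma_2^2\Vert\hat{\mathbf{u}}-\mathbf{u}_d\Vert^2_{L^2(\Omega)}+\gamma_3^2\Vert\hat\theta-\theta_d\Vert^2_{L^2(\Omega)}\bigr)$. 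This is strictly weaker than (\ref{estimulti}): since $\beta_0\le Pr$ (and $\beta_0\le 1/2$), one has $\beta_0^{-2}\gamma_1^2\ge\beta_0^{-1}Pr^{-1}\gamma_1^2$, and the discrepancy cannot be hidden in $C_1$, which must depend only on $\Omega$. Your assertion that the weights can be chosen so that the data terms ``assemble into $\mathcal{M}[\hat{\mathbf{u}},\hat\theta]$ with the stated $Pr$-weights'' while reabsorbing only a fraction of $\beta_0(\cdot)$ fails precisely in the relevant regime $\beta_0\ll Pr$ ($Pr$ large, $\beta_0$ merely positive).

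The repair is a reordering, and it is what the paper does: perform all Young splittings simultaneously against the full coercive terms $Pr\Vert\nabla{\boldsymbol\lambda_1}\Vert^2_{L^2(\Omega)}+\Vert\nabla\lambda_2\Vert^2_{L^2(\Omega)}$, before invoking (\ref{rpc1}). Split the $\gamma_1$- and $\gamma_2$-terms with weight $Pr$ (absorbing, say, $\tfrac{Pr}{2}\Vert\nabla{\boldsymbol\lambda_1}\Vert^2_{L^2(\Omega)}$ and producing $C\gamma_1^2Pr^{-1}\Vert\hat{\mathbf{u}}\Vert^2_{H^1(\Omega)}+C\gamma_2^2Pr^{-1}\Vert\hat{\mathbf{u}}-\mathbf{u}_d\Vert^2_{L^2(\Omega)}$), split the $\gamma_3$-term with weight $1$ (absorbing $\tfrac12\Vert\nabla\lambda_2\Vert^2_{L^2(\Omega)}$ and producing $C\gamma_3^2\Vert\hat\theta-\theta_d\Vert^2_{L^2(\Omega)}$), and carry out the cross/trilinear absorptions you already have. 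The surviving left-hand coefficients are then bounded below by (a multiple of) $\beta_0$ thanks to (\ref{rpc1}); dividing by $\beta_0$ and applying the Poincar\'e inequality (\ref{poincare1}) to ${\boldsymbol\lambda_1}$ and $\lambda_2$ yields exactly (\ref{estimulti}) with $C_1=C_1(\Omega)$.
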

\begin{proof}
From (\ref{eqq51b}), setting $[{\bf
h_1},h_2]=[{\boldsymbol\lambda_1},\lambda_2]\in \mathbb{H},$ we have
\begin{eqnarray}
&Pr\Vert
\nabla{\boldsymbol\lambda_1}&\!\!\!\!\!\!\Vert^2_{L^{2}(\Omega)}+\Vert
\nabla
\lambda_2\Vert^2_{L^{2}(\Omega)}+B\Vert\lambda_2\Vert^{2}_{L^{2}(\Gamma_1)}=\gamma_{1}
(\text{rot } \hat{\mathbf{u}},\text{rot }
{\boldsymbol\lambda_1})_{L^2(\Omega)}
\!+\!\gamma_{2}(\hat{\mathbf{u}}\!-\!\mathbf{u}_{d},{\boldsymbol\lambda_1})_{L^2(\Omega)}\nonumber\\
&&+\!\gamma_{3}(\hat{\theta}- \theta_{d},
\lambda_2)_{L^2(\Omega)}-Pr\, M\, b_{1}(\lambda_2,{\boldsymbol
\lambda_1}) -c(\hat{\mathbf{u}},{\boldsymbol\lambda_1},{\boldsymbol
\lambda_1})-c({\boldsymbol\lambda_1},\hat{\mathbf{u}},{\boldsymbol
\lambda_1})\nonumber\\
&&+Pr R (\lambda_2,{\lambda_{1_{3}}})_{L^{2}(\Omega)}
-c_{1}(\hat{\mathbf{u}},\lambda_2,\lambda_2)
-c_{1}({\boldsymbol\lambda_1},\hat{\theta},\lambda_2).\label{sf1}
\end{eqnarray}
Then, by using the H\"older, Poincar\'e and Young inequalities and
Sobolev embeddings, from (\ref{sf1}) we get
\begin{eqnarray*}
&&Pr\Vert\nabla{\boldsymbol\lambda_1}\Vert_{L^2(\Omega)}^2+\Vert
\nabla\lambda_2\Vert_{L^2(\Omega)}^2\leq \frac{C\gamma_1^2}{Pr}\Vert
\hat{\bf u}\Vert^2_{H^1(\Omega)}+\frac{C\gamma_2^2}{Pr}\Vert
\hat{\bf u}-{\bf
u}_d\Vert^2_{L^2(\Omega)}+\frac{Pr}{2}\Vert\nabla{\boldsymbol\lambda_1}\Vert_{L^2(\Omega)}^2\\
&&\ \ \ +C Pr (R+M)(\Vert
\nabla{\boldsymbol\lambda_1}\Vert^2_{L^2(\Omega)}+\Vert
\nabla\lambda_2\Vert^2_{L^2(\Omega)})+C\Vert
\nabla{\boldsymbol\lambda_1}\Vert^2_{L^2(\Omega)}\Vert  \hat{\bf
u}\Vert_{H^1(\Omega)}+{C\gamma^2_3}\Vert
\hat{\theta}-\theta_d\Vert^2_{L^2(\Omega)}\\
&& \ \ \ +C\Vert \nabla
{\boldsymbol\lambda_1}\Vert^2_{L^2(\Omega)}\Vert
\hat{\theta}\Vert^2_{H^1(\Omega)}+\frac{1}{2}\Vert
\nabla\lambda_2\Vert_{L^2(\Omega)}^2,
\end{eqnarray*}
where $C$ only depends on $\Omega.$ Thus, we can get
\begin{eqnarray*}
&&\left(Pr-C\left(Pr(M+R)+\Vert \hat{\bf u}\Vert_{H^1(\Omega)}+\Vert
\hat{\theta}\Vert^2_{H^1(\Omega)}\right)\right)\Vert \nabla
{\boldsymbol\lambda_1}\Vert^2_{L^2(\Omega)}\\
&& \ \ \ \ \ +\left(\frac{1}{2}-C
 Pr(R+M)\right)\Vert \nabla\lambda_2\Vert^2_{L^2(\Omega)}\leq C\left(\frac{\gamma_1^2}{Pr}\Vert
\hat{\bf u}\Vert^2_{H^1(\Omega)}+\frac{\gamma_2^2}{Pr}\Vert \hat{\bf
u}-{\bf u}_d\Vert^2_{L^2(\Omega)}+{\gamma^2_3}\Vert
\hat{\theta}-\theta_d\Vert^2_{L^2(\Omega)}\right).
\end{eqnarray*}
Then, since by hypothesis $\beta_0\!=\!\min\!\left\{\!\displaystyle
Pr-C\!\left(Pr(M+R)\!+\!\Vert \hat{\bf
u}\Vert_{H^1(\Omega)}\!+\!\Vert
\hat{\theta}\Vert^2_{H^1(\Omega)}\right),\frac{1}{2}-C
Pr(R+M)\right\}\!>\!0$, using the Poincar\'e inequality we conclude
(\ref{estimulti}).
\end{proof}
\begin{theorem}
Let
$\hat{\mathbf{z}}=[\hat{\mathbf{u}},\hat{\theta},\hat{\mathbf{g}},\hat{\phi}_{1},\hat{\phi}_{2}]$
an admissible point for the constrained optimal control problem
(\ref{eq:funcional}) and assume (\ref{rpc1}). If $\frac{C^2
C_1}{\beta_0 \Lambda^2}\mathcal{M}[\hat{\bf u},\hat{\theta}]<1$,
where $\Lambda:= \frac{C
\min\{\gamma_4,\gamma_5,\gamma_6\}\beta_0^2}{(1+\beta_0)^2}$, and
$\mathcal{M}[\hat{\bf u},\hat{\theta}]$, $C_1$ are given in Lemma
\ref{est12}, then there exists $K_0>0$ such that
\begin{eqnarray}\label{coer}
\mathcal{L}_{{\bf z} {\bf z}}[\hat{\bf z}, {\boldsymbol\eta}][{\bf
t}, {\bf t}] \geq K_0 \|{\bf t}\|^2_{\mathbb{G}},
\end{eqnarray}
for all ${\bf t}\in
\ker(\textbf{\textit{F}}_{\mathbf{z}}(\hat{\mathbf{z}})).$
Consequently, the point $\hat{\bf z}$ is a local optimal solution.
\end{theorem}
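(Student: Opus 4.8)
The plan is to compute the Hessian $\mathcal{L}_{\mathbf{z}\mathbf{z}}[\hat{\mathbf{z}},{\boldsymbol\eta}]$ explicitly, to control its indefinite part by means of the multiplier estimate of Lemma~\ref{est12}, and then to absorb that part into the positive control terms by invoking, along directions in $\ker(\textbf{\textit{F}}_{\mathbf{z}}(\hat{\mathbf{z}}))$, the same a priori estimate that underlies the regular point condition in Lemma~\ref{lemma8}. Since $\CF_{3},\CF_{4}$ are affine in $\mathbf{z}$ while $\CJ$, $\langle\CF_{1}(\cdot),{\boldsymbol\lambda_1}\rangle$ and $\langle\CF_{2}(\cdot),\lambda_2\rangle$ are quadratic in $\mathbf{z}$, the form $\mathcal{L}_{\mathbf{z}\mathbf{z}}[\hat{\mathbf{z}},{\boldsymbol\eta}]$ is independent of the base point, and differentiating once more the expressions in Lemmas~\ref{lemmaderfref}--\ref{lemmaderfrecj} I would obtain, for $\mathbf{t}=[\mathbf{h}_{1},h_{2},\mathbf{r},\varrho,\tau]$,
\begin{align*}
\mathcal{L}_{\mathbf{z}\mathbf{z}}[\hat{\mathbf{z}},{\boldsymbol\eta}][\mathbf{t},\mathbf{t}]&=\gamma_{1}\|\text{rot}\,\mathbf{h}_{1}\|_{L^{2}(\Omega)}^{2}+\gamma_{2}\|\mathbf{h}_{1}\|_{L^{2}(\Omega)}^{2}+\gamma_{3}\|h_{2}\|_{L^{2}(\Omega)}^{2}+\gamma_{4}\|\mathbf{r}\|_{H^{1/2}(\Gamma_{0}^{1})}^{2}\\
&\quad+\gamma_{5}\|\varrho\|_{H^{1/2}(\Gamma_{0}\setminus\{x_{3}=0\})}^{2}+\gamma_{6}\|\tau\|_{H^{1/2}(\{x_{3}=0\})}^{2}-2c(\mathbf{h}_{1},\mathbf{h}_{1},{\boldsymbol\lambda_1})-2c_{1}(\mathbf{h}_{1},h_{2},\lambda_{2}).
\end{align*}

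The central step, and the one I expect to be the main obstacle, is an a priori bound for the linearized state along the kernel. For $\mathbf{t}\in\ker(\textbf{\textit{F}}_{\mathbf{z}}(\hat{\mathbf{z}}))$ the identities $\CF_{1\mathbf{z}}(\hat{\mathbf{z}})\mathbf{t}=0$, $\CF_{2\mathbf{z}}(\hat{\mathbf{z}})\mathbf{t}=0$ together with $\mathbf{h}_{1}|_{\Gamma_{0}}=\mathcal{B}_{1}\mathbf{r}$ and $h_{2}|_{\{x_{3}=0\}}=\tau$ constitute a linear RBM-type system for $(\mathbf{h}_{1},h_{2})$ with data $(\mathbf{r},\varrho,\tau)$; I would solve it exactly as in the proof of Lemma~\ref{lemma8} --- lift $\mathbf{r}$ by a Hopf-type divergence-free field and $\tau$ by a Casas-type field, reduce to a homogeneous problem for $[\widetilde{\mathbf{h}}_{1},\widetilde{h}_{2}]\in\mathbb{H}$, and use the $\beta_{0}$-coercivity of the bilinear form $A$ of (\ref{suf1c}) --- to arrive at
\[
\|\mathbf{h}_{1}\|_{H^{1}(\Omega)}+\|h_{2}\|_{H^{1}(\Omega)}\le\frac{C}{\beta_{0}}\Bigl(\|\mathbf{r}\|_{H^{1/2}(\Gamma_{0}^{1})}+\|\varrho\|_{H^{1/2}(\Gamma_{0}\setminus\{x_{3}=0\})}+\|\tau\|_{H^{1/2}(\{x_{3}=0\})}\Bigr),
\]
and hence $\|\mathbf{t}\|_{\mathbb{G}}^{2}\le C\beta_{0}^{-2}(1+\beta_{0})^{2}(\|\mathbf{r}\|_{H^{1/2}}^{2}+\|\varrho\|_{H^{1/2}}^{2}+\|\tau\|_{H^{1/2}}^{2})$. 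Independently, H\"older's inequality and $H^{1}(\Omega)\hookrightarrow L^{4}(\Omega)$ give $|c(\mathbf{h}_{1},\mathbf{h}_{1},{\boldsymbol\lambda_1})|+|c_{1}(\mathbf{h}_{1},h_{2},\lambda_{2})|\le C(\|{\boldsymbol\lambda_1}\|_{H^{1}(\Omega)}+\|\lambda_{2}\|_{H^{1}(\Omega)})(\|\mathbf{h}_{1}\|_{H^{1}(\Omega)}^{2}+\|h_{2}\|_{H^{1}(\Omega)}^{2})$, and Lemma~\ref{est12} bounds the multiplier norms by $C(\beta_{0}^{-1}C_{1}\mathcal{M}[\hat{\mathbf{u}},\hat{\theta}])^{1/2}$. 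Getting the displayed bound with an explicit factor $\beta_0^{-1}$ through the crossed boundary conditions is the delicate point; the rest is Young-inequality bookkeeping.

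Combining these estimates --- and noting that the hypothesis forces $\gamma_{4},\gamma_{5},\gamma_{6}>0$, so the first three (nonnegative) terms of the Hessian may be dropped --- I would obtain
\[
\mathcal{L}_{\mathbf{z}\mathbf{z}}[\hat{\mathbf{z}},{\boldsymbol\eta}][\mathbf{t},\mathbf{t}]\ge\Bigl(\min\{\gamma_{4},\gamma_{5},\gamma_{6}\}-\frac{C}{\beta_{0}^{2}}\Bigl(\frac{C_{1}}{\beta_{0}}\mathcal{M}[\hat{\mathbf{u}},\hat{\theta}]\Bigr)^{1/2}\Bigr)\bigl(\|\mathbf{r}\|_{H^{1/2}}^{2}+\|\varrho\|_{H^{1/2}}^{2}+\|\tau\|_{H^{1/2}}^{2}\bigr),
\]
and a further use of $\|\mathbf{t}\|_{\mathbb{G}}^{2}\le C\beta_{0}^{-2}(1+\beta_{0})^{2}(\|\mathbf{r}\|_{H^{1/2}}^{2}+\|\varrho\|_{H^{1/2}}^{2}+\|\tau\|_{H^{1/2}}^{2})$ turns this into $\mathcal{L}_{\mathbf{z}\mathbf{z}}[\hat{\mathbf{z}},{\boldsymbol\eta}][\mathbf{t},\mathbf{t}]\ge K_{0}\|\mathbf{t}\|_{\mathbb{G}}^{2}$ with $K_{0}>0$ exactly when $\min\{\gamma_{4},\gamma_{5},\gamma_{6}\}>\frac{C}{\beta_{0}^{2}}(\frac{C_{1}}{\beta_{0}}\mathcal{M}[\hat{\mathbf{u}},\hat{\theta}])^{1/2}$; squaring this and rewriting it with $\Lambda=\frac{C\min\{\gamma_{4},\gamma_{5},\gamma_{6}\}\beta_{0}^{2}}{(1+\beta_{0})^{2}}$ gives exactly the stated hypothesis $\frac{C^{2}C_{1}}{\beta_{0}\Lambda^{2}}\mathcal{M}[\hat{\mathbf{u}},\hat{\theta}]<1$. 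This proves (\ref{coer}).

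Finally, to deduce that $\hat{\mathbf{z}}$ is a local optimal solution I would invoke the standard second-order sufficient optimality condition (cf. \cite{zowe}): $\hat{\mathbf{z}}$ is an admissible point satisfying the first-order condition of Theorem~\ref{ml1} and the regular point condition (Lemma~\ref{lemma8}), and since $\textbf{\textit{F}}$ and $\CJ$ are quadratic, the Taylor expansions of $\textbf{\textit{F}}$ and of $\mathcal{L}(\cdot,{\boldsymbol\eta})$ at $\hat{\mathbf{z}}$ terminate. Concretely, for an admissible $\mathbf{z}$ near $\hat{\mathbf{z}}$ one writes $\mathbf{t}=\mathbf{z}-\hat{\mathbf{z}}=\mathbf{t}_{0}+\mathbf{t}_{1}$ with $\mathbf{t}_{0}\in\ker(\textbf{\textit{F}}_{\mathbf{z}}(\hat{\mathbf{z}}))$ and $\|\mathbf{t}_{1}\|_{\mathbb{G}}\le C\|\mathbf{t}\|_{\mathbb{G}}^{2}$ --- using $\textbf{\textit{F}}_{\mathbf{z}}(\hat{\mathbf{z}})\mathbf{t}=-\tfrac12\,\textbf{\textit{F}}_{\mathbf{z}\mathbf{z}}(\hat{\mathbf{z}})[\mathbf{t},\mathbf{t}]$ and the fact that $\textbf{\textit{F}}_{\mathbf{z}}(\hat{\mathbf{z}})$ is onto with a bounded right inverse --- then uses $\CJ(\mathbf{z})-\CJ(\hat{\mathbf{z}})=\mathcal{L}_{\mathbf{z}}(\hat{\mathbf{z}},{\boldsymbol\eta})\mathbf{t}+\tfrac12\mathcal{L}_{\mathbf{z}\mathbf{z}}[\hat{\mathbf{z}},{\boldsymbol\eta}][\mathbf{t},\mathbf{t}]$ with $\mathcal{L}_{\mathbf{z}}(\hat{\mathbf{z}},{\boldsymbol\eta})\mathbf{t}\ge0$ by the optimality inequality (\ref{lagmul}), and applies (\ref{coer}) to $\mathbf{t}_{0}$; after absorbing the cross terms for $\|\mathbf{t}\|_{\mathbb{G}}$ small enough one gets $\CJ(\mathbf{z})-\CJ(\hat{\mathbf{z}})\ge c\|\mathbf{z}-\hat{\mathbf{z}}\|_{\mathbb{G}}^{2}$ for some $c>0$, i.e. $\hat{\mathbf{z}}$ is a strict local minimizer.
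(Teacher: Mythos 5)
Your proposal is correct and follows essentially the same route as the paper: the same Hessian formula, the same kernel estimate obtained by re-running the lifting and $\beta_0$-coercivity machinery of Lemma \ref{lemma8} (the paper gets $\Vert[{\mathbf h}_1,h_2]\Vert \le C(1+\beta_0^{-1})(\Vert\mathbf{r}\Vert+\Vert\varrho\Vert+\Vert\tau\Vert)$ via the bilinear form $A$ of (\ref{suf1c})), and the same use of the multiplier bound of Lemma \ref{est12}, with only the bookkeeping differing: you transfer the indefinite trilinear term onto the control norms, while the paper keeps half of the $\gamma_4,\gamma_5,\gamma_6$ terms and converts the other half into the term $\Lambda\Vert[{\mathbf h}_1,h_2]\Vert^2_{\widetilde{\mathbf X}\times H^1(\Omega)}$, which is equivalent up to generic constants. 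The only other difference is that for the final implication the paper simply cites \cite{Maur} (restricting to kernel directions with $[\mathbf{r},\varrho,\tau]$ in the cones), whereas you sketch the standard quadratic-Taylor/right-inverse argument underlying that citation; both are acceptable.
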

\begin{proof}
Let ${\bf t}=[\mathbf{h}_{1},h_{2},\mathbf{r},\varrho,\tau]\in
\mathbb{G}.$ Then, the second derivative of the Lagrangian
$\mathcal{L}$, with respect to ${\bf z}$ at the point $[\hat{\bf
z},{\boldsymbol\eta}]$ in all directions $[{\bf t},{\bf t}]$, is
given by
\begin{eqnarray}
\mathcal{L}_{{\bf z}{\bf z}}[\hat{\bf z},{\boldsymbol\eta}][{\bf t},{\bf t}]&=&\gamma_1\Vert {\rm rot}\ {\bf h}_1\Vert^2_{L^2(\Omega)}+\gamma_2\Vert {\bf h}_1\Vert^2_{L^2(\Omega)}+\gamma_3\Vert {h}_2\Vert^2_{L^2(\Omega)}+\gamma_4\Vert {\bf{r}}\Vert^2_{H^{\frac{1}{2}}(\Gamma^1_0)}+\gamma_5\Vert \varrho\Vert^2_{H^{\frac{1}{2}}(\Gamma_0\setminus\{x_3=0\})}\nonumber\\
&&+\gamma_6\Vert \tau\Vert^2_{H^{\frac{1}{2}}(\{x_3=0\})}-2 c({\bf
h}_1,{\bf h}_1,\lambda_1)-2 c_1({\bf
h}_1,{h}_2,\lambda_2).\label{suf54}
\end{eqnarray}
Thus, by using the H\"older and Young inequalities, we bound
(\ref{suf54}) as follows
\begin{eqnarray}
\mathcal{L}_{{\bf z}{\bf z}}[\hat{\bf z},\hat{\eta}][{\bf t},{\bf t}]&\geq& \gamma_1\Vert {\rm rot}\ {\bf h}_1\Vert^2_{L^2(\Omega)}+\gamma_2\Vert {\bf h}_1\Vert^2_{L^2(\Omega)}+\gamma_3\Vert {h}_2\Vert^2_{L^2(\Omega)}+\gamma_4\Vert {\bf{r}}\Vert^2_{H^{\frac{1}{2}}(\Gamma^1_0)}+\gamma_5\Vert \varrho\Vert^2_{H^{\frac{1}{2}}(\Gamma_0\setminus\{x_3=0\})}\nonumber\\
&&+\gamma_6\Vert \tau\Vert^2_{H^{\frac{1}{2}}(\{x_3=0\})}-C(\Vert
\lambda_1\Vert_{H^1(\Omega)}+\Vert
\lambda_2\Vert_{H^1(\Omega)})\Vert [{\bf
h}_1,h_{2}]\Vert^2_{\widetilde{\mathbf{X}}\times
H^{1}(\Omega)}.\label{yc1}
\end{eqnarray}
If ${\bf t}\in
\ker(\textbf{\textit{F}}_{\mathbf{z}}(\hat{\mathbf{z}})),$ from
(\ref{frechet1a})-(\ref{operB}) we obtain
\begin{eqnarray}\label{suf1g}
Pr\, a(\mathbf{h}_1,\mathbf{v})+Pr\, M\, b_{1}(h_{2},\mathbf{v})+c(\hat{\mathbf{u}},\mathbf{h}_{1},\mathbf{v})+c(\mathbf{h}_{1},\hat{\mathbf{u}},\mathbf{v})-Pr R (h_{2},{v}_{3})_{L^{2}(\Omega)}=0,\;\forall \mathbf{v}\in \mathbf{X}_{0},\\
c_{1}(\hat{\mathbf{u}},h_{2},W)+c_{1}(\mathbf{h}_{1},\hat{\theta},W) +a_{1}(h_{2},W)+\left\langle Bh_{2},W\right\rangle _{\Gamma_{1}}-\left\langle \varrho,W\right\rangle _{\Gamma_{0}\backslash\{x_{3}=0\}}=0,\;\forall W\in Y,\label{suf2g}\\
\mathbf{h}_{1}|_{\Gamma_{0}}=\mathcal{B}_1 \mathbf{r},\\
h_{2}|_{\{x_{3}=0\}} = \tau.\label{suf4g}
\end{eqnarray}
Proceeding as in the proof of Lemma \ref{lemma8}, we can prove that
there exist $[\mathbf{h}^\epsilon_{1},h_2^\delta]\in
\widetilde{\mathbf{X}} \times H^{1}(\Omega)$ such that
$\left.\mathbf{h}^\epsilon_{1}\right|_{\Gamma_0}=\mathcal{B}_1\mathbf{r}$,
$\left. h_2^\delta\right|_{\{x_3=0\}}=\tau$, and the estimates in
(\ref{thetadeltaest}) remain true for $\theta_\delta=h_2^\delta$,
$\phi_{1}=\varrho$ and $\phi_{2}=\tau$. Therefore, rewriting the
unknowns $\mathbf{h}_1,h_2$ in the form
$\mathbf{h}_1=\mathbf{h}_1^\epsilon+\widetilde{\mathbf{h}}_1,$
$h_2=h_2^\delta+\widetilde{h}_2$ with
$[\widetilde{\mathbf{h}}_1,\widetilde{h}_2]\in \mathbb{H}$ new
unknown functions, from (\ref{suf1g})-(\ref{suf4g}), we obtain
\begin{eqnarray}\label{suf1w}
A([\widetilde{\mathbf{h}}_1,\widetilde{h}_{2}],[\mathbf{v},W])=\bar{I}[\mathbf{v},W],
\end{eqnarray}
where $A$ is the bilinear form defined in (\ref{suf1c}) and
$\bar{I}:\mathbb{H}\rightarrow \mathbb{R}$ is defined by
$\bar{I}[\mathbf{v},W]:= \langle \bar{{\bf a}}, \mathbf{v}
\rangle+\langle \bar{b},W \rangle$ with
$$\langle \bar{{\bf a}}, \mathbf{v} \rangle=-Pr\,
a(\mathbf{h}^{\epsilon}_1,\mathbf{v})-Pr  M\,
b_{1}(h_{2}^\delta,\mathbf{v})-c(\hat{\mathbf{u}},\mathbf{h}^\epsilon_{1},\mathbf{v})-c(\mathbf{h}^\epsilon_{1},\hat{\mathbf{u}},\mathbf{v})+Pr
R (h_{2}^\delta,{v}_{3})_{L^{2}(\Omega)},$$
$$\langle
\bar{b},W
\rangle=-c_{1}(\hat{\mathbf{u}},h_{2}^\delta,W)-c_{1}(\mathbf{h}^\epsilon_{1},\hat{\theta},W)-a_{1}(h^\delta_{2},W)-\left\langle
B h^\delta_{2},W\right\rangle _{\Gamma_{1}}+\left\langle
\varrho,W\right\rangle _{\Gamma_{0}\backslash\{x_{3}=0\}}.$$
Moreover, arguing as in the proof of Lemma \ref{lemma8}, from
(\ref{suf1w}) we can get there exists $C>0$, depending only on $Pr$,
$R$, $M$, $B$, $\Vert [\hat{\bf
u},\hat{\theta}]\Vert_{\widetilde{\mathbf{X}}\times H^{1}(\Omega)}$
and $\Omega$, such that
$$\Vert
[\widetilde{\mathbf{h}}_1,\widetilde{h}_{2}]\Vert_{\mathbb{H}}\leq
\frac{C}{\beta_0}\left(\Vert
[\mathbf{h}^\epsilon_{1},h_2^\delta]\Vert_{\widetilde{\mathbf{X}}
\times H^{1}(\Omega)}+\Vert
\varrho\Vert_{H^{\frac{1}{2}}(\Gamma_0\setminus\{x_3=0\})}\right),$$
with $\beta_0$ defined in (\ref{rpc1}), and consequently,
\begin{eqnarray}
\Vert [{\mathbf{h}}_1,{h}_{2}]\Vert_{\widetilde{\mathbf{X}} \times H^{1}(\Omega)} &\leq & \Vert [\widetilde{\mathbf{h}}_1,\widetilde{h}_{2}]\Vert_{\mathbb{H}}+\Vert [\mathbf{h}^\epsilon_{1},h_2^\delta]\Vert_{\widetilde{\mathbf{X}} \times H^{1}(\Omega)}\nonumber\\
&\leq &C\left(\frac{1}{\beta_0}+1\right)\left(\Vert
\mathbf{r}\Vert_{H^{\frac{1}{2}}(\Gamma^1_0)}+\Vert
\varrho\Vert_{H^{\frac{1}{2}}(\Gamma_0\setminus\{x_3=0\})}+\Vert
\tau\Vert_{H^{\frac{1}{2}}(\{x_3=0\})}\right).\label{sufic9}
\end{eqnarray}
Thus, from (\ref{yc1}) and (\ref{sufic9}) we get
\begin{eqnarray}
\mathcal{L}_{{\bf z}{\bf z}}[\hat{\bf z},{\boldsymbol \eta}][{\bf t},{\bf t}]&\geq& \gamma_1\Vert {\rm rot}\ {\bf h}_1\Vert^2_{L^2(\Omega)}+\gamma_2\Vert {\bf h}_1\Vert^2_{L^2(\Omega)}+\gamma_3\Vert {h}_2\Vert^2_{L^2(\Omega)}+\frac{\gamma_4}{2}\Vert {\bf{r}}\Vert^2_{H^{\frac{1}{2}}(\Gamma^1_0)}+\frac{\gamma_5}{2}\Vert \varrho\Vert^2_{H^{\frac{1}{2}}(\Gamma_0\setminus\{x_3=0\})}\nonumber\\
&&+\frac{\gamma_6}{2}\Vert \tau\Vert^2_{H^{\frac{1}{2}}(\{x_3=0\})}+C\frac{\min\{\gamma_{4},\gamma_5,\gamma_6\}\beta_0^2}{(1+\beta_0)^2}\Vert [{\mathbf{h}}_1,{h}_{2}]\Vert^{2}_{\widetilde{\mathbf{X}} \times H^{1}(\Omega)}\nonumber\\
&&-C(\Vert \lambda_1\Vert^2_{H^1(\Omega)}+\Vert \lambda_2\Vert^2_{H^1(\Omega)})\Vert [{\mathbf{h}}_1,{h}_{2}]\Vert^{2}_{\widetilde{\mathbf{X}} \times H^{1}(\Omega)}\nonumber\\
&\geq&\frac{\gamma_4}{2}\Vert {\bf{r}}\Vert^2_{H^{\frac{1}{2}}(\Gamma^1_0)}+\frac{\gamma_5}{2}\Vert \varrho\Vert^2_{H^{\frac{1}{2}}(\Gamma_0\setminus\{x_3=0\})}+ \frac{\gamma_6}{2}\Vert \tau\Vert^2_{H^{\frac{1}{2}}(\{x_3=0\})}\nonumber\\
&&+\left(\Lambda-C(\Vert \lambda_1\Vert^2_{H^1(\Omega)}+\Vert
\lambda_2\Vert^2_{H^1(\Omega)})\right)\Vert
[{\mathbf{h}}_1,{h}_{2}]\Vert^{2}_{\widetilde{\mathbf{X}} \times
H^{1}(\Omega)}.\label{sufic10}
\end{eqnarray}
Therefore, by using estimate (\ref{estimulti}) in Lemma \ref{est12},
from (\ref{sufic10}) we have
\begin{eqnarray*}
\mathcal{L}_{{\bf z}{\bf z}}[\hat{\bf z},{\boldsymbol\eta}][{\bf t},{\bf t}]&\geq& \frac{\gamma_4}{2}\Vert {\bf{r}}\Vert^2_{H^{\frac{1}{2}}(\Gamma^1_0)}+\frac{\gamma_5}{2}\Vert \varrho\Vert^2_{H^{\frac{1}{2}}(\Gamma_0\setminus\{x_3=0\})}+ \frac{\gamma_6}{2}\Vert \tau\Vert^2_{H^{\frac{1}{2}}(\{x_3=0\})}\nonumber\\
&&+\left(\Lambda-C\left(\frac{C_1}{\beta_0}\mathcal{M}[\hat{\bf
u},\hat{\theta}]\right)^{1/2}\right)\Vert
[{\mathbf{h}}_1,{h}_{2}]\Vert^{2}_{\widetilde{\mathbf{X}} \times
H^{1}(\Omega)}.
\end{eqnarray*}
Then, since by hypothesis
$C\left(\frac{C_1}{\beta_0}\mathcal{M}[\hat{\bf
u},\hat{\theta}]\right)^{1/2}<\Lambda$, we deduce that
$\Upsilon:=\Lambda - C\left(\frac{C_1}{\beta_0}\mathcal{M}[\hat{\bf
u},\hat{\theta}]\right)^{1/2}>0$, and consequently,
\begin{eqnarray*}
\mathcal{L}_{{\bf z} {\bf z}}[\hat{\bf z}, {\boldsymbol\eta}][{\bf
t}, {\bf t}] \geq  \min
\left\{\Upsilon,\frac{\gamma_4}{2},\frac{\gamma_5}{2},\frac{\gamma_6}{2}
\right\}\|{\bf t}\|^2_{\mathbb{G}}.
\end{eqnarray*}
Thus, we conclude the coercitivity condition (\ref{coer}). Taking in
particular ${\bf
t}=[\mathbf{h}_{1},h_{2},\mathbf{r},\varrho,\tau]\in
\ker(\textbf{\textit{F}}_{\mathbf{z}}(\hat{\mathbf{z}}))$ with
$[\mathbf{r},\varrho,\tau] \in \mathcal{C}(\hat{\mathbf{g}})\times
\mathcal{C}(\hat{\phi}_{1})\times \mathcal{C}(\hat{\phi}_{2})$, we
obtain that the point $\hat{\bf z}$ is a local optimal solution (cf.
\cite{Maur}).
\end{proof}

\section{Uniqueness of Optimal Solution}
\hspace{0.4cm}In this section we will establish a result related to the uniqueness
of the optimal solution of problem (\ref{eq:funcional}). For that,
suppose that there exist $\hat{\textbf{z}}_i
=[\hat{\textbf{u}}_i,\hat{\theta}_i,\hat{\mathbf{g}}_i,\hat{\phi}_{1}^i,\hat{\phi}_{2}^{i}]\in
\CS_{ad}$ $(i=1,2)$ optimal solutions of problem
(\ref{eq:funcional}), and let ${\boldsymbol \eta}_i=[{\boldsymbol
\lambda}_1^i,\lambda_2^i, {\boldsymbol \lambda}_3^i,\lambda_4^i]$
two Lagrange multipliers corresponding to the solutions
$\hat{\textbf{z}}_i$ satisfying the relations (\ref{sistopt2.1}),
(\ref{sistopt3.1}) and (\ref{lagmul1b1})-(\ref{lagmul1b3}). Let us
denote $\hat{\textbf{u}}=\hat{\textbf{u}}_1-\hat{\textbf{u}}_2$,
$\hat{\theta}=\hat{\theta}_1-\hat{\theta}_2$,
$\hat{\mathbf{g}}=\hat{\mathbf{g}}_1-\hat{\mathbf{g}}_2$,
$\hat{\phi}_{1}=\hat{\phi}_{1}^1-\hat{\phi}_{1}^2$,
$\hat{\phi}_{2}=\hat{\phi}_{2}^1-\hat{\phi}_{2}^2$, ${\boldsymbol
\lambda}_1={\boldsymbol \lambda}_1^1-{\boldsymbol \lambda}_1^2$,
$\lambda_2=\lambda_2^1-\lambda_2^2$, ${\boldsymbol
\lambda}_3={\boldsymbol \lambda}_3^1-{\boldsymbol \lambda}_3^2$ and
$\lambda_4=\lambda_4^1-\lambda_4^2$. Then, taking into account that
$\hat{\textbf{z}}_1$ and $\hat{\textbf{z}}_2$ satisfy the equations
(\ref{eq:020})-(\ref{eq:022}), we deduce that $\hat{\textbf{z}}
=[\hat{\textbf{u}},\hat{\theta},\hat{\mathbf{g}},\hat{\phi}_{1},\hat{\phi}_{2}]$
satisfies
\begin{equation}
Pr\, a(\hat{\mathbf{u}},\mathbf{v})+Pr\, M\,
b_{1}(\hat{\theta},\mathbf{v})+c(\hat{\mathbf{u}}_1,\hat{\mathbf{u}},\mathbf{v})+c(\hat{\mathbf{u}},\hat{\mathbf{u}}_2,\mathbf{v})=\int_\Omega
Pr R \hat{\theta} v_3 ,\;\:\forall\mathbf{v}\in
\mathbf{X}_{0},\label{uniq1a}
\end{equation}
\begin{equation}
c_{1}(\hat{\mathbf{u}}_1,\hat{\theta},W)+c_{1}(\hat{\mathbf{u}},\hat{\theta}_2,W)+a_{1}(\hat{\theta},W)+\left\langle
B\hat{\theta},W\right\rangle _{\Gamma_{1}}=\left\langle
\hat{\phi}_{1},W\right\rangle
_{\Gamma_{0}\backslash\{x_{3}=0\}},\;\:\forall W\in Y,
\label{uniq1b}
\end{equation}
\begin{equation}
\hat{\mathbf{u}}=\hat{\mathbf{g}}\: \text{ on } \:\Gamma_{0}^{1}, \
\ \hat{\mathbf{u}}={\boldsymbol 0}\: \text{ on } \:\Gamma_{0}^{2} \
\ \text{ and }\quad\hat{\theta}=\hat{\phi}_{2}\: \text{ on }
\:\{x_{3}=0\}. \label{uniq1c}
\end{equation}
Proceeding as in the beginning of Subsection \ref{solweak}, we can
prove that there exist $[\mathbf{u}_\epsilon,\theta_\delta]\in
\widetilde{\mathbf{X}} \times H^{1}(\Omega)$ such that
$\left.\mathbf{u}_\epsilon\right|_{\Gamma_0^1}=\hat{\mathbf{g}}$,
$\left.\mathbf{u}_\epsilon\right|_{\partial\Omega\setminus\Gamma_0^1}={\boldsymbol
0}$, $\left. \theta_\delta\right|_{\{x_3=0\}}=\hat{\phi}_2$, the
estimates in (\ref{thetadeltaest}) remain true for
$\phi_{1}=\hat{\phi}_1$ and $\phi_{2}=\hat{\phi}_2$, and $\Vert
\mathbf{u}_\epsilon\Vert_{H^{1}(\Omega)}\leq C\Vert
\hat{\mathbf{g}}\Vert_{H^{\frac{1}{2}}(\Gamma_0^1)}$. Therefore,
rewriting $[\hat{\mathbf{u}},\hat{\theta}]\in \widetilde{\mathbf{X}}
\times H^{1}(\Omega)$ in the form
$\hat{\mathbf{u}}=\mathbf{u}+\mathbf{u}_\epsilon$ and
$\hat{\theta}=\theta+\theta_\delta$, from
(\ref{uniq1a})-(\ref{uniq1c}) we obtain that $[\mathbf{u},\theta]\in
\mathbf{X}_0\times Y$ satisfies
\begin{eqnarray}
&& Pr\, a(\textbf{u},\mathbf{v})+Pr\, M\, b_{1}(\theta,\mathbf{v})+c(\hat{\mathbf{u}}_1,\mathbf{u},\mathbf{v})+c(\mathbf{u},\hat{\mathbf{u}}_2,\mathbf{v})=\int_\Omega Pr R \theta v_3 +\int_\Omega Pr R \theta_\delta v_3 \nonumber\\
&&\hspace{0.5cm}-Pr\, a(\textbf{u}_{\epsilon},\mathbf{v})-Pr\, M\, b_{1}(\theta_{\delta},\mathbf{v})-c(\mathbf{u}_{\epsilon},\hat{\mathbf{u}}_2,\mathbf{v})-c(\hat{\mathbf{u}}_1,\mathbf{u}_{\epsilon},\mathbf{v}) ,\;\:\forall\mathbf{v}\in \mathbf{X}_{0},\label{uniq2a}\\
&&c_{1}(\hat{\mathbf{u}}_1,\theta,W)+c_{1}(\mathbf{u},\hat{\theta}_2,W)+a_{1}(\theta,W)+\left\langle B\theta,W\right\rangle _{\Gamma_{1}}=\left\langle \hat{\phi}_{1},W\right\rangle _{\Gamma_{0}\backslash\{x_{3}=0\}}\nonumber\\
&&\hspace{0.5cm}-c_{1}(\mathbf{u}_{\varepsilon},\hat{\theta}_2,W)-
c_{1}(\hat{\mathbf{u}}_1,\theta_\delta,W)-a_{1}({\theta_\delta},W)-\left\langle
B{\theta_\delta},W\right\rangle _{\Gamma_{1}}\:\forall W\in Y.
\label{uniq2b}
\end{eqnarray}
Setting $\mathbf{v}=\mathbf{u}$ in (\ref{uniq2a}), $W=\theta$ in
(\ref{uniq2b}), and using the H\"older inequality, Sobolev
embeddings and the Poincar\'e inequality, we get
\begin{eqnarray}\label{uniq3a}
&Pr \Vert \nabla \mathbf{u}\Vert_{L^{2}(\Omega)}\leq&\!\!\!\!
C\left(\Vert \nabla \mathbf{u}\Vert_{L^{2}(\Omega)}\Vert
\hat{\mathbf{u}}_2\Vert_{H^{1}(\Omega)}+\left(Pr+\Vert
\hat{\mathbf{u}}_1\Vert_{H^{1}(\Omega)}+\Vert
\hat{\mathbf{u}}_2\Vert_{H^{1}(\Omega)}\right)\Vert
\mathbf{u}_\epsilon\Vert_{H^{1}(\Omega)}\right)\nonumber\\
&&\!\!\!\! + C (Pr R +Pr M)\left(\Vert \nabla
\theta\Vert_{L^{2}(\Omega)}+\Vert \theta_\delta
\Vert_{H^{1}(\Omega)} \right),
\end{eqnarray}

\vspace{-0.8 cm}

\begin{eqnarray}\label{uniq3b}
&\Vert \nabla \theta\Vert_{L^{2}(\Omega)}\leq&\!\!\!\! C \left(
\Vert \hat{\phi}_{1}
\Vert_{H^{\frac{1}{2}}(\Gamma_{0}\backslash\{x_{3}=0\})}+\left(\Vert
\nabla \mathbf{u}\Vert_{L^{2}(\Omega)}+\Vert
\mathbf{u}_\epsilon\Vert_{H^{1}(\Omega)}\right)\Vert
\hat{\theta}_2\Vert_{H^{1}(\Omega)}\right)\nonumber\\
&&\!\!\!\! + C \left(1+\Vert
\hat{\mathbf{u}}_1\Vert_{H^{1}(\Omega)}+B \right)\Vert \theta_\delta
\Vert_{H^{1}(\Omega)}.
\end{eqnarray}
From inequality (\ref{eq:estutheta}) we have that $\Vert \hat{\mathbf{u}}_i\Vert_{H^1(\Omega)}+ \Vert
\hat{\theta}_i\Vert_{H^1(\Omega)} \leq C S_i,$ $i=1,2,$ where
\begin{eqnarray}\label{hel1}
S_i=C\left(\Vert \mathbf{u}^{0}
\Vert_{H^{\frac{1}{2}}(\Gamma_{0}^{2})}+\Vert \hat{\mathbf{g}}_i
\Vert_{H^{\frac{1}{2}}(\Gamma_{0}^{1})}+\Vert \hat{\phi}^i_{1}
\Vert_{H^{\frac{1}{2}}(\Gamma_{0}\backslash\{x_{3}=0\})}+\Vert
\hat{\phi}^i_{2}\Vert_{H^{\frac{1}{2}}(\{x_{3}=0\})}\right).
\end{eqnarray}
Thus, from (\ref{uniq3a}), (\ref{uniq3b})  and (\ref{hel1}) we obtain
\begin{eqnarray}
Pr \Vert \nabla \mathbf{u}\Vert_{L^{2}(\Omega)}\!\!\!&\leq&\!\!\!
C((S_2+Pr(R+M)S_2)\Vert \nabla
\mathbf{u}\Vert_{L^{2}(\Omega)}+(Pr+S_1+S_2+Pr(R+M)S_2)\Vert
\mathbf{u}_\epsilon\Vert_{H^{1}(\Omega)})\nonumber\\
&&\!\!\!+C Pr(R+M)(1+S_1+B)\Vert \theta_\delta
\Vert_{H^{1}(\Omega)}+C Pr(R+M)\Vert \hat{\phi}_{1}
\Vert_{H^{\frac{1}{2}}(\Gamma_{0}\backslash\{x_{3}=0\})}\nonumber\\
\!\!\!&\leq&\!\!\! C((S_2+Pr(R+M)S_2)\Vert \nabla
\mathbf{u}\Vert_{L^{2}(\Omega)}+(Pr+S_1+S_2+Pr(R+M)S_2)\Vert
\hat{\mathbf{g}}\Vert_{H^{\frac{1}{2}}(\Gamma_0^1)})\nonumber\\
&&\!\!\!+C Pr(R+M)(1+S_1+B)(\Vert \hat{\phi}_{2}
\Vert_{H^{\frac{1}{2}}(\{x_{3}=0\})}+\Vert \hat{\phi}_{1}
\Vert_{H^{\frac{1}{2}}(\Gamma_{0}\backslash\{x_{3}=0\})}).\label{hel2}
\end{eqnarray}
Taking $Pr$ large enough and $M,R$ small enough, from (\ref{hel2})
we get
\begin{eqnarray}\label{hel3a}
\Vert \nabla \mathbf{u}\Vert_{L^{2}(\Omega)}\leq \mathcal{H}_0\left(\Vert
\hat{\mathbf{g}}\Vert_{H^{\frac{1}{2}}(\Gamma_0^1)}+\Vert
\hat{\phi}_{2} \Vert_{H^{\frac{1}{2}}(\{x_{3}=0\})}+\Vert
\hat{\phi}_{1}
\Vert_{H^{\frac{1}{2}}(\Gamma_{0}\backslash\{x_{3}=0\})}\right),
\end{eqnarray}
where $\mathcal{H}_0=C(Pr+S_1+S_2+B+1)(1+Pr(R+M))/Pr$,  and therefore
\begin{eqnarray}\label{hel3}
&\Vert  \hat{\mathbf{u}}\Vert_{H^{1}(\Omega)}&\!\!\!\!\leq \Vert
\mathbf{u}\Vert_{H^{1}(\Omega)}+\Vert
\mathbf{u}_\epsilon\Vert_{H^{1}(\Omega)}\leq C\left( \Vert \nabla
\mathbf{u}\Vert_{L^{2}(\Omega)}+\Vert
\hat{\mathbf{g}}\Vert_{H^{\frac{1}{2}}(\Gamma_0^1)}\right) \nonumber\\
&& \!\!\!\!\leq C(\mathcal{H}_0+1)\left(\Vert
\hat{\mathbf{g}}\Vert_{H^{\frac{1}{2}}(\Gamma_0^1)}+\Vert
\hat{\phi}_{2} \Vert_{H^{\frac{1}{2}}(\{x_{3}=0\})}+\Vert
\hat{\phi}_{1}
\Vert_{H^{\frac{1}{2}}(\Gamma_{0}\backslash\{x_{3}=0\})}\right).
\end{eqnarray}
In the same spirit, from (\ref{uniq3b}) and (\ref{hel3a}) we can
obtain
\begin{eqnarray*}
\Vert \nabla \theta\Vert_{L^{2}(\Omega)}\leq \mathcal{H}_1\left(\Vert
\hat{\mathbf{g}}\Vert_{H^{\frac{1}{2}}(\Gamma_0^1)}+\Vert
\hat{\phi}_{2} \Vert_{H^{\frac{1}{2}}(\{x_{3}=0\})}+\Vert
\hat{\phi}_{1}
\Vert_{H^{\frac{1}{2}}(\Gamma_{0}\backslash\{x_{3}=0\})}\right),
\end{eqnarray*}
where $\mathcal{H}_1=C(S_1+S_2+B+1+S_2\mathcal{H}_0),$ and thus
 \begin{eqnarray}\label{hel4}
\Vert  \hat{\theta}\Vert_{H^{1}(\Omega)}\leq C(\mathcal{H}_1+1)\left(\Vert
\hat{\mathbf{g}}\Vert_{H^{\frac{1}{2}}(\Gamma_0^1)}+\Vert
\hat{\phi}_{2} \Vert_{H^{\frac{1}{2}}(\{x_{3}=0\})}+\Vert
\hat{\phi}_{1}
\Vert_{H^{\frac{1}{2}}(\Gamma_{0}\backslash\{x_{3}=0\})}\right).
\end{eqnarray}

On the other hand, subtracting equations (\ref{sistopt2.1}) written for
$\hat{\textbf{u}}_i,{\boldsymbol
\lambda}_1^i,\lambda_2^i,\hat{\theta}_i, {\boldsymbol \lambda}_3^i,
\ i=1,2,$ we have
\begin{eqnarray}\label{uniq3c}
\gamma_{1} (\text{rot } \hat{\mathbf{u}},\text{rot }
\mathbf{h}_{1})_{L^2(\Omega)}
\!+\!\gamma_{2}(\hat{\mathbf{u}},\mathbf{h}_{1})_{L^2(\Omega)}
-Pr\, a(\mathbf{h}_1,{\boldsymbol
\lambda_1})-c(\hat{\mathbf{u}}_1,\mathbf{h}_{1},{\boldsymbol
\lambda_1})
-c(\hat{\mathbf{u}},\mathbf{h}_{1},{\boldsymbol
\lambda_1^2})-c(\textbf{h}_{1},\hat{\mathbf{u}}_1,{\boldsymbol
\lambda_1})\nonumber\\
-c(\textbf{h}_{1},\hat{\mathbf{u}},{\boldsymbol
\lambda}_1^2)-c_1(\textbf{h}_{1},\hat{\theta}_1,\lambda_2)-c_1(\textbf{h}_{1},\hat{\theta},\lambda_2^2)-\langle
{\boldsymbol
\lambda_{3}},\mathbf{h}_{1}\!\mid_{\Gamma_0}\rangle_{(\widetilde{\mathbf{H}}^{1/2}_{e}(\Gamma_{0}))',\widetilde{\mathbf{H}}^{1/2}_{e}(\Gamma_{0})}=0,\;\;
\forall \textbf{h}_{1}\in \widetilde{\textbf{X}}.
\end{eqnarray}
Taking $\textbf{h}_{1}=\hat{\mathbf{u}}$ in (\ref{uniq3c}) we obtain
\begin{eqnarray}\label{uniq3d}
-Pr\, a(\hat{\mathbf{u}},{\boldsymbol
\lambda_1})-c(\hat{\mathbf{u}}_1,\hat{\mathbf{u}},{\boldsymbol
\lambda_1}) -2 c(\hat{\mathbf{u}},\hat{\mathbf{u}},{\boldsymbol
\lambda_1^2})-c(\hat{\mathbf{u}},\hat{\mathbf{u}}_1,{\boldsymbol
\lambda_1})
-c_1(\hat{\mathbf{u}},\hat{\theta}_1,\lambda_2)\nonumber\\-c_1(\hat{\mathbf{u}},\hat{\theta},\lambda_2^2)-\langle
{\boldsymbol
\lambda_{3}},\hat{\mathbf{u}}\!\mid_{\Gamma_0}\rangle_{(\widetilde{\mathbf{H}}^{1/2}_{e}(\Gamma_{0}))',\widetilde{\mathbf{H}}^{1/2}_{e}(\Gamma_{0})}=-\gamma_{1}\Vert
\text{rot } \hat{\mathbf{u}}\Vert^2_{L^2(\Omega)}
-\gamma_{2}\Vert\hat{\mathbf{u}}\Vert^2_{L^2(\Omega)}.
\end{eqnarray}
Now, subtracting equations (\ref{sistopt3.1})  written for
$\hat{\textbf{u}}_i,{\boldsymbol
\lambda}_1^i,\lambda_2^i,\hat{\theta}_i, \lambda_4^i, \ i=1,2,$ we
get
\begin{eqnarray}\label{uniq3e}
\gamma_{3}(\hat{\theta}, h_{2})_{L^2(\Omega)}
-Pr M\;b_{1}(h_{2},{\boldsymbol\lambda_{1}})+Pr R\;(h_{2},\lambda_{1_{3}})_{L^2(\Omega)}-c_{1}(\hat{\textbf{u}}_1,h_{2},\lambda_{2})-c_{1}(\hat{\textbf{u}},h_{2},\lambda_{2}^2)\nonumber\\
-a_{1}(h_{2},\lambda_{2})-\langle B
h_{2},\lambda_{2}\rangle_{\Gamma_{1}}- \langle
\lambda_{4},h_2\!\mid_{\{x_{3}=0\}}\rangle_{(H^{1/2}_{e}(\{x_{3}=0\}))',H^{1/2}_{e}(\{x_{3}=0\})}=0,\;\;\forall
h_{2}\in H^1(\Omega).
\end{eqnarray}
Taking $h_{2}=\hat{\theta}$ in (\ref{uniq3e}) we have
\begin{eqnarray}\label{uniq3f}
-a_{1}(\hat{\theta},\lambda_{2})-\langle B
\hat{\theta},\lambda_{2}\rangle_{\Gamma_{1}}- \langle
\lambda_{4},\hat{\theta}\!\mid_{\{x_{3}=0\}}\rangle_{(H^{1/2}_{e}(\{x_{3}=0\}))',H^{1/2}_{e}(\{x_{3}=0\})}
-c_{1}(\hat{\textbf{u}}_1,\hat{\theta},\lambda_{2})\nonumber\\
-c_{1}(\hat{\textbf{u}},\hat{\theta},\lambda_{2}^2) -Pr
M\;b_{1}(\hat{\theta},{\boldsymbol\lambda_{1}})+Pr
R\;(\hat{\theta},\lambda_{1_{3}})_{L^2(\Omega)}=-\gamma_{3}\Vert
\hat{\theta} \Vert^2_{L^2(\Omega)}.
\end{eqnarray}
Moreover, setting $\mathbf{v}={\boldsymbol\lambda}_1$ in
(\ref{uniq1a}) and $W=\lambda_{2}$ in (\ref{uniq1b}), we obtain
\begin{equation}
Pr\, a(\hat{\mathbf{u}},{\boldsymbol\lambda_{1}})+Pr M\,
b_{1}(\hat{\theta},{\boldsymbol\lambda_{1}})+c(\hat{\mathbf{u}}_1,\hat{\mathbf{u}},{\boldsymbol\lambda_{1}})+c(\hat{\mathbf{u}},\hat{\mathbf{u}}_2,{\boldsymbol\lambda_{1}})=\int_\Omega
Pr R \hat{\theta} \lambda_{{1}_3},\label{uniq1aq}
\end{equation}
\begin{equation}
c_{1}(\hat{\mathbf{u}}_1,\hat{\theta},\lambda_2)+c_{1}(\hat{\mathbf{u}},\hat{\theta}_2,\lambda_2)+a_{1}(\hat{\theta},\lambda_2)+\left\langle
B\hat{\theta},\lambda_2\right\rangle _{\Gamma_{1}}=\left\langle
\hat{\phi}_{1},\lambda_2\right\rangle
_{\Gamma_{0}\backslash\{x_{3}=0\}}.
\label{uniq1bq}
\end{equation}
Adding (\ref{uniq3d}), (\ref{uniq3f}), (\ref{uniq1aq}),
(\ref{uniq1bq}), and using (\ref{uniq1c}) we have
\begin{eqnarray}
2c(\hat{\mathbf{u}},\hat{\mathbf{u}},{\boldsymbol\lambda^2_{1}})+c(\hat{\mathbf{u}},\hat{\mathbf{u}},{\boldsymbol\lambda_{1}})+c_1(\hat{\mathbf{u}},\hat{\theta},\lambda_2)+2 c_1(\hat{\mathbf{u}},\hat{\theta},\lambda^2_2)\nonumber\\
+\langle {\boldsymbol
\lambda_{3}},\hat{\mathbf{g}}\!\mid_{\Gamma^1_0}\rangle_{(\widetilde{\mathbf{H}}^{1/2}_{e}(\Gamma_{0}))',\widetilde{\mathbf{H}}^{1/2}_{e}(\Gamma_{0})}
+\langle
\lambda_{4},\hat{\phi}_2\!\mid_{\{x_{3}=0\}}\rangle_{(H^{1/2}_{e}(\{x_{3}=0\}))',H^{1/2}_{e}(\{x_{3}=0\})}+\left\langle
\hat{\phi}_{1},\lambda_2\right\rangle
_{\Gamma_{0}\backslash\{x_{3}=0\}}\nonumber\\
=\gamma_{1}\Vert \text{rot } \hat{\mathbf{u}}\Vert^2_{L^2(\Omega)}
+\gamma_{2}\Vert\hat{\mathbf{u}}\Vert^2_{L^2(\Omega)}+\gamma_{3}\Vert \hat{\theta} \Vert^2_{L^2(\Omega)}.\label{hel6}
\end{eqnarray}
Considering the optimality condition (\ref{lagmul1b1})
$\langle\gamma_4
\hat{\mathbf{g}}_i+{\boldsymbol\lambda}_3^i,\mathbf{g}-\hat{\mathbf{g}}_i\rangle
_{(H^{\frac{1}{2}}(\Gamma^1_0))',H^{\frac{1}{2}}(\Gamma^1_0)}\geq
0,$ $i=1,2,$ taking $\mathbf{g}=\hat{\mathbf{g}}_1$ at $i=2$,
$\mathbf{g}=\hat{\mathbf{g}}_2$ at $i=1$, and adding these
relations, we obtain
\begin{eqnarray}\label{fn1}
\langle{\boldsymbol
\lambda}_3,\hat{\mathbf{g}}\vert_{\Gamma_{0}^{1}}\rangle_{(H^{\frac{1}{2}}(\Gamma^1_0))',H^{\frac{1}{2}}(\Gamma^1_0)}
\leq -\gamma_4\Vert
\hat{\mathbf{g}}\Vert^2_{H^{\frac{1}{2}}(\Gamma^1_0)}.
\end{eqnarray}
Analogously, from the optimality conditions (\ref{lagmul1b2})-(\ref{lagmul1b3}) we get
\begin{eqnarray}
\left\langle \hat{\phi}_{1},\lambda_2\right\rangle
_{(H^{\frac{1}{2}}(\Gamma_{0}\setminus\left\{
x_{3}=0\right\}))',H^{\frac{1}{2}}(\Gamma_{0}\setminus\left\{
x_{3}=0\right\})}\leq -\gamma_5\Vert \hat{\phi}_{1}\Vert^2_{H^{\frac{1}{2}}(\Gamma_0\setminus\{x_3=0\})},\label{fn2}\\
\langle\lambda_4,\hat{\phi}_{2}\vert_{\{x_3=0\}}\rangle
_{(H^{\frac{1}{2}}(\left\{
x_{3}=0\right\}))',H^{\frac{1}{2}}(\left\{ x_{3}=0\right\})}\leq
-\gamma_6\Vert \hat{\phi}_{2}\Vert^2_{H^{\frac{1}{2}}(\left\{
x_{3}=0\right\})}.\label{fn2}
\end{eqnarray}
By using (\ref{hel3}),(\ref{hel4}) and (\ref{estimulti}) we can bound the four terms of the left hand side of (\ref{hel6}) as follows
\begin{eqnarray}\label{hel9}
2c(\hat{\mathbf{u}},\hat{\mathbf{u}},{\boldsymbol\lambda^2_{1}})\leq
C(\mathcal{H}_0+1)^{2}\frac{1}{\beta_0^{1/2}}(\mathcal{M}[\hat{\mathbf{u}}_1,\hat{\theta}_1])^{1/2}\left(\Vert
\hat{\mathbf{g}}\Vert^2_{H^{\frac{1}{2}}(\Gamma_0^1)}\!+\!\Vert
\hat{\phi}_{2} \Vert^2_{H^{\frac{1}{2}}(\{x_{3}=0\})}\!+\!\Vert
\hat{\phi}_{1}
\Vert^2_{H^{\frac{1}{2}}(\Gamma_{0}\backslash\{x_{3}=0\})}\right)\!,
\end{eqnarray}
\begin{eqnarray}\label{hel10}
c(\hat{\mathbf{u}},\hat{\mathbf{u}},{\boldsymbol\lambda_{1}})\!\!\!&\leq &\!\!\!C(\mathcal{H}_0+1)^2\frac{1}{\beta_0^{1/2}}((\mathcal{M}[\hat{\mathbf{u}}_1,\hat{\theta}_1])^{1/2}+(\mathcal{M}[\hat{\mathbf{u}}_2,\hat{\theta}_2])^{1/2})\nonumber\\
&&\!\!\!\times \left(\Vert
\hat{\mathbf{g}}\Vert^2_{H^{\frac{1}{2}}(\Gamma_0^1)}+\Vert
\hat{\phi}_{2} \Vert^2_{H^{\frac{1}{2}}(\{x_{3}=0\})}+\Vert
\hat{\phi}_{1}
\Vert^2_{H^{\frac{1}{2}}(\Gamma_{0}\backslash\{x_{3}=0\})}\right),
\end{eqnarray}
\begin{eqnarray}\label{hel11}
c_1(\hat{\mathbf{u}},\hat{\theta},{\lambda_{2}})\!\!\!&\leq &\!\!\!C(\mathcal{H}_0+1)(\mathcal{H}_1+1)\frac{1}{\beta_0^{1/2}}((\mathcal{M}[\hat{\mathbf{u}}_1,\hat{\theta}_1])^{1/2}+(\mathcal{M}[\hat{\mathbf{u}}_2,\hat{\theta}_2])^{1/2})\nonumber\\
&&\!\!\!\times \left(\Vert
\hat{\mathbf{g}}\Vert^2_{H^{\frac{1}{2}}(\Gamma_0^1)}+\Vert
\hat{\phi}_{2} \Vert^2_{H^{\frac{1}{2}}(\{x_{3}=0\})}+\Vert
\hat{\phi}_{1}
\Vert^2_{H^{\frac{1}{2}}(\Gamma_{0}\backslash\{x_{3}=0\})}\right),
\end{eqnarray}
\begin{eqnarray}\label{hel12}
2c_1(\hat{\mathbf{u}},\hat{\theta},{\lambda^2_{2}})\!\!\!&\leq &\!\!\!C(\mathcal{H}_0+1)(\mathcal{H}_1+1)\frac{1}{\beta_0^{1/2}}(\mathcal{M}[\hat{\mathbf{u}}_2,\hat{\theta}_2])^{1/2}\nonumber\\
&&\!\!\!\times \left(\Vert
\hat{\mathbf{g}}\Vert^2_{H^{\frac{1}{2}}(\Gamma_0^1)}+\Vert
\hat{\phi}_{2} \Vert^2_{H^{\frac{1}{2}}(\{x_{3}=0\})}+\Vert
\hat{\phi}_{1}
\Vert^2_{H^{\frac{1}{2}}(\Gamma_{0}\backslash\{x_{3}=0\})}\right).
\end{eqnarray}
From (\ref{hel6}) and taking into account estimates (\ref{fn1})-(\ref{hel12}) we obtain
\begin{eqnarray}
&&\gamma_{1}\Vert \text{rot } \hat{\mathbf{u}}\Vert^2_{L^2(\Omega)}
+\gamma_{2}\Vert\hat{\mathbf{u}}\Vert^2_{L^2(\Omega)}+\gamma_{3}\Vert \hat{\theta} \Vert^2_{L^2(\Omega)}\leq (\mathcal{I} - \min\{\gamma_4,\gamma_5,\gamma_6\})\nonumber\\
&&\ \ \ \times\left(\Vert
\hat{\mathbf{g}}\Vert^2_{H^{\frac{1}{2}}(\Gamma_0^1)}+\Vert
\hat{\phi}_{2} \Vert^2_{H^{\frac{1}{2}}(\{x_{3}=0\})}+\Vert
\hat{\phi}_{1}
\Vert^2_{H^{\frac{1}{2}}(\Gamma_{0}\backslash\{x_{3}=0\})}\right),\label{hel15}
\end{eqnarray}
where
$\mathcal{I}:=C\max\{(\mathcal{H}_0+1)^{2},(\mathcal{H}_0+1)(\mathcal{H}_1+1)\}\frac{1}{\beta_0^{1/2}}((\mathcal{M}[\hat{\mathbf{u}}_1,\hat{\theta}_1])^{1/2}+(\mathcal{M}[\hat{\mathbf{u}}_2,\hat{\theta}_2])^{1/2}).$
By assuming $Pr$ large enough and $R,M$ small enough such that
$\mathcal{H}_0$ be small enough and
$\mathcal{I}<\min\{\gamma_4,\gamma_5,\gamma_6\},$ from (\ref{hel15})
we obtain
$$ \Vert\hat{\mathbf{u}}\Vert_{L^2(\Omega)}+\Vert \hat{\theta} \Vert_{L^2(\Omega)}+\Vert
\hat{\mathbf{g}}\Vert_{H^{\frac{1}{2}}(\Gamma_0^1)}+\Vert
\hat{\phi}_{2} \Vert_{H^{\frac{1}{2}}(\{x_{3}=0\})}+\Vert
\hat{\phi}_{1}
\Vert_{H^{\frac{1}{2}}(\Gamma_{0}\backslash\{x_{3}=0\})}=0,$$ which
implies that $\hat{\mathbf{z}}_1=\hat{\mathbf{z}}_2$. Thus, we have
proved the following theorem:
\begin{theorem}
If $Pr$ is large enough and $R,M$ are small enough, then the optimal
solution of problem (\ref{eq:funcional}) is unique.
\end{theorem}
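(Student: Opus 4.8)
The plan is to show that any two optimal solutions must coincide by producing a single scalar identity whose right-hand side is forced to be nonpositive when $Pr$ is large and $R,M$ are small. So I would start from two optimal solutions $\hat{\mathbf{z}}_i=[\hat{\mathbf{u}}_i,\hat{\theta}_i,\hat{\mathbf{g}}_i,\hat{\phi}_1^i,\hat{\phi}_2^i]\in\mathcal{S}_{ad}$, $i=1,2$, each carrying an associated Lagrange multiplier $\boldsymbol{\eta}_i=[\boldsymbol{\lambda}_1^i,\lambda_2^i,\boldsymbol{\lambda}_3^i,\lambda_4^i]$ (these exist because, under the stated largeness/smallness assumptions, the regular point condition of Lemma \ref{lemma8} holds and Theorem \ref{ml1} applies). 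Writing $\hat{\mathbf{u}}=\hat{\mathbf{u}}_1-\hat{\mathbf{u}}_2$, $\hat{\theta}=\hat{\theta}_1-\hat{\theta}_2$, $\hat{\mathbf{g}}=\hat{\mathbf{g}}_1-\hat{\mathbf{g}}_2$, $\hat{\phi}_k=\hat{\phi}_k^1-\hat{\phi}_k^2$, $\boldsymbol{\lambda}_1=\boldsymbol{\lambda}_1^1-\boldsymbol{\lambda}_1^2$, and so on, I would combine the state equations, the adjoint identities (\ref{sistopt2.1})--(\ref{sistopt3.1}), and the optimality inequalities (\ref{lagmul1b1})--(\ref{lagmul1b3}) written for the two solutions.

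First I would subtract the weak formulations (\ref{eq:020})--(\ref{eq:022}) for $\hat{\mathbf{z}}_1$ and $\hat{\mathbf{z}}_2$, obtaining the linearized state system (\ref{uniq1a})--(\ref{uniq1c}) for the differences. To absorb the nonhomogeneous traces $\hat{\mathbf{g}}$ on $\Gamma_0^1$ and $\hat{\phi}_2$ on $\{x_3=0\}$ I would construct lifts $\mathbf{u}_\epsilon,\theta_\delta$ exactly as at the start of Subsection \ref{solweak} (the Hopf lemma plus the construction of \cite{Casas}), write $\hat{\mathbf{u}}=\mathbf{u}+\mathbf{u}_\epsilon$, $\hat{\theta}=\theta+\theta_\delta$, test the resulting equations (\ref{uniq2a}) with $\mathbf{v}=\mathbf{u}$ and (\ref{uniq2b}) with $W=\theta$, and use H\"older, the embedding $H^1(\Omega)\hookrightarrow L^4(\Omega)$, the generalized Poincar\'e inequality (\ref{poincare1}) and the a priori bound (\ref{eq:estutheta}) for $\hat{\mathbf{u}}_i,\hat{\theta}_i$ to deduce (\ref{hel3})--(\ref{hel4}), namely that $\|\hat{\mathbf{u}}\|_{H^1(\Omega)}$ and $\|\hat{\theta}\|_{H^1(\Omega)}$ are bounded by $C(\mathcal{H}_0+1)$, resp. $C(\mathcal{H}_1+1)$, times $\|\hat{\mathbf{g}}\|_{H^{1/2}(\Gamma_0^1)}+\|\hat{\phi}_1\|_{H^{1/2}(\Gamma_0\setminus\{x_3=0\})}+\|\hat{\phi}_2\|_{H^{1/2}(\{x_3=0\})}$, where $\mathcal{H}_0$ is made small by enlarging $Pr$ and shrinking $R,M$.

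Next I would extract the optimality information. Subtracting (\ref{sistopt2.1}) for the two solutions and testing with $\mathbf{h}_1=\hat{\mathbf{u}}$ gives (\ref{uniq3d}); subtracting (\ref{sistopt3.1}) and testing with $h_2=\hat{\theta}$ gives (\ref{uniq3f}); and testing the difference state equations (\ref{uniq1a})--(\ref{uniq1b}) with the multipliers $\boldsymbol{\lambda}_1,\lambda_2$ gives (\ref{uniq1aq})--(\ref{uniq1bq}). Adding these four relations, all the linear terms in $\boldsymbol{\lambda}_1,\lambda_2$ cancel, leaving the key identity (\ref{hel6}), whose left-hand side is $\gamma_1\|\text{rot}\,\hat{\mathbf{u}}\|_{L^2(\Omega)}^2+\gamma_2\|\hat{\mathbf{u}}\|_{L^2(\Omega)}^2+\gamma_3\|\hat{\theta}\|_{L^2(\Omega)}^2$ and whose right-hand side is the sum of the four trilinear terms $2c(\hat{\mathbf{u}},\hat{\mathbf{u}},\boldsymbol{\lambda}_1^2)$, $c(\hat{\mathbf{u}},\hat{\mathbf{u}},\boldsymbol{\lambda}_1)$, $c_1(\hat{\mathbf{u}},\hat{\theta},\lambda_2)$, $2c_1(\hat{\mathbf{u}},\hat{\theta},\lambda_2^2)$ together with boundary pairings of $\boldsymbol{\lambda}_3,\lambda_4,\lambda_2$ with $\hat{\mathbf{g}},\hat{\phi}_2,\hat{\phi}_1$. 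The boundary pairings are handled by monotonicity of the variational inequalities: adding (\ref{lagmul1b1}) at $i=1$ with test function $\hat{\mathbf{g}}_2$ and at $i=2$ with test function $\hat{\mathbf{g}}_1$ yields $\langle\boldsymbol{\lambda}_3,\hat{\mathbf{g}}|_{\Gamma_0^1}\rangle\le-\gamma_4\|\hat{\mathbf{g}}\|_{H^{1/2}(\Gamma_0^1)}^2$ as in (\ref{fn1}), and the same manipulation of (\ref{lagmul1b2})--(\ref{lagmul1b3}) gives the corresponding bounds for $\lambda_2$ and $\lambda_4$.

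The hard part is controlling the trilinear state--state--multiplier terms, and the decisive tool is Lemma \ref{est12}: $\|\boldsymbol{\lambda}_1\|_{H^1(\Omega)}^2+\|\lambda_2\|_{H^1(\Omega)}^2\le\beta_0^{-1}C_1\,\mathcal{M}[\hat{\mathbf{u}},\hat{\theta}]$, so each multiplier norm is small when $Pr$ is large, since $\mathcal{M}$ carries the factor $1/Pr$ in its leading terms and the $\gamma_i$ are nonnegative. Applying this estimate to $\hat{\mathbf{z}}_1$ and $\hat{\mathbf{z}}_2$ and inserting the control bounds (\ref{hel3})--(\ref{hel4}), each trilinear term is dominated by a constant multiple of $\max\{(\mathcal{H}_0+1)^2,(\mathcal{H}_0+1)(\mathcal{H}_1+1)\}\,\beta_0^{-1/2}(\mathcal{M}[\hat{\mathbf{u}}_1,\hat{\theta}_1]^{1/2}+\mathcal{M}[\hat{\mathbf{u}}_2,\hat{\theta}_2]^{1/2})$ times $\|\hat{\mathbf{g}}\|_{H^{1/2}(\Gamma_0^1)}^2+\|\hat{\phi}_1\|_{H^{1/2}(\Gamma_0\setminus\{x_3=0\})}^2+\|\hat{\phi}_2\|_{H^{1/2}(\{x_3=0\})}^2$, which is precisely (\ref{hel9})--(\ref{hel12}). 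Inserting these bounds together with (\ref{fn1}) and its analogues into (\ref{hel6}) yields (\ref{hel15}), with effective coefficient $\mathcal{I}-\min\{\gamma_4,\gamma_5,\gamma_6\}$ on the right. Finally, choosing $Pr$ large and $R,M$ small enough that $\mathcal{H}_0$ is small and $\mathcal{I}<\min\{\gamma_4,\gamma_5,\gamma_6\}$, the right-hand side of (\ref{hel15}) is nonpositive while the left-hand side is nonnegative, so both vanish; this forces $\hat{\mathbf{g}}=0$, $\hat{\phi}_1=0$, $\hat{\phi}_2=0$, and then (\ref{hel3})--(\ref{hel4}) give $\hat{\mathbf{u}}=0$, $\hat{\theta}=0$, hence $\hat{\mathbf{z}}_1=\hat{\mathbf{z}}_2$.
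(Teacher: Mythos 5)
Your proposal is correct and follows essentially the same route as the paper: difference of the two optimal states with a Hopf/Casas lifting to get the bounds corresponding to (\ref{hel3})--(\ref{hel4}), subtraction of the adjoint identities (\ref{sistopt2.1})--(\ref{sistopt3.1}) tested with the state differences, testing the difference state equations with the multiplier differences to produce the key identity (\ref{hel6}), the monotonicity trick on (\ref{lagmul1b1})--(\ref{lagmul1b3}), and the multiplier bound of Lemma \ref{est12} leading to (\ref{hel15}) and the smallness condition $\mathcal{I}<\min\{\gamma_4,\gamma_5,\gamma_6\}$. Your closing step (controls vanish first, then the state differences via (\ref{hel3})--(\ref{hel4})) is in fact slightly more careful than the paper's phrasing, so no changes are needed.
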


\end{document}